\documentclass[11pt,reqno,a4paper]{amsart}
\usepackage[a4paper,margin=1in]{geometry}
\usepackage[T1]{fontenc}
\usepackage[utf8]{inputenc}
\usepackage{lmodern}
\usepackage[final]{microtype}
\usepackage{amssymb}
\usepackage[dvipsnames]{xcolor}
\usepackage{enumitem}
\usepackage{amsthm}
\usepackage{dsfont}
\usepackage[colorlinks=true,linkcolor=blue,citecolor=OliveGreen,urlcolor=blue]{hyperref}
\hypersetup{
  pdftitle={An Lp-Variational Formula on Wiener Space},
  pdfauthor={David Criens, Carsten Hartmann, Michael Kupper},
  pdfsubject={Variational formula for Lp-norms of Wiener functionals},
  pdfkeywords={variational representation, Lp-norm, Wiener functional, stochastic control, stopping time, small-noise asymptotics, moment inequalities}
}
\usepackage[numbers]{natbib}
\numberwithin{equation}{section}

\makeatletter
\def\@settitle{\begin{center}%
        \baselineskip14\p@\relax
        \bfseries
        \uppercasenonmath\@title
        \@title
        \ifx\@subtitle\@empty\else
            \\[1ex]\uppercasenonmath\@subtitle
            \footnotesize\mdseries\@subtitle
        \fi
    \end{center}%
}
\def\subtitle#1{\gdef\@subtitle{#1}}
\def\@subtitle{}
\makeatother

\makeatletter
\def\@setaddresses{\par
  \nobreak \begingroup
\footnotesize
  \def\author##1{\nobreak\addvspace\bigskipamount}%
  \def\\{\unskip, \ignorespaces}%
  \interlinepenalty\@M
  \def\address##1##2{\begingroup
    \par\addvspace\bigskipamount\indent
    \@ifnotempty{##1}{(\ignorespaces##1\unskip) }%
    {\scshape\ignorespaces##2}\par\endgroup}%
  \def\curraddr##1##2{\begingroup
    \@ifnotempty{##2}{\nobreak\indent\curraddrname
      \@ifnotempty{##1}{, \ignorespaces##1\unskip}\/\space
      ##2\par}\endgroup}%
  \def\email##1##2{\begingroup
    \@ifnotempty{##2}{\nobreak\indent\emailaddrname
      \@ifnotempty{##1}{, \ignorespaces##1\unskip}\/\space
      \ttfamily##2\par}\endgroup}%
  \def\urladdr##1##2{\begingroup
    \def~{\char`\~}%
    \@ifnotempty{##2}{\nobreak\indent\urladdrname
      \@ifnotempty{##1}{, \ignorespaces##1\unskip}\/\space
      \ttfamily##2\par}\endgroup}%
  \addresses
  \endgroup
}
\makeatother

\newtheoremstyle{plainnoparen}%
{3pt plus 2pt minus 1pt}{3pt plus 2pt minus 1pt}{\itshape}{}{\bfseries}{.}{.5em}{\thmname{#1}\thmnumber{ #2}\thmnote{. #3}}

\theoremstyle{plainnoparen}
\newtheorem{theorem}{Theorem}[section]
\newtheorem{lemma}[theorem]{Lemma}

\newtheorem{corollary}[theorem]{Corollary}
\newtheorem{definition}[theorem]{Definition}
\theoremstyle{remark}
\newtheorem{remark}[theorem]{Remark}
\newtheorem{discussion}[theorem]{Discussion}

\newcommand{\1}{\mathds{1}}

\newcommand{\A}{\mathcal{A}}
\newcommand{\cF}{\mathcal{F}}
\newcommand{\cG}{\mathcal{G}}
\newcommand{\bR}{\mathbb{R}}
\renewcommand{\P}{\mathbb{P}}
\newcommand{\E}{\mathbb{E}}
\newcommand{\USA}{\textit{USA}}
\newcommand{\cE}{\mathcal{E}}
\newcommand{\Q}{\mathbb{Q}}
\newcommand{\cP}{\mathcal{P}}
\newcommand{\W}{\mathbb{W}}

\newcommand{\on}{\operatorname}

\newcommand{\Li}{\textit{Lip}_b (\Omega; \bR_+)}

\renewcommand{\epsilon}{\varepsilon}
\renewcommand{\rho}{\varrho}

\allowdisplaybreaks
\title[$L^p$-Variational Formula on Wiener Space]{An $L^p$-Variational Formula on Wiener Space}
\author[D. Criens]{David Criens}
\address{D. Criens - University of Freiburg, Ernst-Zermelo-Str. 1, 79104 Freiburg, Germany.}
\email{david.criens@stochastik.uni-freiburg.de}

\author[C. Hartmann]{Carsten Hartmann}
\address{C. Hartmann - Brandenburg University of Technology Cottbus--Senftenberg, Institute of Mathematics, Konrad-Wachsmann-Allee 1, 03046 Cottbus, Germany.}
\email{carsten.hartmann@b-tu.de}

\author[M. Kupper]{Michael Kupper}
\address{M. Kupper - University of Konstanz, 78457 Konstanz, Germany.}
\email{kupper@uni-konstanz.de}

\date{}

\begin{document}

\begin{abstract}
We establish a stochastic control representation for $L^p$-norms on
Wiener space. For every $p\ge1$ and every non-negative universally
measurable functional $\varphi$, we show that
\[
\|\varphi(W)\|_p
=
\sup_a
\E\Big[
e^{-\frac12\int_0^T\|a_t\|^2\,dt}
\varphi\Big(
W+\sqrt{p-1}\int_0^\cdot a_t\,dt
\Big)
\Big],
\]
where $W$ is a $d$-dimensional Brownian motion and the supremum is taken over
all progressively measurable controls $a$ satisfying
$\int_0^T\|a_t\|^2\,dt<\infty$ almost surely. This identity can be viewed as a multiplicative $L^p$-analogue of the Bou\'e--Dupuis variational formula. We also provide an extension to strong solutions of stochastic differential equations with
path-dependent coefficients.
Building on the variational formula, we discuss finite-\(p\) estimates for small-noise limits of stochastic differential equations with path-dependent coefficients. These include quantitative
Freidlin--Wentzell bounds, a functional Gaussian approximation and concentration estimates, and
moderate-deviation bounds. Moreover, we discuss consequences for functionals determined up to a stopping time, obtaining in particular probability bounds that identify the sharp exponential
decay rate after optimization over
controls.
The proof of the variational formula combines probabilistic arguments with convex duality methods, viscosity theory for Hamilton--Jacobi--Bellman equations, and a dynamic programming principle.

\medskip\noindent
\emph{Key words and phrases.} $L^p$-variational formula,
Bou\'e--Dupuis formula, Brownian functionals,
path-dependent stochastic differential equations, small-noise asymptotics, moderate deviations.

\medskip
\noindent\emph{MSC2020 subject classifications.} Primary 60F10, 60J65,
93E20; secondary 49L20, 60G40, 60H10.
\end{abstract}

\maketitle

\section{Introduction}
Variational formulas provide a useful link between probabilistic quantities and stochastic control problems, and have found applications in several areas of probability. A classical example is the Bou{\'e}--Dupuis variational formula, which gives a control representation for logarithmic exponential moments and is widely used in small-noise analysis, risk-sensitive control, and related exponential estimates. In this paper, we establish a variational formula of a different type, giving a direct stochastic control representation for \(L^p\)-norms of Brownian functionals.

Let $\Omega:=C([0,T];\bR^d)$ be the path space, let $X_t(\omega):=\omega (t)$ denote the coordinate process, and let $\W$ be the Wiener measure on $\Omega$.
 We denote by $\A$ the set of all $\bR^d$-valued processes that are progressively measurable with respect to the \(\W\)-completed right-continuous natural filtration of $X$ and satisfy $\int_0^T \|a_t\|^2\,dt<\infty$, $\W$-a.s.
 Our main result states that,
for every $p\ge1$ and every non-negative universally measurable
functional $\varphi\colon\Omega\to[0,\infty]$,
\begin{equation}\label{eq-CK-intro}
\E^\W\Big[\varphi(X)^p\Big]^{1/p}
=
\sup_{a\in\A}
\E^\W\Big[
e^{
-\frac12\int_0^T\|a_t\|^2\,dt
}
\varphi\Big(
X+\sqrt{p-1}\int_0^\cdot a_t\,dt
\Big)
\Big],
\end{equation}
where both sides may take the value $+\infty$.
For $p=1$, equation~\eqref{eq-CK-intro} is immediate, since the shift
vanishes, the exponential factor is at most one, and the zero control
$a\equiv0$ attains the supremum. 

The natural point of comparison is Borell's variational formula for
functions of a Gaussian vector \cite{Borell2000} and the corresponding
Wiener-space formula of Bou\'e and Dupuis
\cite{BoueDupuis1998}. In the latter formulation, every bounded
measurable functional $\psi$ satisfies
\[
\log\E^\W \big[\exp(\psi(X)) \big]
=
\sup_{a\in\A}
\E^\W \Big[
\psi\Big(X+\int_0^\cdot a_t\,dt\Big)
-\frac12\int_0^T\|a_t\|^2\,dt
\Big].
\]
If $\varphi$ is bounded and bounded away from zero, applying this
formula with $\psi=p\log\varphi$ and dividing by $p$ yields a representation for
$\log\|\varphi(X)\|_p$. Exponentiating the resulting identity does not
yield \eqref{eq-CK-intro}, because the exponential remains outside the
controlled expectation. Equation~\eqref{eq-CK-intro} instead represents the $L^p$-norm
directly as the supremum over all controls of the expectation of
the shifted functional multiplied by the exponential control factor.
An advantage of \eqref{eq-CK-intro} is that it remains meaningful when
$\varphi$ takes the value zero. To obtain the corresponding
Bou\'e-Dupuis representation, one applies that formula to
$p\log\varphi$, so zeros of $\varphi$ must be handled by approximation.
No such approximation is needed in \eqref{eq-CK-intro}, where
$\varphi$ itself appears. 
In particular, for every universally
measurable set $B\subseteq\Omega$, taking $\varphi=\1_B$ gives a
control representation of the powered probability $\W(B)^{1/p}$.

A second comparison comes from power duality, which represents the
$L^p$-norm over absolutely continuous probability measures
\cite[Proposition~2.5]{PMR_96}. For every $\Q\ll\W$ with finite relative
entropy, the coordinate process under
$\Q$ decomposes into a Brownian motion and a drift, and the expected
integral of its squared norm equals twice the relative entropy
\cite{Foellmer1985,Foellmer1986,L_13, LS_01}. This connects changes of measure
with drifts but does not directly yield \eqref{eq-CK-intro},
since the drift and its quadratic integral are evaluated under $\Q$,
whereas the shifted functional and the exponential cost factor in
\eqref{eq-CK-intro} are evaluated under $\W$. Further details are given
in Discussion~\ref{diss: main result}.

\subsubsection*{Proof sketch}
At an intuitive level, the proof of \eqref{eq-CK-intro} is split into two parts, dealing with the inequalities \(\leq\) and \(\geq\), respectively. The inequality \(\geq\) can be established directly from a local change of measure and an application of H\"older's inequality, see Lemma~\ref{lem: geq inequality in main}. The converse inequality \(\leq\) requires more preparation. Our proof strategy combines tools related to convex expectations on path space, viscosity theory for Hamilton--Jacobi--Bellman PDEs, and stochastic optimal control. 
To outline the basic idea, we consider both sides of \eqref{eq-CK-intro} as convex expectations on the path space \(\Omega\), i.e., as convex monotone functionals that preserve non-negative constants and satisfy certain continuity from above and below properties. Using their dual representation from \cite{BartlCheriditoKupper2019}, we show that convex expectations are ordered for all path-dependent functionals whenever their finite-dimensional distributions are ordered. This extends a comparison principle from~\cite{CriensKupper2025} to a framework of convex expectations that only preserve non-negative constants, which is tailor-made to treat settings with multiplicative penalization. As a consequence of our comparison result, the inequality \(\leq\) from \eqref{eq-CK-intro} holds for arbitrary non-negative \(\varphi\) once it holds for all functions of the type \(\varphi = f (X_{t_1}, \dots, X_{t_n})\) with \(\inf f > 0\). 
For these we can prove \eqref{eq-CK-intro} by PDE methods. Namely, we show that both sides are related to viscosity solutions to the same non-linear backward Hamilton--Jacobi--Bellman PDE that has a suitable uniqueness property to conclude \eqref{eq-CK-intro}. More detailed explanations of this strategy are given in Discussion~\ref{diss: main result} and Section~\ref{sec: pf VF}.

The variational formula \eqref{eq-CK-intro} extends to strong solutions of stochastic
differential equations with path-dependent coefficients. For $p>1$ and $a\in\A$, let $Y^{(p)}$ and $Z^{(p,a)}$ be the
solutions of
\begin{equation}\label{eq-intro-controlled-SDEs}
\begin{aligned}
dY_t^{(p)}
&=
\mu(t,Y^{(p)})\,dt
+\frac1{\sqrt{p-1}}\sigma(t,Y^{(p)})\,dX_t,\\
dZ_t^{(p,a)}
&=
\bigl(\mu(t,Z^{(p,a)})+\sigma(t,Z^{(p,a)})a_t\bigr)\,dt
+\frac1{\sqrt{p-1}}\sigma(t,Z^{(p,a)})\,dX_t,
\end{aligned}
\end{equation}
with common initial condition
$Y_0^{(p)}=Z_0^{(p,a)}=y_0$. Under nonanticipativity, linear-growth, and local Lipschitz conditions
specified in Subsection~\ref{sec-VF-SDE}, we obtain in
Theorem~\ref{theo-VF-SDE} that
\begin{equation}\label{eq-intro-sde-variational}
\E^\W\Big[\varphi(Y^{(p)})^p\Big]^{1/p}
=
\sup_{a\in\A}
\E^\W\Big[
e^{-\frac12\int_0^T\|a_t\|^2\,dt}
\varphi(Z^{(p,a)})
\Big].
\end{equation}
The normalization in \eqref{eq-intro-controlled-SDEs} links the moment
order $p$ to the size of the Brownian term. In both $Y^{(p)}$ and
$Z^{(p,a)}$, the diffusion coefficient is
$(p-1)^{-1/2}\sigma$, whereas the additional drift $\sigma a$ is
independent of $p$. 

Section~\ref{sec-applications} develops three
applications of these variational formulas, related to quantitative small-noise large and moderate deviation estimates. 
The quantitative comparisons in Theorem~\ref{thm-app-finite-p-comparison}
and Theorem~~\ref{thm-app-moderate-comparison} are further main results.
They give explicit finite-$p$ error bounds for deterministic and Gaussian
approximations, rather than only identifying asymptotic limits.
We now discuss these applications in more detail.

\subsubsection*{Small-noise approximations}
First, we use the stochastic control problem in \eqref{eq-intro-sde-variational} to establish a quantitative comparison with its deterministic
counterpart. For a deterministic control $b\in L^2([0,T];\bR^d)$, let
$z^{(b)}$ be a solution to 
\[
 dz_t^{(b)}=(\mu(t,z^{(b)})+\sigma(t,z^{(b)})b_t)\,dt, \quad z_0^{(b)}=y_0. 
\]
We define the associated rate function by
\[
I(\omega)
:=
\inf\Big\{
\frac12\int_0^T\|b_t\|^2\,dt
\mathrel{\colon}
b\in L^2([0,T];\bR^d),\ z^{(b)}=\omega
\Big\},
\]
and set $I(B):=\inf_{\omega\in B}I(\omega)$ for
$B\subseteq\Omega$. Thus, $I(\omega)$ is the minimal cost of a deterministic control satisfying $z^{(b)}=\omega$. Under
global Lipschitz assumptions on the coefficients,
Theorem~\ref{thm-app-finite-p-comparison} shows that $I(\, \cdot \,)$ is a
good rate function and that there is a constant $K\ge1$, independent
of $p$ and $\varphi$, such that, for every $p\ge2$ and every
$\varphi\colon\Omega\to[0,\infty)$ with concave modulus of
continuity $\varpi$,
\begin{equation}\label{eq-intro-finite-p-comparison}
\Big|
\E^\W\Big[\varphi(Y^{(p)})^p\Big]^{1/p}
-
\max_{\omega\in\Omega}
e^{-I(\omega)}\varphi(\omega)
\Big|
\le
\varpi\Big(\frac{K}{\sqrt{p-1}}\Big).
\end{equation}
The maximum is finite and attained. Its maximizing path balances the
value of $\varphi$ against the smallest cost required to produce that
path.

For $\varphi=e^\psi$, the deterministic value in
\eqref{eq-intro-finite-p-comparison} is
$\exp (\sup_{\omega\in\Omega}(\psi(\omega)-I(\omega)) )$, whose
exponent is the familiar value from the Laplace principle in the
theory of large deviations.
Classical small-noise theory identifies this limit for Markov
diffusions \cite{FreidlinWentzell2012}, with extensions to
path-dependent diffusions in
\cite{ChiariniFischer2014,MaRenTouziZhang2016}. Variational
weak-convergence methods for large deviations are developed in
\cite{BudhirajaDupuis2000,BudhirajaDupuis2019,DupuisEllis1997}, while
nonexponential versions of Sanov's and Schilder's theorems based on
convex duality appear in
\cite{BackhoffLackerTangpi2020,Lacker2020Sanov}. These asymptotic
results identify the limiting value, whereas
\eqref{eq-intro-finite-p-comparison} provides a quantitative comparison
at every finite $p$. For an $\alpha$-H\"older functional, its explicit
error is of order $(p-1)^{-\alpha/2}$. Lipschitz approximations of
indicators further yield two-sided finite-$p$ probability bounds in
terms of inner and outer neighborhoods. 

\subsubsection*{Moderate deviations}
We next study fluctuations around the deterministic zero-control path
$z^{(0)}$, which is related to the moderate deviation regime. The Brownian term in \eqref{eq-intro-controlled-SDEs} is
multiplied by $1/\sqrt{p-1}$, while $z^{(0)}$ solves the corresponding
equation without this term. This motivates rescaling
$Y^{(p)}-z^{(0)}$ by $\sqrt{p-1}$. Under the additional assumption that
the drift admits a linear expansion along $z^{(0)}$ with a quadratic
remainder, we denote its linear part by $A_t$. Linearizing the drift and
freezing the diffusion coefficient along $z^{(0)}$ then lead to
\[
\sqrt{p-1}\bigl(Y^{(p)}-z^{(0)}\bigr)
\approx G,
\qquad
G_t
=
\int_0^t A_sG\,ds
+
\int_0^t\sigma(s,z^{(0)})\,dX_s.
\]
Since the coefficients in this linear equation are deterministic, $G$
is a centered Gaussian process.

The corresponding deterministic equation is obtained in the same
way. If the deterministic path $z^{(0)}$ is perturbed by a small
control $\varepsilon b$, then its first-order displacement is
$v^{(b)}$, where
\[
v_t^{(b)}
=
\int_0^t A_sv^{(b)}\,ds
+
\int_0^t\sigma(s,z^{(0)})b_s\,ds,
\qquad
b\in L^2([0,T];\bR^d).
\]
More precisely,
$(z^{(\varepsilon b)}-z^{(0)})/\varepsilon$ converges to
$v^{(b)}$ as $\varepsilon\downarrow0$. Hence, $G$ describes the
first-order random fluctuation around $z^{(0)}$, while $v^{(b)}$
describes the first-order displacement produced by the control $b$.
As in the definition of $I(\, \cdot \,)$, let
$I_{\mathrm{lin}}(\eta)$ be the infimum of
$\frac12\int_0^T\|b_t\|^2\,dt$ over all controls $b$ satisfying
$v^{(b)}=\eta$.
In Theorem~\ref{thm-app-moderate-comparison}, we show that the expected
uniform distance, with the exponential factor from the variational
representation, is bounded by $Kh/\sqrt{p-1}$ uniformly over all
controls. Combining this estimate with the preceding comparison for the
linearized equation shows that, for every $p\ge2$,
$1\le h\le\sqrt{p-1}$, and every non-negative functional $\varphi$ with
concave modulus of continuity $\varpi$,
\begin{equation}\label{eq-intro-moderate-comparison}
\Big|
\E^\W\Big[
\varphi\Big(
\frac{\sqrt{p-1}}{h}
\bigl(Y^{(p)}-z^{(0)}\bigr)
\Big)^{1+h^2}
\Big]^{1/(1+h^2)}
-
\max_{\eta\in\Omega}
e^{-I_{\mathrm{lin}}(\eta)}\varphi(\eta)
\Big|
\le
\varpi\Big(
K\Big(
\frac1h+\frac h{\sqrt{p-1}}
\Big)
\Big).
\end{equation}
The term $1/h$ is the error in comparing the Gaussian linearized
equation with its deterministic control problem, while
$h/\sqrt{p-1}$ is the error in approximating the original controlled
equation by its linearization. The estimate between the controlled
paths also yields a direct Gaussian approximation: for
every $p\ge2$ and $2\le q\le p$,
\[
\E^\W\Big[
\Big\|
\sqrt{p-1}\bigl(Y^{(p)}-z^{(0)}\bigr)-G
\Big\|_T^q
\Big]^{1/q}
\le
\frac{K(q-1)}{\sqrt{p-1}}.
\]
For every fixed $q$, this is a quantitative functional central limit
theorem in the uniform topology. Moreover, Corollary~\ref{cor:finite-p-concentration} shows that, for every
$p\ge2$, $1\le u\le p-1$, and every $L$-Lipschitz functional
$\varphi\colon\Omega\to\bR$ with $L>0$,
\[
\W\Big(
\varphi(Y^{(p)})-\E^\W[\varphi(Y^{(p)})]
\ge
KL\Big(
\sqrt{\frac{u}{p-1}}+\frac{u}{p-1}
\Big)
\Big)
\le
2e^{-u},
\]
and the same estimate holds for the lower tail. The square-root term
comes from Gaussian concentration, while the second term comes from
the error between the diffusion and its Gaussian approximation.

If $h=h_p\to\infty$ and
$h_p/\sqrt{p-1}\to0$, both terms in
\eqref{eq-intro-moderate-comparison} vanish.
Corollary~\ref{cor-app-moderate-deviations} provides explicit pre-limit upper and
lower bounds for the probability that the
rescaled process belongs to a prescribed set. In this regime,
these bounds establish the moderate-deviation principle with speed
$h_p^2$ and rate function $I_{\mathrm{lin}}(\, \cdot \,)$. When
$h=\sqrt{p-1}$, the rescaled process is $Y^{(p)}-z^{(0)}$, but the
linearization error no longer vanishes. The nonlinear comparison
\eqref{eq-intro-finite-p-comparison} is then the relevant estimate.
 
Gaussian approximations for Markov diffusions with additive noise
are quantified in relative entropy on path space in
\cite{SanzAlonsoStuart2017}, whereas variational and pathwise
moderate-deviation results are developed in
\cite[Chapters~9-10]{BudhirajaDupuis2019} and
\cite{JacquierSpiliopoulos2020,SuoTaoZhang2018}. The comparison
\eqref{eq-intro-moderate-comparison} holds on path space for
path-dependent coefficients and every finite $p$ and $h$. It keeps
the Gaussian and linearization errors explicit, and both the
quantitative Gaussian approximation and the moderate-deviation
principle follow from the underlying controlled path estimates.

\subsubsection*{Extensions}
Finally, we turn to functionals determined by the path up to a stopping
time. 
For a raw stopping time \(\tau\) (e.g.,~a first hitting time of a closed set) and a non-negative functional
$\varphi\colon\Omega\to[0,\infty]$, measurable with respect to the \(\sigma\)-field
$\sigma(X_{t\wedge\tau},\,t\in[0,T])$, Theorem~\ref{thm-app-stopped-functional} shows,
for $p>1$,
\begin{equation}\label{eq-intro-stopped-variational}
\E^\W\Big[\varphi(Y^{(p)})^p\Big]^{1/p}
=
\sup_{a\in\A}
\E^\W\Big[
e^{
-\frac12
\int_0^{\tau(Z^{(p,a)})}\|a_t\|^2\,dt
}
\varphi(Z^{(p,a)})
\Big].
\end{equation}
Applied to indicators, the stopped representation yields bounds for the
probability that the controlled path belongs to $B \subseteq \Omega$ and that its control
cost up to the stopping time does not exceed a given level $R$. Combining these bounds
with the deterministic comparison
\eqref{eq-intro-finite-p-comparison} identifies the exact asymptotic
decay rate after optimization over the controls. More precisely, if
$B\in\sigma (X_{t \wedge \tau}, t \in [0, T])$ satisfies
$I(B^\circ)=I(B)=I(\overline B)<\infty$, then, for every $R\ge0$,
\begin{equation}\label{eq-intro-two-scales}
\begin{aligned}
&\sup_{a\in\A}
\W\Big(
Z^{(p,a)}\in B,\;
\frac12
\int_0^{\tau(Z^{(p,a)})}\|a_t\|^2\,dt
\le R
\Big)\\
&\quad\qquad=
\exp\Big(
-(p-1)
\Big[
\Big(\sqrt{I(B)}-\sqrt R\Big)_+^2+o(1)
\Big]
\Big)
\qquad\text{as }p\to\infty,
\end{aligned}
\end{equation}
where $x_+:=\max\{x,0\}$. Equation~\eqref{eq-intro-two-scales} shows
how the bound $R$ changes the asymptotic decay rate. For $R=0$, the rate
is $I(B)$. It decreases as $R$ increases and vanishes once
$R\ge I(B)$, in which case the optimized probability is no longer
exponentially small on the scale $p-1$.

Risk-sensitive control of exits for Markov diffusions is studied in
\cite{BoueDupuis2001,DupuisMcEneaney1997}, while a logarithmic
small-noise limit for a path-dependent exit problem appears in
\cite{CriensFuchs2026}. The stopped identity
\eqref{eq-intro-stopped-variational} is exact for every $p>1$ and
applies to any non-negative functional determined at a stopping time,
without requiring a Markov structure or a representation as an exit
from a state-space domain.

\medskip
The paper is organized as follows.
Section~\ref{sec-variational-formulas} states the variational formula
and its extension to path-dependent stochastic differential equations.
Section~\ref{sec-applications} develops the applications, and
Section~\ref{sec: pf VF} proves the main representation. The appendix
contains the dynamic programming principle used in the
finite-dimensional reduction.

\section{Variational formulas for Brownian functionals}
\label{sec-variational-formulas}

We first state the variational formula on canonical Wiener space and then extend it to strong solutions of path-dependent stochastic differential equations.  The proof of the main formula is given in Section~\ref{sec: pf VF}.

\subsection{Setup} \label{sec: setup}

Fix a finite time horizon $T>0$ and a dimension $d\in\mathbb N$.  Set $\Omega := C([0,T];\bR^d)$, endow this space with the uniform topology, and denote the corresponding Borel \(\sigma\)-field by $\cF := \mathcal B(\Omega)$. The coordinate process on \(\Omega\) is denoted by $X=(X_t)_{t\in[0,T]}$, i.e., $X_t(\omega)=\omega(t)$ for \(\omega \in \Omega\) and \(t \in [0, T]\). We denote the Wiener measure on \((\Omega, \cF)\) by $\mathbb{W}$. 
Let $(\cF_t^0)_{t\in[0,T]}$ be the raw natural filtration generated by
$X$, given by $\cF_t^0:=\sigma(X_s,\ s\in[0,t])$, and let
$(\cF_t)_{t\in[0,T]}$ be its right-continuous version, where
$\cF_t:=\bigcap_{r\in(t,T]}\cF_r^0$ for $t<T$ and $\cF_T:=\cF$.
Denote
by $\cF^\W$ and $(\cF_t^\W)_{t\in[0,T]}$ the respective
$\W$-completions of $\cF$ and $(\cF_t)_{t\in[0,T]}$. Unless another
filtered probability space is specified, we work on
\[
(\Omega,\cF^\W,(\cF_t^\W)_{t\in[0,T]},\W).
\]
Throughout, $\|\cdot\|$ denotes the Euclidean norm on $\bR^d$ and the
Hilbert-Schmidt norm on $\bR^{d\times d}$.
Let $\A$ denote the set of all $\bR^d$-valued progressively measurable
processes $a=(a_t)_{t\in[0,T]}$ which satisfy
$\int_0^T\|a_t\|^2\,dt<\infty$, $\W$-a.s.
For every $a\in\A$, the process
$t\mapsto\int_0^t a_s\,ds$ admits a continuous progressively measurable
version, which is used in the path shifts below.
Finally, for two Polish spaces $E$ and $F$, recall that a function
$\varphi\colon E\to F$ is universally measurable if it is
$\mathcal B^\mu(E)$-$\mathcal B(F)$-measurable for every probability
measure $\mu$ on $(E,\mathcal B(E))$, where $\mathcal B^\mu(E)$ denotes
the $\mu$-completion of $\mathcal B(E)$. Although not obvious from the definition, the composition of two universally measurable functions is again universally measurable, see \cite[Proposition~7.44]{bershre}. 

\subsection{A variational formula for Brownian functionals} \label{sec: VF}

The following theorem is the main result of the paper.  Its proof is postponed to Section~\ref{sec: pf VF}.

\begin{theorem}\label{theo-VF}
For every universally measurable function $\varphi\colon\Omega\to[0,\infty]$ and every $p\ge1$,
\begin{equation}\label{eq-main-VF}
\E^\W \big[ \varphi (X)^p \big]^{1/p}
=
\sup_{a\in\A}
\E^\W
\Big[ e^{-\frac{1}{2}\int_0^T\|a_s\|^2\,ds}\varphi\Big( X + \sqrt{p-1}\,\int_0^\cdot a_s \, ds \Big)
\Big].
\end{equation}
Both sides may take the value $+\infty$.
\end{theorem}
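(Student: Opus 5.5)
The plan is to prove the two inequalities separately. Write $S(\varphi)$ for the right-hand side of \eqref{eq-main-VF}. The inequality $S(\varphi)\le\|\varphi(X)\|_p$ will come from a Girsanov change of measure combined with H\"older's inequality. The reverse inequality will first be proved for a dense class of nice functionals, using an explicit optimal control for which H\"older holds with equality. It will then be extended to general $\varphi$ by an approximation that reuses the first inequality. The case $p=1$ is trivial, so assume $p>1$ and set $q=p/(p-1)$.

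\emph{Step 1 (upper bound for $S$).} Fix $a\in\A$, put $c=\sqrt{p-1}\,a$ and $Y=X+\int_0^\cdot c_t\,dt$. After localizing so that $\int_0^T\|a_t\|^2dt$ is bounded, Girsanov gives a measure $\Q$ with $d\Q/d\W=\mathcal E(-\int c\,dX)_T$ under which $Y$ is a Brownian motion. Rewriting the expectation under $\Q$ yields
\[
\E^\W\big[e^{-\frac12\int_0^T\|a_t\|^2dt}\varphi(Y)\big]=\E^\Q\big[\varphi(Y)\,L\big],\qquad L=\exp\Big(\sqrt{p-1}\int_0^T a_t\,dY_t-\tfrac p2\int_0^T\|a_t\|^2dt\Big).
\]
Since $q^2(p-1)/2=qp/2$, the power $L^q$ is the stochastic exponential of $q\sqrt{p-1}\int a\,dY$. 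It is therefore a $\Q$-supermartingale, so $\E^\Q[L^q]\le1$. H\"older's inequality then gives $\E^\Q[\varphi(Y)L]\le\E^\Q[\varphi(Y)^p]^{1/p}=\|\varphi(X)\|_p$. The localization is removed by monotone convergence, using $\varphi\ge0$. Hence $S(\varphi)\le\|\varphi(X)\|_p$.

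\emph{Step 2 (equality for smooth cylinder functionals).} Let $\varphi=f(X_{t_1},\dots,X_{t_n})$ with $f$ smooth, bounded and $\inf f>0$. Put $h=\varphi^p/\E^\W[\varphi^p]$ and $m_t=\E^\W[h\mid\cF_t]$. Via heat-semigroup functions between the grid points, $m_t=\mathcal E(\int_0^\cdot\beta(s,X)\,dX_s)_t$, where $\beta(s,\cdot)=\nabla\log$ of a smooth positive function of $X_s$ and the past grid values. This $\beta$ is bounded and Lipschitz in the current state. Equality in H\"older requires $L^q=m_T(Y)$, which dictates the feedback $a_t=\tfrac{\sqrt{p-1}}{p}\beta(t,Y)$. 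I would then solve $Y=X+\tfrac{p-1}{p}\int_0^\cdot\beta(t,Y)\,dt$ strongly, interval by interval between grid points, which is possible by the Lipschitz property. This makes $a$ progressively measurable with respect to the filtration of $X$, so $a\in\A$. Novikov applies because $\beta$ is bounded, $L^q$ is a true martingale, and H\"older holds with equality. This gives $S(\varphi)=\|\varphi(X)\|_p$ for this class. I expect the main obstacle here: checking the regularity of $\beta$ near the grid times, where it may blow up as $t\uparrow t_k$, and getting the strong solvability. The first thing I would try is to use $\inf f>0$ together with smoothness of $f$, which gives bounded gradients of $\log$ of the heat-semigroup functions uniformly in $t$.

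\emph{Step 3 (extension).} The functional $S$ is monotone, positively homogeneous and subadditive, because a supremum of a sum is at most the sum of the suprema. Moreover, for $\int_0^T\|a_t\|^2dt<\infty$ a.s., the law of $X+\sqrt{p-1}\int a\,dt$ is absolutely continuous with respect to $\W$. So $S(\varphi)$ only depends on the $\W$-class of $\varphi$, and a universally measurable $\varphi$ may be replaced by a Borel version. By monotone convergence on both sides it suffices to treat bounded Borel $\varphi$, via $\varphi\wedge N$ with $N\uparrow\infty$.

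Now choose smooth cylinder functions $\varphi_n$ as in Step 2, adding a small $\varepsilon_n>0$ to ensure $\inf\varphi_n>0$, with $\|\varphi_n-\varphi\|_p\to0$; such functions are dense in $L^p(\W)$. From $\varphi_n\le\varphi+|\varphi_n-\varphi|$, subadditivity, Step 1 applied to $|\varphi_n-\varphi|$ and Step 2 we get
\[
\|\varphi_n(X)\|_p=S(\varphi_n)\le S(\varphi)+\|\varphi_n-\varphi\|_p .
\]
Letting $n\to\infty$ gives $\|\varphi(X)\|_p\le S(\varphi)$. Together with Step 1 this proves \eqref{eq-main-VF}, including the cases where both sides are $+\infty$.
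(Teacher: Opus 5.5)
Your proof is correct, and for the hard inequality it takes a genuinely different route from the paper. Your Step~1 is the paper's Lemma~\ref{lem: geq inequality in main}.

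For the converse, the paper proceeds in four stages:
\begin{enumerate}
\item[(1)] it treats both sides as positive-constant-preserving convex expectations;
\item[(2)] it uses their dual representation and a comparison theorem, proved with Fan's minimax theorem, to reduce to cylinder functionals with Lipschitz $f$ and $\inf f>0$;
\item[(3)] it uses the Markov property and the dynamic programming principle of the appendix to reduce further to single-time functionals $f(X_t)$;
\item[(4)] it shows that $u^{1/p}$ and the control value function are both viscosity solutions of the same HJB equation, with uniqueness inherited from the heat equation.
\end{enumerate}

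You instead give a verification argument. For smooth cylinder functionals you exhibit the optimizer, the feedback $a^*_t=\frac{\sqrt{p-1}}{p}\nabla_x\log g(t,Y_t)$ built from the heat-semigroup functions of $\varphi^p$. This is exactly the maximizer $\sqrt{p-1}\,D\log v$, $v=u^{1/p}$, of the paper's Hamiltonian, and it turns H\"older's inequality into an equality.

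Your extension step replaces the whole duality and comparison machinery by one observation. Step~1 makes $S$ $1$-Lipschitz on non-negative elements of $L^p(\W)$, since $|S(\varphi)-S(\psi)|\le S(|\varphi-\psi|)\le\|\varphi-\psi\|_p$. The paper uses this same estimate only to verify the continuity property (E5).

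What each route buys: yours is shorter and more elementary. It avoids viscosity theory, the dynamic programming principle and the minimax argument, and it shows the supremum is attained for smooth cylinder functionals. The paper's route yields a comparison principle for convex expectations and a DPP that are of independent interest and do not rely on the explicit H\"older-equality structure.

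A few details should be made explicit:
\begin{enumerate}
\item[(i)] Take $f$ with bounded first and second derivatives. Inductively, every intermediate heat-semigroup function $g$ then has bounded $\nabla g$ and $\nabla^2 g$. Together with $g\ge(\inf f)^p$, this makes $\beta$ bounded and Lipschitz uniformly up to the grid times, so the blow-up you feared does not occur.
\item[(ii)] Obtain $m_T(Y)=\mathcal{E}(\int\beta(s,Y)\,dY_s)_T$ under $\Q$ from It\^o's formula for $\log g(t,Y_t)$ on each grid interval. This works because $w=\log g$ solves $\partial_tw+\frac12\Delta w+\frac12\|\nabla w\|^2=0$, so no transfer of stochastic integrals is needed.
\item[(iii)] In Step~3, make the approximants non-negative before adding $\varepsilon_n$. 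For example, compose with a smooth $1$-Lipschitz $\chi_\varepsilon\ge\varepsilon/2$ satisfying $|\chi_\varepsilon(x)-x^+|\le\varepsilon$. Because $\varphi\ge0$, this costs at most $\varepsilon$ in $L^p$.
\item[(iv)] Absolute continuity of the shifted laws follows from Step~1 itself: apply it to $\1_N$ with $\W(N)=0$ and use that the discount factor is a.s.\ positive.
\end{enumerate}
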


\label{theo: VF}

In the following discussion we sketch the main ideas of our proof for Theorem~\ref{theo-VF}, relate it to the famous Bou\'e--Dupuis \cite{BoueDupuis1998} formula, and comment on alternative proof strategies. 

\begin{discussion} \label{diss: main result}
Our proof of Theorem~\ref{theo-VF} combines a fixed-control estimate
with convex duality arguments and PDE techniques. Localization,
Girsanov's theorem, and H\"older's inequality show that the control
value does not exceed the $L^p$-norm. The same estimate provides the
continuity from above needed for the subsequent duality argument.

For the converse inequality, we consider the left- and right-hand sides
of \eqref{eq-main-VF} as convex expectations over upper semianalytic
functions on $\Omega$. Recall that a function
$\varphi\colon\Omega\to\bR$ is upper semianalytic if
$\{\varphi\ge t\}$ is analytic for every $t\in\bR$. Every Borel
function is upper semianalytic. Furthermore, upper semianalytic
functions are universally measurable, see
\cite[Section~7.7]{bershre}.

The argument proceeds in three steps. First, we use convex duality to
reduce the remaining inequality to functionals of the form
$\varphi(X_{t_1},\ldots,X_{t_n})$, where $n\in\mathbb N$,
$0\le t_1<\cdots<t_n\le T$, and
$\varphi\in\textit{Lip}_b(\bR^{dn})$. 
In the second step, using the Markov property of Brownian motion,
dynamic programming, and properties of convex expectations, we reduce
this identity further to functionals of the form $\varphi(X_t)$, where
$t\in[0,T]$ and $\varphi\in\textit{Lip}_b(\bR^d)$ satisfies
$\inf\varphi>0$.
Consequently, the variational formula \eqref{eq-main-VF} holds for all
non-negative upper semianalytic $\varphi$ once
\begin{align}\label{eq: main to show}
\E^\W\Big[\varphi(X_t)^p\Big]^{1/p}
=
\sup_{a\in\A}
\E^\W\Big[
e^{-\frac12\int_0^T\|a_s\|^2\,ds}
\varphi\Big(
X_t+\sqrt{p-1}\int_0^t a_s\,ds
\Big)
\Big]
\end{align}
is established for all $t\in[0,T]$ and
$\varphi\in\textit{Lip}_b(\bR^d)$ with $\inf\varphi>0$.

To prove this, we use a PDE argument. Specifically, for
$\varphi\in\textit{Lip}_b(\bR^d)$ bounded away from zero and
$(t,x)\in[0,T]\times\bR^d$, set
\begin{align*}
u(t,x)
&:=
\E^\W\big[\varphi(x+X_{T-t})^p\big],\\
v(t,x)
&:=
\sup_{a\in\A}
\E^\W\Big[
e^{-\frac12\int_0^T\|a_s\|^2\,ds}
\varphi\Big(
x+X_{T-t}+\sqrt{p-1}\int_0^{T-t}a_s\,ds
\Big)
\Big].
\end{align*}
It is classical that $u$ solves the backward heat equation. Moreover,
by standard change of variable rules,
$\widetilde v:=u^{1/p}$ is a bounded viscosity solution of the
Hamilton--Jacobi--Bellman (HJB) equation
\[
-\frac{\partial\widetilde v}{\partial t}
-\sup_{a\in\bR^d}
\Big[
\sqrt{p-1}\langle D\widetilde v,a\rangle
-\frac12\widetilde v\|a\|^2
\Big]
-\frac12\on{tr}\big[D^2\widetilde v\big]
=
0.
\]
By dynamic programming, $v$ solves the same HJB equation with the same
terminal condition. Since uniqueness for this equation follows from
uniqueness for the backward heat equation, we conclude that
$\widetilde v=v$, which proves \eqref{eq: main to show} for every
$\varphi\in\textit{Lip}_b(\bR^d)$ bounded away from zero. Approximating
$\varphi$ by $\varphi+\delta$ and letting $\delta\downarrow0$ removes
this restriction. The backward recursion and the comparison principle
then yield the converse inequality for all non-negative bounded upper
semianalytic functionals. Together with the fixed-control estimate,
this proves \eqref{eq-main-VF}. Finally, Borel versions, absolute
continuity of the controlled shifted laws, and monotone convergence
extend the formula to all non-negative universally measurable
functionals.

  This strategy differs from the classical proof of the Bou\'e--Dupuis \cite{BoueDupuis1998} formula, which states, for bounded real-valued Borel \(\varphi\), that
	\begin{align} \label{eq: BD formula}
		\log \E^{\W} \big[ e^{ \varphi (X)} \big] = \sup_{a \in \A} \E^{\W} \Big[ \varphi \Big( X + \int_0^\cdot a_s \, ds \Big) - \frac{1}{2} \int_0^T \| a_s \|^2 \, ds \Big].
	\end{align}
	Its proof relies on the dual representation of the entropic risk measure, given by
	\begin{align} \label{eq: dual rep BD formula}
		\log \E^{\W} \big[ e^{\varphi (X)}\big] = \sup_{\substack{ \Q \, \in \, \mathcal{P}(\Omega) \\ \Q \ll \W }} \Big( \E^{\Q} \big[ \varphi (X)\big] - H (\Q \mid \W) \Big), 
	\end{align}
	where \(H (\Q \mid \W) := \E^{\Q} [ \log ( \frac{d\Q}{d\W})]\) denotes the relative entropy. On an intuitive level, the formulas \eqref{eq: BD formula} and
\eqref{eq: dual rep BD formula} are related by the fact that every
$\Q\ll\W$ with $H(\Q\mid\W)<\infty$ corresponds to a Brownian motion
with an absolutely continuous drift $\int_0^\cdot a_s\,ds$. In this
case, it follows from \cite[Theorem~2]{L_13} that
\[
H(\Q\mid\W)
=
\frac12
\E^\Q\Big[
\int_0^T\|a_s\|^2\,ds
\Big].
\]

The $L^p$-norm also admits a representation through changes of measure. Namely, \cite[Proposition~2.5]{PMR_96} shows that, for non-negative \(\varphi\) and \(1 \leq p < \infty\),
	\begin{align} \label{eq: variational norm}
		\E^{\W} \big[ \varphi (X)^p \big]^{1 / p} = \sup_{\substack{ \Q \, \in \, \mathcal{P}(\Omega) \\ \Q \ll \W }} \E^{\Q} \Big[ \varphi (X) \, \Big( \frac{d \Q}{d \W} \Big)^{- \frac{1}{p}} \Big].
	\end{align}
A related change of measure representation follows from backward
stochastic differential equations (BSDEs). For $p>1$ and a bounded
Borel functional $\varphi$ with $\inf\varphi>0$, the process
$V_t:=\E^\W[\varphi(X)^p\mid\cF_t^\W]^{1/p}$ solves a BSDE with
terminal value $\varphi(X)$ and generator
$(y,z)\mapsto(p-1)\|z\|^2/(2y)$ for $y>0$.
A general framework for BSDE dual representations with discounting
and changes of measure is developed in
\cite{DrapeauKupperRosazzaGianinTangpi2016}.
For this generator, the weak representation of the $L^p$-norm is
\begin{equation}\label{eq-bsde-discounted-representation}
\E^\W\Big[\varphi(X)^p\Big]^{1/p}
=
\sup_{\substack{q\in\A \\ q\text{ bounded}}}
\E^{\Q^q}\Big[
e^{-\frac1{2(p-1)}\int_0^T\|q_t\|^2\,dt}
\varphi(X)
\Big],
\end{equation}
where $\frac{d\Q^q}{d\W}
=\exp(\int_0^T q_t\,dX_t-\frac12\int_0^T\|q_t\|^2\,dt)$.
The strong formulation \eqref{eq-main-VF} requires additional arguments,
since the functional is evaluated at adapted shifts of the Brownian
path under the original Wiener measure.
For additive costs, weak representations under changes of measure are
obtained through BSDE methods in
\cite[Remark~3.5]{DelbaenHuRichou2011} and
\cite[Theorem~3.2]{DelbaenPengRosazzaGianin2010}.
For a class of deterministic convex cost functions,
\cite[Theorem~3.1]{BackhoffLackerTangpi2020} establishes the
corresponding strong representation for bounded Borel functionals.
An alternative proof of \eqref{eq-main-VF} could approximate
the Girsanov drifts by bounded elementary controls, following the proof
of \cite[Lemma~3.2]{BackhoffLackerTangpi2020}, for which the discounted
expectations in \eqref{eq-bsde-discounted-representation} can be
rewritten using adapted shifts. Passing to the limit would require
convergence of these expectations and an extension to all non-negative
universally measurable functionals. Here, we develop the
proof through a general comparison principle for convex expectations
on path space, which is of independent interest, and combine it with
the finite-dimensional PDE argument described above.
    
Lastly, we comment on possible direct PDE approaches to
\eqref{eq-main-VF}. As outlined above, our proof uses a PDE argument to
establish the formula for functionals of the form $\varphi(X_t)$. This
argument does not directly extend to time-integrated functionals. More
specifically, for
$\varphi=\int_0^T g(X_s)\,ds$, the right-hand side of
\eqref{eq-main-VF} takes the form
\[
\sup_{a\in\A}
\E^\W\Big[
e^{-\frac12\int_0^T\|a_s\|^2\,ds}
\int_0^T
g\Big(
X_s+\sqrt{p-1}\int_0^s a_r\,dr
\Big)ds
\Big].
\]
Here, the exponential factor multiplies the entire integral, rather
than discounting each contribution up to its integration time.
A direct PDE treatment would therefore have to account for the
accumulated integral as an additional state variable.
Our approach avoids this state enlargement by reducing the
identification to functionals of the form $\varphi(X_t)$.
\end{discussion}

Although Theorem~\ref{theo-VF} is stated on the canonical space, its value does not depend on this particular realization of Brownian motion.  The following formulation also allows additional randomness within the filtration.

\begin{corollary}\label{coro-arbitrary-setup}
Let $(\widetilde{\Omega},\widetilde{\cF},(\widetilde{\cF}_t)_{t \in [0, T]},\widetilde{\P})$ be a filtered probability space with complete filtration that supports a \(d\)-dimensional $(\widetilde\cF_t)_{t \in [0, T]}$-Brownian motion $\widetilde{W}$. Let $\widetilde{\A}$ be the set of all $\bR^d$-valued $(\widetilde{\cF}_t)_{t \in [0, T]}$-progressively measurable processes \((a_t)_{t \in [0, T]}\) satisfying  $\int_0^T\|a_s\|^2\,ds<\infty$, \(\widetilde{\P}\)-a.s. Then, for every non-negative universally measurable function $\varphi \colon \Omega \to \bR_+$ and every $p\ge1$,
\[
\E^{\widetilde{\P}} \Big[ \varphi \big(\widetilde{W}\big)^p \Big]^{1/p}
=
\sup_{a\in\widetilde\A}
\E^{\widetilde\P}\Big[
e^{-\frac{1}{2}\int_0^T\|a_s\|^2\,ds}
\varphi\Big(\widetilde W+\sqrt{p-1}\int_0^\cdot a_s\,ds\Big)
\Big].
\]
\end{corollary}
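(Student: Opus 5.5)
We prove the two inequalities separately, using Theorem~\ref{theo-VF} as the canonical benchmark.

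\textbf{Inequality $\geq$.} Fix $a\in\widetilde\A$ and localize with the stopping times $\tau_n:=\inf\{t:\int_0^t\|a_s\|^2\,ds\ge n\}\wedge T$. Set $a^n:=a\1_{[0,\tau_n]}$ and define $\Q^n$ by
\[
\frac{d\Q^n}{d\widetilde\P}=\exp\Big(-\sqrt{p-1}\int_0^T a^n_s\,d\widetilde W_s-\frac{p-1}{2}\int_0^T\|a^n_s\|^2\,ds\Big).
\]
Novikov's condition holds for this density. By Girsanov, $\widetilde W+\sqrt{p-1}\int_0^\cdot a^n_s\,ds$ is a $\Q^n$-Brownian motion. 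Rewriting the controlled expectation under $\Q^n$ and applying H\"older with exponents $p$ and $p/(p-1)$ gives
\[
\E^{\widetilde\P}\Big[e^{-\frac12\int_0^T\|a^n_s\|^2\,ds}\varphi\Big(\widetilde W+\sqrt{p-1}\int_0^\cdot a^n_s\,ds\Big)\Big]\le \|\varphi(\widetilde W)\|_p .
\]
The key computation is that the remaining factor $\exp(\cdots)$, raised to the power $p/(p-1)$, integrates to at most $1$ under $\Q^n$, being an exponential supermartingale. This is the same argument as Lemma~\ref{lem: geq inequality in main}.

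Universal measurability is the only delicate point in this step. The laws of the shifted paths are absolutely continuous with respect to Wiener measure, so $\varphi$ may be replaced by a Borel version that agrees with it off a $\W$-null set. We then pass $n\to\infty$ by Fatou's lemma, using $\tau_n\uparrow T$ a.s., so that the shifted paths converge uniformly. Fatou only covers lower semicontinuous $\varphi$, however. For general $\varphi$ we instead note that on $\{\tau_n=T\}$ the shifted path for $a^n$ coincides with the one for $a$. Applying H\"older directly to the original $a$ on these events and using monotone convergence avoids any continuity requirement.

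\textbf{Inequality $\leq$.} The left side depends only on the law $\W$ of $\widetilde W$, so by Theorem~\ref{theo-VF} it equals the canonical supremum over $\A$. Given a canonical control $a\in\A$, choose a progressively measurable functional $\alpha$ on $[0,T]\times\Omega$ with $a_t=\alpha(t,X)$, $dt\otimes\W$-a.e. Then $\tilde a_t:=\alpha(t,\widetilde W)$ belongs to $\widetilde\A$, since $\widetilde W$ is adapted to the filtration $(\widetilde\cF_t)$. The pair $(\widetilde W,\tilde a)$ has the same joint law as $(X,a)$, so the two controlled expectations coincide. Taking the supremum over $a\in\A$ gives $\le$.

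The technical obstacle in this step is choosing $\alpha$ measurable with respect to the raw or right-continuous filtration, rather than its $\W$-completion. We handle it with the standard fact that every $\W$-completed progressive process has an $(\cF_t)$-predictable version up to $dt\otimes\W$-null sets. Because $\widetilde W$ has law $\W$, modifying $\alpha$ on such a null set changes $\tilde a$ only on a $dt\otimes\widetilde\P$-null set, which does not affect the integral or the shift.

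Combining the two inequalities yields the corollary.
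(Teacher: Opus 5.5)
Your proposal is correct and follows essentially the same route as the paper. The inequality $\geq$ is exactly the localized Girsanov--H\"older argument of Lemma~\ref{lem: geq inequality in main}, including passing to the limit by monotone convergence over $\{\tau_n = T\}$ rather than by Fatou. The inequality $\leq$ combines Theorem~\ref{theo-VF} with transporting canonical controls to $\widetilde{W}$ through a version that is measurable for the uncompleted filtration.

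The only cosmetic difference is in that version: the paper takes an $(\cF^0_t)_{t\in[0,T]}$-progressive version from \cite[Lemma~2.1]{STZ_11}, whereas you use an $(\cF_t)$-predictable one. Since $(\widetilde{\cF}_t)_{t \in [0, T]}$ is not assumed right-continuous, the version must be adapted to the raw filtration. Your predictable version satisfies this after redefining it at $t=0$, which is a $dt$-null change.
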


\begin{proof}
By virtue of \cite[Lemma~2.1]{STZ_11}, for each \(a \in \mathcal{A}\), there exists an \((\cF^0_t)_{t \in [0, T]}\)-progressively measurable control process \(a'\) such that \(a = a'\)
 $dt\otimes\W$-almost everywhere. 
 Using this notation and the fact that \(a' (\widetilde{W})\in \widetilde{\mathcal{A}}\) for every \(a \in \mathcal{A}\), we obtain that 
 \begin{align*}
     \E^{\widetilde{\P}} \Big[ \varphi \big(\widetilde{W}\big)^p \Big]^{1/p} &= \E^\W \Big[ \varphi (X)^p \Big]^{1/p}
     \\&= \sup_{a \in \mathcal{A}} \E^\W \Big[ e^{- \frac{1}{2} \int_0^T \| a'_s \|^2 \, ds} \varphi \Big( X + \sqrt{p - 1} \int_0^\cdot a'_s \, ds \Big) \Big] 
     \\&= \sup_{a \in \mathcal{A}} \E^{\widetilde{\P}} \Big[ e^{- \frac{1}{2} \int_0^T \| a'_s (\widetilde{W}) \|^2 \, ds} \varphi \Big( \widetilde{W} + \sqrt{p - 1} \int_0^\cdot a'_s (\widetilde{W}) \, ds \Big) \Big]
     \\&\leq \sup_{a\in\widetilde\A}
\E^{\widetilde\P}\Big[
e^{-\frac{1}{2}\int_0^T\|a_s\|^2\,ds}
\varphi\Big(\widetilde W+\sqrt{p-1}\int_0^\cdot a_s\,ds\Big)
\Big].
 \end{align*}
 The converse inequality \(\geq\) is a consequence of Lemma~\ref{lem: geq inequality in main} below. 
\end{proof}

\subsection{Extension to path-dependent stochastic differential equations}
\label{sec-VF-SDE}

We now extend Theorem~\ref{theo-VF} to functionals of strong solutions of path-dependent stochastic differential equations.  Let $\mu\colon[0,T]\times\Omega\to\bR^d$ and $\sigma\colon[0,T]\times\Omega\to\bR^{d\times d}$ be two coefficients, and fix an initial value~$y_0\in\bR^d$.  For $\omega\in\Omega$ and \(t \in [0, T]\), set $\|\omega\|_t := \sup_{0\le s\le t}\|\omega(s)\|$ and $\omega_{\cdot\wedge t}(s) :=\omega(s\wedge t)$ for \(s \in [0, T]\).  We make the following assumptions.
\begin{enumerate}[label=\textup{(A\arabic*)}]
\item The coefficients are Borel measurable and nonanticipative in the sense that, for every $t\in[0,T]$ and $\omega\in\Omega$,
\[
\mu(t,\omega)=\mu(t,\omega_{\cdot\wedge t}),
\qquad
\sigma(t,\omega)=\sigma(t,\omega_{\cdot\wedge t}).
\]
\label{A1}
\item There is a constant $C>0$ such that
\[
\|\mu(t,\omega)\|+\|\sigma(t,\omega)\|
\le
C(1+\|\omega\|_t)
\]
for every $t\in[0,T]$ and $\omega\in\Omega$.
\label{A2}
\item For every $N>0$, there is a constant $L_N>0$ such that
\[
\|\mu(t,\omega)-\mu(t,\eta)\|+\|\sigma(t,\omega)-\sigma(t,\eta)\|
\le L_N\|\omega-\eta\|_t
\]
for every $t\in[0,T]$ and all $\omega,\eta\in\Omega$ satisfying $\|\omega\|_t,\|\eta\|_t\le N$.
\label{A3}
\end{enumerate}
For $p>1$, let $Y^{(p)}$ be the (up to indistinguishability unique) solution process to the stochastic differential equation (SDE)
\begin{equation}\label{eq: SDE}
dY_t^{(p)}=\mu(t,Y^{(p)})\,dt+ \frac{1}{\sqrt{p - 1}} \sigma(t,Y^{(p)})\,dX_t,
\qquad Y_0^{(p)}=y_0,
\end{equation}
and, for $a\in\A$, let \(Z^{(p, a)}\) be the (up to indistinguishability unique) solution process to the controlled SDE
\begin{equation}\label{eq-app-controlled-small-noise-sde}
dZ_t^{(p,a)}
=\big(\mu(t,Z^{(p,a)})+ \sigma(t,Z^{(p,a)})a_t\big)\,dt
+ \frac{1}{\sqrt{p - 1}} \sigma(t,Z^{(p,a)})\,dX_t,
\qquad Z_0^{(p,a)}=y_0.
\end{equation}
At this stage, recall that \(X\) is a Brownian motion and that the claimed existence and uniqueness properties of the above equations are entailed by the Lipschitz assumptions on the coefficients, see \cite[Theorem~14.30]{jacod79} for a suitable statement.

We now obtain the following variational formula for functionals of solutions to stochastic differential equations with path-dependent coefficients:

\begin{theorem}\label{theo-VF-SDE}
Under assumptions \ref{A1}--\ref{A3}, for every universally measurable function $\varphi\colon\Omega\to[0,\infty]$ and every $p > 1$,
\begin{equation}\label{eq-main-vf-sde}
\E^\W \Big[ \varphi \big(Y^{(p)}\big)^p \Big]^{1/p}
=
\sup_{a\in\A}\E^{\mathbb W}\Big[
e^{-\frac{1}{2}\int_0^T\|a_s\|^2\,ds}\varphi \big(Z^{(p,a)}\big)
\Big].
\end{equation}
\end{theorem}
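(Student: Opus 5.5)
Theorem~\ref{theo-VF-SDE} reduces to Theorem~\ref{theo-VF} by writing the strong solution as a measurable functional of the driving Brownian path. Under \ref{A1}--\ref{A3}, pathwise uniqueness and strong existence (\cite[Theorem~14.30]{jacod79}) give a map $F\colon\Omega\to\Omega$, measurable from the $\W$-completion of $\cF$ to $\cF$ (and nonanticipative), with $Y^{(p)}=F(X)$ $\W$-a.s. Composing $F$ with a universally measurable $\varphi$ raises a measurability issue. To avoid it, I first replace $F$ by a Borel version $F'$ with $F'=F$ $\W$-a.s. Then $\psi:=\varphi\circ F'$ is universally measurable, by the composition result \cite[Proposition~7.44]{bershre}. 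The left-hand side of \eqref{eq-main-vf-sde} is then $\E^\W[\psi(X)^p]^{1/p}$.

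Apply Theorem~\ref{theo-VF} to $\psi$. The right-hand side becomes $\sup_{a}\E^\W[e^{-\frac12\int_0^T\|a_s\|^2ds}\psi(X+\sqrt{p-1}\int_0^\cdot a_s\,ds)]$. The key step is to identify
\[
F'\Big(X+\sqrt{p-1}\int_0^\cdot a_s\,ds\Big)=Z^{(p,a)}\quad \W\text{-a.s.},
\]
and I would carry it out in three stages.
\begin{enumerate}
\item Localize. Let $\tau_n:=\inf\{t:\int_0^t\|a_s\|^2ds\ge n\}\wedge T$ and $a^n:=a\1_{[0,\tau_n]}$.
\item Change measure. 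The process $\widetilde X:=X+\sqrt{p-1}\int_0^\cdot a^n_s\,ds$ is a Brownian motion under an equivalent measure $\Q^n$ given by Girsanov (Novikov holds by boundedness of the energy). Since $\Q^n\sim\W$, the identity $Y^{(p)}=F'(X)$, which holds $\W$-a.s., transfers to the statement that $F'(\widetilde X)$ solves the SDE \eqref{eq: SDE} driven by $\widetilde X$ under $\Q^n$, hence also $\W$-a.s. Note that $\frac1{\sqrt{p-1}}\sigma\,d\widetilde X=\frac1{\sqrt{p-1}}\sigma\,dX+\sigma a^n\,dt$, which is exactly equation \eqref{eq-app-controlled-small-noise-sde} with control $a^n$.
\item Remove the localization. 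Pathwise uniqueness gives $F'(\widetilde X)=Z^{(p,a^n)}$. On $\{\tau_n=T\}$ both $a^n$ and $X+\sqrt{p-1}\int a^n$ coincide with the unlocalized objects, and uniqueness up to indistinguishability gives $Z^{(p,a^n)}=Z^{(p,a)}$ there. Since $\W(\tau_n=T)\uparrow1$, the identity holds $\W$-a.s.
\end{enumerate}

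The main obstacle is this transfer: a statement holding $\W$-a.s. must be applied along the shifted path, and the stochastic integral defining the solution is only defined almost surely. The cleanest route is to use that the law of $X+\sqrt{p-1}\int_0^\cdot a^n_s\,ds$ under $\W$ is absolutely continuous with respect to $\W$. This gives two things. First, $\{F\ne F'\}$ is negligible along the shift, so the choice of version does not matter. Second, the SDE identity for $F'(\omega)$, which holds for $\W$-a.e.\ $\omega$ if the stochastic integral is realized pathwise as a limit of Riemann sums along a subsequence, remains valid along the shifted path. The local Lipschitz condition \ref{A3} together with linear growth \ref{A2} guarantee the pathwise uniqueness needed at the end of stage 3. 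With this identity, the right-hand side from Theorem~\ref{theo-VF} becomes exactly the right-hand side of \eqref{eq-main-vf-sde}, which concludes the proof.
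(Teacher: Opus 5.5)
Your argument is correct and shares the paper's skeleton: a Yamada--Watanabe functional solution $F$, Theorem~\ref{theo-VF} applied to $\varphi\circ F$, and the identification $F\bigl(X+\sqrt{p-1}\int_0^\cdot a_s\,ds\bigr)=Z^{(p,a)}$ via absolute continuity of the shifted law and pathwise uniqueness. Where you differ is in how that identification is carried out.

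The paper handles an arbitrary $a\in\A$ in one step. It quotes \cite[Theorem~7.2]{LS_01} for $\W\circ(U^{(a)})^{-1}\ll\W$, passes to a predictable version $G$ of $F$, and transports the stochastic integral with Jacod's composition lemma \cite[Lemma~2.9]{jacod80}. It then needs \cite[Proposition~III.6.25]{JS}, because that lemma yields the integral in the filtration generated by $U^{(a)}$ rather than in $(\cF_t^\W)_{t\in[0,T]}$.

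You instead localize to energy at most $n$, so that Novikov and Girsanov give an equivalent measure $\Q^n$ under which the shifted path is a Brownian motion. The SDE identity then transfers by equality in law; the equivalence $\Q^n\sim\W$ is only what brings the conclusion back to $\W$. A pathwise realization of the integral makes the filtration question disappear. This is more elementary on the absolute-continuity side, at the cost of two details you should make explicit.

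\emph{Regularizing the integrand.} Since $\sigma$ is only Borel in $t$, left-endpoint Riemann sums of $\sigma(s,F'(X))$ need not converge. You must first regularize the integrand in time, e.g.\ by $m\int_{(t-1/m)\vee 0}^t\sigma(s,F'(X))\,ds$, and take a diagonal subsequence. You also need to check that the joint law of the path and its approximants transfers; this is precisely what Jacod's lemma packages.

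\emph{Removing the localization.} This step uses a localized form of pathwise uniqueness: both $Z^{(p,a)}$ and $Z^{(p,a^n)}$ solve the same equation on $[0,\tau_n]$, hence agree there. The Gronwall argument under \ref{A3} gives this directly, but it is slightly more than uniqueness of global solutions up to indistinguishability.

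Finally, passing to a Borel version $F'$ is harmless but unnecessary, since \cite[Corollary~5.3.23]{KaratzasShreve1991} already yields a Borel $F$.
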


\begin{proof}
We fix \(p > 1\) and write \(Y \equiv Y^{(p)}\) and \(Z^{(a)} \equiv Z^{(p, a)}\) to simplify the notation.
	As the SDE \eqref{eq: SDE} satisfies weak existence and pathwise uniqueness, the Yamada--Watanabe theorem \cite[Corollary~5.3.23]{KaratzasShreve1991} provides the existence of a functional solution: there exists an \(\cF\)-\(\cF\)-measurable map \(F \colon \Omega \to \Omega\) that is also \(\cF^\W_t\)-\(\cF_t\)-measurable for every \(t \in [0, T]\), such that \(\W\)-a.s. \(Y = F (X)\), where we recall that the coordinate map \(X\) is a \(\W\)-Brownian motion. 
    By \cite[Lemma~I.2.17]{JS}, there also exists an \((\cF_t)_{t \in [0, T]}\)-predictable process \(G = (G_t)_{t \in [0, T]}\) on \((\Omega, \cF)\) such that \(\W (F_t = G_t \text{ for all } t \in [0, T]) = 1\).
	In particular, \(\W\)-a.s.
	\begin{align} \label{eq: strong solution}
		G_t (X) = y_0 + \int_0^t \mu (s, G (X)) \, ds + \int_0^t \frac{1}{\sqrt{p - 1}}\sigma (s, G (X)) \, d X_s, \quad t \in [0, T], 
	\end{align}
	where the stochastic integral is meant to be computed for the pair \(((\cF_t)_{t \in [0, T]}, \W)\); cf. \cite[Lemma~2.7]{jacod80}.
	Now, for every non-negative universally measurable \(\varphi \colon \Omega \to [0, \infty]\), applying Theorem~\ref{theo-VF} to the function \(\varphi \circ F\) (that is universally measurable by \cite[Proposition~7.44]{bershre}), we obtain that 
	\begin{align*}
		\E^{\W} \big[ \varphi (Y)^p \big]^{1 / p} = \E^{\W} \big[ \varphi (F (X))^p \big]^{1 / p} 
		= \sup_{a \in \A} \E^\W \Big[ e^{ - \frac{1}{2} \int_0^T \|a_s\|^2 \, ds} \, \varphi ( F ( U^{(a)})) \Big], 
	\end{align*}
	where
	\[
	U^{(a)} := X + \sqrt{p - 1} \, \int_0^\cdot a_s \, ds.
	\] 
	Thanks to \cite[Theorem~7.2]{LS_01}, we have \(\W \circ (U^{(a)})^{-1} \ll \W\), which implies that \eqref{eq: strong solution} as well as \(F = G\) hold also \(\W \circ (U^{(a)})^{-1}\)-almost surely. 
	Moreover, by \cite[Lemma~2.9]{jacod80}, \(\W\)-a.s. 
	\begin{align} \label{eq: change 1}
		\Big( \int_0^\cdot \frac{1}{\sqrt{p - 1}} \sigma (s, G (X)) \, d X_s \Big) \circ U^{(a)} 
		&= \int_0^\cdot \frac{1}{\sqrt{p - 1}}\sigma (s, G (U^{(a)})) \, d U^{(a)}_s,  
	\end{align} 
	where the latter stochastic integral is computed under $\W$ with
respect to the right-continuous natural filtration
$(\cG_t)_{t\in[0,T]}$ generated by $U^{(a)}$, given by
$\cG_t:=\bigcap_{s\in(t,T]}\sigma(U_r^{(a)},\,r\in[0,s])$ for $t<T$
and $\cG_T:=\sigma(U_r^{(a)},\,r\in[0,T])$.
    By virtue of \cite[Proposition~III.6.25]{JS}, the integral process
    \(
    \int_0^{\cdot} \, (p - 1)^{- 1/2}\sigma (s, G (U^{(a)})) \, d U^{(a)}_s
    \)
    remains \(\W\)-a.s. the same process when computed for the pair \(((\cF^\W_t)_{t \in [0, T]}, \W)\) instead of \(((\cG_t)_{t \in [0, T]}, \W)\). Hence, we further obtain that \(\W\)-a.s. 
    \begin{align} \label{eq: change 2}
    \int_0^\cdot \frac{1}{\sqrt{p - 1}} \sigma (s, G (U^{(a)})) \, d U^{(a)}_s 
		= \int_0^\cdot \sigma (s, G (U^{(a)})) \, a_s \, ds + \int_0^\cdot \frac{1}{\sqrt{p - 1}} \sigma (s, G (U^{(a)})) \, d X_s.
    \end{align} 
    Using \eqref{eq: change 1}, \eqref{eq: change 2}, and the fact that \eqref{eq: strong solution} holds \(\W \circ (U^{(a)})^{-1}\)-a.s., we conclude that \(\W\)-a.s. 
	\begin{align*}
		d G_t (U^{(a)}) &= \big( \mu (t, G (U^{(a)})) + \sigma (t, G (U^{(a)})) \, a_t \big) \, dt + \frac{1}{\sqrt{p - 1}}\sigma (t, G (U^{(a)})) \, d X_t, 
		\\
		G_0 (U^{(a)}) &= y_0.
	\end{align*} 
	As \(Z^{(a)}\) solves the same SDE on the same filtered space with the same driving Brownian motion, the pathwise uniqueness of this SDE entailed through \cite[Theorem~14.30]{jacod79} yields that \(\W\)-a.s. \(F (U^{(a)}) = G (U^{(a)}) = Z^{(a)}\). The claim of the theorem follows. 
\end{proof}

\section{Small-noise and stopping applications of the variational formula}
\label{sec-applications}
In this section, we derive three consequences of the variational formulas we established in Section~\ref{sec: VF}.
We first obtain a quantitative comparison with deterministic control,
then derive quantitative Gaussian approximations and moderate-deviation
limits, and finally establish a stopped variational formula and exact
asymptotic decay rates for events determined up to a stopping time.

Throughout this section, let $Y^{(p)}$ and $Z^{(p,a)}$ denote the solution processes of the stochastic differential equations \eqref{eq: SDE} and \eqref{eq-app-controlled-small-noise-sde},
respectively. All equalities and inequalities between random variables are understood in the $\W$-a.s.~sense.
 We write $x_+:=\max\{x,0\}$, and use the conventions
$\inf\emptyset:=\infty$, $e^{-\infty}:=0$, and $-\log 0:=\infty$.

\subsection{Comparison with deterministic control at finite \texorpdfstring{$p$}{p}}
\label{subsec-app-deterministic-reduction}
	The classical Freidlin--Wentzell theory of large deviations characterizes the exponential decay of small-noise probabilities in terms of a deterministic control problem
	\cite{FreidlinWentzell2012}. A particularly useful framework is the weak convergence approach of Dupuis and Ellis \cite{DupuisEllis1997}. Its central idea is to establish the Laplace principle by combining variational representations with a probabilistic analysis of the limiting behavior of the associated controlled processes, see, for example,
	\cite{BoueDupuis1998, BudhirajaDupuis2019, ChiariniFischer2014}.
	
	In this subsection, we apply this approach to the small-noise asymptotics of the SDE \eqref{eq: SDE}, using the variational formula from Theorem~\ref{theo-VF-SDE}. Rather than passing directly to the asymptotic limit, however, we derive explicit quantitative error estimates. 
 For this quantitative result, we assume that both coefficients \(\mu\) and \(\sigma\) are
globally Lipschitz in the path variable as described in the following conditions.
\begin{enumerate}[label=\textup{(B\arabic*)}]
\item The coefficients $\mu$ and $\sigma$ are Borel measurable and
nonanticipative in the path variable. \label{B1}
\item There is a constant $C>0$ such that, for every $t\in[0,T]$ and
every $\omega,\eta\in\Omega$,
\[
\|\mu(t,\omega)\|+\|\sigma(t,\omega)\|
\le C(1+\|\omega\|_t),
\]
\[
\|\mu(t,\omega)-\mu(t,\eta)\|
+\|\sigma(t,\omega)-\sigma(t,\eta)\|
\le C\|\omega-\eta\|_t.
\]
\label{B2}
\end{enumerate}
Conditions \ref{B1}--\ref{B2} imply \ref{A1}--\ref{A3}. The uniform Lipschitz constant is used to obtain an estimate between the stochastic and deterministic equations.

For a deterministic control
$b\in L^2([0,T];\bR^d)$, let $z^{(b)}$ be the solution of
\[
z_t^{(b)}
=
y_0+\int_0^t\mu(s,z^{(b)})\,ds
+\int_0^t\sigma(s,z^{(b)})b_s\,ds.
\]
For $a\in\A$, let $z^{(a)}$ denote the unique continuous adapted solution
of the same equation with $b$ replaced by $a$. By \ref{B1}--\ref{B2} and
the definition of $\A$, this solution is well-defined and satisfies
$z^{(a)}(\omega)=z^{(a(\cdot , \, \omega))}$ for $\W$-a.a.
$\omega\in\Omega$.

The deterministic controlled equation induces the rate function
$I\colon\Omega\to[0,\infty]$ given by
\[
I(\omega)
:=
\inf\Big\{
\frac12\int_0^T\|b_t\|^2\,dt
\colon
b\in L^2([0,T];\bR^d),\ z^{(b)}=\omega
\Big\}.
\]
Recall that $I(\, \cdot\,)$ is called a good rate function if the sublevel set $\{\omega\in\Omega\mathrel{\colon}I(\omega)\le r\}$ is compact for each $r\ge0$.

An increasing concave function $\varpi\colon[0,\infty)\to[0,\infty)$ is called a
concave modulus of continuity if $\varpi(0)=0$ and $\varpi(u)\to0$ as $u\downarrow 0$.
We say that a function $\varphi\colon\Omega\to \bR$ has concave modulus (of continuity) $\varpi$
if
\begin{align} \label{eq: concave modulus}
|\varphi(\omega)-\varphi(\eta)|
\le
\varpi(\|\omega-\eta\|_T),
\qquad
\text{for all }\omega,\eta\in\Omega.
\end{align} 
Every uniformly continuous function \(\varphi \colon \Omega \to \bR\) admits a concave modulus. In fact, the set of functions from \(\Omega \to \bR\) that has some concave modulus is precisely the class of uniformly continuous functions from \(\Omega \to \bR\). We refer to \cite[Fact~9, Lemma~12]{Joh_24} for a discussion.

The following theorem compares the stochastic and deterministic values at
every finite $p$, with an error depending on $\varphi$ only through its
modulus $\varpi$. 
\begin{theorem}\label{thm-app-finite-p-comparison}
Suppose that \ref{B1}--\ref{B2} hold. Then, $I(\, \cdot\,)$ is a good rate function.
Moreover, there is a constant $K\ge1$, depending only on $C$, $T$,
and $\|y_0\|$, such that the following hold.

\noindent
(a) For every $p\ge2$, $R\ge0$, and $a\in\A$ such that \(\W\)-a.s. 
$\int_0^T\|a_t\|^2\,dt\le R$, 
\begin{equation}\label{eq-app-controlled-deterministic-error}
\E^\W\Big[
\|Z^{(p,a)}-z^{(a)}\|_T^2
\Big]^{1/2}
\le
\frac{K e^{K\sqrt R}}{\sqrt{p-1}}.
\end{equation}
(b) If $p\ge2$ and $\varphi\colon\Omega\to[0,\infty)$ has concave modulus
$\varpi$, then $e^{-I}\varphi$ attains a finite maximum, and
\begin{equation}\label{eq-app-modulus-comparison}
\Big|
\E^\W\Big[\varphi(Y^{(p)})^p\Big]^{1/p}
-
\max_{\omega\in\Omega}e^{-I(\omega)}\varphi(\omega)
\Big|
\leq
\varpi\Big(\frac{K}{\sqrt{p-1}}\Big).
\end{equation}
\end{theorem}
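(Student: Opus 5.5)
Goodness of $I$ comes first. For $r\ge0$, the controls $b$ with $\frac12\int_0^T\|b_t\|^2dt\le r$ form a weakly compact ball in $L^2([0,T];\bR^d)$. By \ref{B2} and Gronwall, the map $b\mapsto z^{(b)}$ is bounded on this ball, and the paths $z^{(b)}$ are equicontinuous, since $\|z^{(b)}_t-z^{(b)}_s\|\le C'(|t-s|+\sqrt{r|t-s|})$. The key step is that $b\mapsto z^{(b)}$ is continuous from the weak topology into the uniform topology. To see this, take $b_n\rightharpoonup b$. Arzel\`a--Ascoli gives a uniformly convergent subsequence $z^{(b_n)}\to z$. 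Then $\int_0^t\sigma(s,z^{(b_n)})b_{n,s}ds=\int_0^t\sigma(s,z)b_{n,s}ds+o(1)$, which converges to $\int_0^t\sigma(s,z)b_sds$ by weak convergence. Hence $z=z^{(b)}$. The sublevel set $\{I\le r\}$ is the image of the ball, because the infimum is attained by weak lower semicontinuity of the norm. It is therefore compact.

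For (a), set $\Delta=Z^{(p,a)}-z^{(a)}$. Then
\[
\Delta_t=\int_0^t(\mu(s,Z)-\mu(s,z))ds+\int_0^t(\sigma(s,Z)-\sigma(s,z))a_sds+\tfrac1{\sqrt{p-1}}\int_0^t\sigma(s,Z)dX_s.
\]
The drift part is bounded by $C\int_0^t\|\Delta\|_s(1+\|a_s\|)ds$. Doob/BDG with linear growth bounds the martingale part in $L^2$ by $(p-1)^{-1/2}C(1+\E\|Z\|_T^2)^{1/2}$. The moment $\E\|Z\|_T^2$ is itself bounded by $Ke^{K\sqrt R}$, using Gronwall together with $\int\|a\|\le\sqrt{TR}$. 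A pathwise Gronwall with the random but bounded weight $\int(1+\|a_s\|)ds\le T+\sqrt{TR}$, applied to $\|\Delta\|_t\le\int_0^tC(1+\|a_s\|)\|\Delta\|_sds+\sup_{s\le t}\|M_s\|$, gives $\|\Delta\|_T\le e^{C(T+\sqrt{TR})}\|M\|_T$. Taking $L^2$ norms gives the claimed bound.

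For (b), put $V=\max_\omega e^{-I}\varphi$. The maximum exists: $\varphi$ has at most linear growth, since $\varpi$ is concave. The sublevel sets of $I$ are compact, and $I(\omega)\gtrsim\|\omega\|_T^2$ for large $\|\omega\|_T$ by Gronwall, so $e^{-I}\varphi$ is upper semicontinuous and tends to $0$ at infinity.

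\emph{Lower bound.} Take a maximizer $\omega^*=z^{(b^*)}$ with $I(\omega^*)=\frac12\|b^*\|^2$, and use the deterministic control $a=b^*$ in Theorem~\ref{theo-VF-SDE}. This gives a value of at least $e^{-I(\omega^*)}(\varphi(\omega^*)-\E\varpi(\|Z-z^{(b^*)}\|_T))$. By Jensen and (a), this is at least $V-\varpi(Ke^{K\sqrt{2I(\omega^*)}}/\sqrt{p-1})$. This is not yet uniform in $I(\omega^*)$. I would fix it by splitting: if $e^{-I(\omega^*)}$ is small, the factor $e^{-I}$ in front of $\varpi$ absorbs the exponential via $e^{-I}\varpi(cx)\le \max(1,c)e^{-I}\varpi(x)$, using concavity ($\varpi(cx)\le c\varpi(x)$ for $c\ge1$). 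Since $e^{-I}e^{K\sqrt{2I}}$ is bounded, this yields $\varpi(K'/\sqrt{p-1})$.

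\emph{Upper bound.} This is the main obstacle, since arbitrary $a\in\A$ must be handled. For a general control, I would localize with $\tau_R=\inf\{t:\int_0^t\|a\|^2\ge R\}$ and replace $a$ by $a\1_{[0,\tau_R]}$, which only affects the event $\{\int\|a\|^2>R\}$, where the weight is at most $e^{-R/2}$. Pathwise, $\varphi(Z)\le\varphi(z^{(a)})+\varpi(\|Z-z^{(a)}\|_T)$ and $e^{-\frac12\int\|a\|^2}\varphi(z^{(a)})\le V$, since $I(z^{(a)})\le\frac12\int\|a\|^2$. It remains to bound $\E[e^{-\frac12\int\|a\|^2}\varpi(\|Z-z^{(a)}\|_T)]$ uniformly in $a$. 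I would slice by cost levels $k\le\frac12\int\|a\|^2<k+1$ and apply (a) with the stopped control on each slice, using the same concavity trick, $e^{-k}\varpi(Ke^{K\sqrt{2k+2}}x)\le Ke^{K\sqrt{2k+2}-k}\varpi(x)$. Summing over $k$ converges and gives $\varpi(K''/\sqrt{p-1})$. The delicate point is controlling growth of $\varphi$ on the exceptional set, which the linear growth of $\varphi$ together with moment bounds on $Z$ should handle.
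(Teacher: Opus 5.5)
The goodness argument for $I$ is fine. It uses Arzel\`a--Ascoli plus identification of the limit, where the paper uses a direct Gronwall estimate on $r_n$. Your pathwise Gronwall $\|\Delta\|_T\le e^{C(T+\sqrt{TR})}\|M\|_T$ in (a) is also correct.

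The genuine gap is the sentence claiming $\mathbb{E}\|Z\|_T^2\le Ke^{K\sqrt R}$ ``using Gronwall together with $\int\|a\|\le\sqrt{TR}$''. This is the real difficulty of (a), and the obvious Gronwall arguments do not deliver it:
\begin{itemize}
\item Cauchy--Schwarz on $\int_0^t(1+\|Z\|_s)\|a_s\|\,ds$ followed by an $L^2$-Gronwall gives only $e^{cR}$.
\item A pathwise Gronwall gives $1+\|Z\|_t\le(1+\|y_0\|+\|N\|_t)e^{C(T+\sqrt{TR})}$ with $N=(p-1)^{-1/2}\int_0^\cdot\sigma(s,Z)\,dX_s$. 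But $\mathbb{E}\|N\|_t^2$ is again controlled by $\int_0^t\mathbb{E}(1+\|Z\|_s)^2ds$. So the factor $e^{2C(T+\sqrt{TR})}$ becomes the rate of the next Gronwall, and you end up with $\exp\big(c\,e^{c\sqrt R}/(p-1)\big)$, which is doubly exponential in $\sqrt R$ at $p=2$. The same happens if you bound $\sigma(s,Z)$ by $\sigma(s,z^{(a)})$ plus $C\|\Delta\|_s$.
\end{itemize}
Neither bound has the form $Ke^{K\sqrt R}$ uniformly in $p\ge2$. The $\sqrt R$ is indispensable: your slicing sum $\sum_k e^{-k}\cdot(\text{rate at }R=2k+2)$ diverges for rates $e^{KR}$ with $K\ge1$.

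The missing idea is the paper's random time change. Set $A_t=\int_0^t(1+\|a_s\|)\,ds$ and $L_u=\inf\{s:A_s>u\}\wedge T$. Then $A_{L_t}-A_{L_s}\le t-s$, and $L_u=T$ for $u\ge T+\sqrt{TR}$. Hence It\^o's formula at time $L_t$, together with BDG and Young, turns the random Gronwall weight into Lebesgue measure on a deterministic horizon of length $T+\sqrt{TR}$. A single ordinary Gronwall then gives $e^{c\sqrt R}$. The paper runs this directly on $\|Z^{(p,a)}-z^{(a)}\|^2$. It bounds $\sigma(s,Z)$ via the deterministic estimate $\|z^{(a)}\|_T\le ce^{c\sqrt R}$, and uses the crude $e^{cR}$ moment bound only to know that the absorbed term $\frac12\mathbb{E}\|\Delta\|_{L_t}^2$ is finite. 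Running the same time change on $\|Z\|^2$ instead would also complete your route.

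Second, your concavity step runs in the wrong direction. From $\varpi(cx)\le c\varpi(x)$ for $c\ge1$ you only reach bounds $C'\varpi(K/\sqrt{p-1})$ with a constant $C'>1$ in front. In the lower bound $C'=\sup_I e^{-I+K\sqrt{2I}}$; in the upper bound it is $\sum_k Ke^{K\sqrt{2k+2}-k}$. But $C'\varpi(x)\le\varpi(K'x)$ fails for general concave moduli, e.g.\ $\varpi(u)=\min\{u,1\}$. The right tool is $q\varpi(y/q)\le\varpi(y)$ for $q\in(0,1]$, which pushes the small weight inside $\varpi$.

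In the upper bound, proceed in this order:
\begin{itemize}
\item First bound the weighted first moment $\mathbb{E}[e^{-\frac12\int_0^T\|a_s\|^2ds}\|Z^{(p,a)}-z^{(a)}\|_T]\le K/\sqrt{p-1}$ by slicing. This is linear in the distance, so summing constants is harmless.
\item Only then apply Jensen once under $q^{-1}e^{-\frac12\int_0^T\|a_s\|^2ds}\,d\mathbb{W}$.
\end{itemize}
Applying Jensen under $\mathbb{W}$ slice by slice, with $e^{-k}$ pulled out, leaves a factor such as $\sum_k e^{-k}>1$ outside $\varpi$. The paper follows the order above. Combined with the identification of $\max e^{-I}\varphi$ with $\sup_a\mathbb{E}[e^{-\frac12\int_0^T\|a_s\|^2ds}\varphi(z^{(a)})]$, it gives both directions at once via $|\sup-\sup|\le\sup|\cdot|$.

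Finally, $I(\omega)\gtrsim\|\omega\|_T^2$ is false for linear-growth $\sigma$. Gronwall gives only $\|z^{(b)}\|_T\le ce^{c\|b\|_{L^2}}$, i.e.\ $I\gtrsim(\log\|\omega\|_T)^2$. This still suffices for attainment of the maximum, because $\varphi$ grows at most linearly.
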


\begin{proof}
Throughout the proof, $c\ge 0$ denotes a finite constant that may change from
line to line and depends only on $C$, $T$, and $\|y_0\|$.

\smallskip
{\em Step 1.} We consider the deterministic controlled equation. For $R \geq 0$, we set
\[
B_R
:=
\Big\{
b\in L^2([0,T];\bR^d)
\colon
\int_0^T\|b_t\|^2\,dt\le R
\Big\},
\qquad
K_R:=\{z^{(b)}\ \colon b\in B_R\}.
\]
The space $B_R$ endowed with the weak topology of
$L^2([0,T];\bR^d)$ is compact and metrizable. We show that the solution map $b\mapsto z^{(b)}$ from $B_R$ to $\Omega$ is continuous. Indeed, let
$b^n\rightharpoonup b$ in $B_R$ and set
$r_n(t):=\int_0^t\sigma(s,z^{(b)})(b_s^n-b_s)\,ds$. The growth bound in \ref{B2} implies
that 
$\sigma(\, \cdot \,,z^{(b)})\in L^2([0,T];\bR^{d\times d})$, which shows that 
$r_n(t)\to 0$ for all $t\in[0,T]$. Since $b^n,b\in B_R$, we have $\|b^n-b\|_{L^2}\le 2\sqrt R$, and
H\"older's inequality implies
$\|r_n(t)-r_n(s)\|\le2\sqrt R
(\int_s^t\|\sigma(u,z^{(b)})\|^2\,du)^{1/2}$ for all $0 \leq s \leq t \leq T$.
The absolute continuity of the Lebesgue integral
implies that $(r_n)_{n\in\mathbb{N}}$ is equicontinuous,
from which we deduce that $\|r_n\|_T\to 0$. To compare the two solutions, we add and subtract
$\int_0^t \sigma(s,z^{(b)})b_s^n\,ds$ and obtain
\[
\begin{aligned}
z_t^{(b^n)}-z_t^{(b)}
&=r_n(t)
+\int_0^t\bigl(\mu(s,z^{(b^n)})-\mu(s,z^{(b)})\bigr)\,ds
+\int_0^t\bigl(\sigma(s,z^{(b^n)})-\sigma(s,z^{(b)})\bigr)b_s^n\,ds.
\end{aligned}
\]
Taking the supremum up to time $t\in[0,T]$ and applying \ref{B2}, we obtain
\[
\|z^{(b^n)}-z^{(b)}\|_t
\le
\|r_n\|_T
+c\int_0^t\|z^{(b^n)}-z^{(b)}\|_s(1+\|b_s^n\|)\,ds.
\]
By H\"older's inequality, we have
$\int_0^T(1+\|b_s^n\|)\,ds\le T+\sqrt{TR}$. Hence, it follows from Gronwall's lemma that  $\|z^{(b^n)}-z^{(b)}\|_T
\le\|r_n\|_T\exp\bigl(c(T+\sqrt{TR})\bigr)\to 0$.
Thus, $b\mapsto z^{(b)}$ is continuous on $B_R$, and therefore its image $K_R$ is compact.

\smallskip
{\em Step 2.} We show that the infimum in the rate function  $I(\, \cdot \,)$ is attained. To do so, fix $\omega\in \Omega$ with
$I(\omega)<\infty$, and choose $b^n\in L^2([0,T];\bR^d)$ such that
$z^{(b^n)}=\omega$ and
\[
I(\omega)
\le
\frac12\int_0^T\|b_t^n\|^2\,dt
\le
I(\omega)+\frac1n.
\]
The sequence $(b^n)_{n\in\mathbb N}$ is contained in $B_R$ with \(R = 2 (I (\omega) + 1)\).
After passing to a subsequence, $b^n\rightharpoonup b$ for some
$b\in B_R$, so Step~1 implies $z^{(b)}=\omega$. Since the $L^2$-norm is weakly lower semicontinuous, it follows that
\[
\frac12\int_0^T\|b_t\|^2\,dt
\le
\liminf_{n\to\infty}
\frac12\int_0^T\|b_t^n\|^2\,dt
=
I(\omega).
\]
The reverse inequality follows from the definition of $I(\, \cdot \,)$, so $b$ attains
the infimum. Consequently, for every $r\ge 0$, the sublevel set $\{\omega\in\Omega\mathrel{\colon}I(\omega)\le r\}=K_{2r}$ is compact by the previous step. In particular, the rate function $I(\, \cdot \,)$ is lower semicontinuous.

\smallskip
{\em Step 3.} Set
$M:=\sup_{\omega\in\Omega}e^{-I(\omega)}\varphi(\omega)$.
We first show that $M$ is finite and the supremum is attained. 
First, using \ref{B2}, we obtain that 
\[
\| z^{(b)}\|_t\leq  \| y_0\| + C \int_0^t \big( 1 + \| z^{(b)} \|_s \big) \, ds +  C \int_0^t \big( 1 + \| z^{(b)} \|_s \big) \| b_s\| \, ds, 
\] 
and hence, Gronwall's lemma yields
\begin{align} \label{eq: strong moment}
1 +	\| z^{(b)} \|_T \leq ce^{c \int_0^T \| b_s \| \, ds} \leq c e^{c\sqrt{T} \| b \|_{L^2}}.
\end{align} 
 If $I(\omega)<\infty$, Step~2
provides a minimizing control $b\in L^2([0,T];\bR^d)$ with
$\|b\|_{L^2}=\sqrt{2I(\omega)}$, and hence
$1+\|\omega\|_T\le c\exp(c\sqrt{I(\omega)})$. Moreover, concavity and
$\varpi(0)=0$ imply
$\varpi(u)\le\varpi(1)(1+u)$ for all $u\ge0$. The modulus condition
therefore implies
\[
\varphi(\omega)
\le
\varphi(0)+\varpi(\|\omega\|_T)
\le
c_\varphi(1+\|\omega\|_T),
\qquad
c_\varphi:=\varphi(0)+\varpi(1).
\]
Consequently, we obtain
\[
e^{-I(\omega)}\varphi(\omega)
\le
c\,c_\varphi
\exp\Big(-I(\omega)+c\sqrt{I(\omega)}\Big)
\to 0
\qquad\text{as }I(\omega)\to\infty.
\]
Since $I(\, \cdot \,)$ is lower semicontinuous, $e^{-I}\varphi$ is upper semicontinuous, and bounded above on every compact
sublevel set of $I(\, \cdot \,)$. Together with the preceding estimate we obtain
$M<\infty$. If $M=0$, attainment is immediate by nonnegativity. If
$M>0$, every maximizing sequence is eventually contained in a compact
sublevel set of $I(\, \cdot \,)$, and upper semicontinuity yields a maximizer. Thus,
\[
M
=
\max_{\omega\in\Omega}e^{-I(\omega)}\varphi(\omega).
\]

We next identify the value $M$ with the deterministic and random control problem.
For every $b\in L^2([0,T];\bR^d)$, by definition of $I(\, \cdot \,)$ we get
$I(z^{(b)})\le\frac12\int_0^T\|b_t\|^2\,dt$, and therefore
\begin{equation}\label{eq:detcontrol}
e^{-\frac12\int_0^T\|b_t\|^2\,dt}\varphi(z^{(b)})
\le
e^{-I(z^{(b)})}\varphi(z^{(b)})
\le
M.
\end{equation}
Hence, the supremum over such $b$ is at most $M$. Let $\omega^\ast$
attain $M$. If $M>0$, then $I(\omega^\ast)<\infty$, and Step~2 guarantees
$b^\ast\in L^2([0,T];\bR^d)$ with
$z^{(b^\ast)}=\omega^\ast$ and
$\frac12\int_0^T\|b_t^\ast\|^2\,dt=I(\omega^\ast)$.
Both inequalities in \eqref{eq:detcontrol} are then equalities for
$b^\ast$. If $M=0$, it follows from \eqref{eq:detcontrol} that $b^\ast=0$ attains
$M$. Thus, the supremum equals $M$ and is attained.

We finally consider random controls. For $a\in\A$ and $\W$-a.a.~$\omega\in\Omega$, the control $a(\, \cdot \,,\omega)$ belongs
to $L^2([0,T];\bR^d)$ and $z^{(a)}(\omega)$ solves the deterministic
controlled equation with control $a(\, \cdot\,,\omega)$. Hence, $z^{(a)}(\omega)=z^{(a(\cdot, \,\omega))}$, and applying \eqref{eq:detcontrol} to the realized control $a(\, \cdot\, ,\omega)$ shows that
$\exp(-\frac12\int_0^T\|a_t(\omega)\|^2\,dt)\varphi(z^{(a)}(\omega))\le M$.
Hence, taking expectations and then the supremum over $a\in\A$ yields
\begin{equation}\label{eq-app-deterministic-value}
\max_{a\in\A}
\E^\W\Big[
e^{-\frac12\int_0^T\|a_t\|^2\,dt}\varphi(z^{(a)})
\Big]
=
\max_{b\in L^2([0,T];\bR^d)}
e^{-\frac12\int_0^T\|b_t\|^2\,dt}\varphi(z^{(b)})
=
\max_{\omega\in\Omega}e^{-I(\omega)}\varphi(\omega),
\end{equation}
because the supremum over random controls dominates the supremum over deterministic controls.

\smallskip
{\em Step 4.} 
We prove
\eqref{eq-app-controlled-deterministic-error}. 
Fix $R\ge0$ and
$a\in\A$ satisfying \(\W\)-a.s.
$\int_0^T\|a_s\|^2\,ds\le R$. 
From \eqref{eq: strong moment} we obtain that \(\W\)-a.s. 
\begin{align} \label{eq: moment}
\| z^{(a)} \|_T \leq c e^{c\sqrt{TR}}.
\end{align} 
Similarly, using \ref{B2}, the Cauchy--Schwarz inequality, and the Burkholder--Davis--Gundy inequality, we find that 
\begin{align*}
	\E^\W \Big[ \| Z^{(p, a)} \|^2_{t \wedge \tau_n} \Big] &\leq 4\| y_0 \|^2 + c ( 1 + R )\, \E^\W \Big[ \int_0^{t \wedge \tau_n} \big( 1 + \| Z^{(p, a)} \|^2_s \big) \, ds \Big], \quad t \in [0, T], 
\end{align*}
where \(\tau_n := \inf \{ t \in [0, T] \colon \| Z^{(p, a)}_t \| \geq n \}\). In view of this inequality, Gronwall's lemma yields that 
\[
\E^\W \Big[ \| Z^{(p, a)} \|^2_T \Big] \leq \liminf_{n \to \infty} \E^\W \Big[ \| Z^{(p, a)} \|^2_{T \wedge \tau_n} \Big] \leq (1 + 4 \| y_0 \|^2) e^{ c (1 + R) T} < \infty. 
\]
In particular, we have
\begin{align} \label{eq: finite}
\E^\W \Big[ \| Z^{(p, a)} - z^{(a)} \|_T^2 \Big] < \infty.
\end{align} 
In the following we use a time-change argument to reduce the rate \(e^{R}\) in the above Gronwall argument by \(e^{\sqrt{R}}\).
For \(t \in [0, T]\) and \(u \in \bR_+\), we set 
\[
A_t := \int_0^t (1 + \|a_s\| \big) \, ds, \quad L_u:= \inf \{s \in [0, T] \colon A_s > u \} \wedge T. 
\] 
It is well-known that \((L_u)_{u \geq 0}\) is an increasing family of stopping times. 
Furthermore, \(\W\)-a.s. \(L_u = T\) for all \(u \geq T + \sqrt{T R}\), and \(A_{L_t} - A_{L_s} \leq t - s\) for all \(s < t\). Indeed, to explain the second claim, if \(L_s \geq T\), the inequality is trivial, and if \(L_s < T\), then \(A_{L_s} = s\) and, because \(A_{L_t} \leq t\), the inequality holds again. From this fact, it follows that 
\[
\int_0^\infty g (s) \, d A_{L_s} \leq \int_0^\infty g (s) \, ds
\] 
for all non-negative measurable functions \(g \colon \bR_+ \to \bR_+\).
Using It\^o's formula, once again \ref{B2}, the change of variable rule \cite[Proposition~4.10, p.~9]{RY}, and the bound \eqref{eq: moment}, we obtain, \(\W\)-a.s. for all \(t \in \bR_+\),  
\begin{align*}
	\| &Z^{(p,a)}_{L_t} -z^{(a)}_{L_t}\|^2 
	\\&= 2 \int_0^{L_t} \langle Z^{(p, a)}_s - z^{(a)}_s, \mu (s, Z^{(p, a)}) - \mu (s, z^{(a)}) + ( \sigma (s, Z^{(p, a)}) - \sigma (s, z^{(a)}) ) a_s \rangle \, ds 
	\\&\qquad + \frac{2}{\sqrt{p - 1}} \int_0^{L_t} \langle Z^{(p, a)}_s - z^{(a)}_s, \sigma (s, Z^{(p, a)}) \, d X_s \rangle 
 + \frac{1}{p - 1} \int_0^{L_t}\| \sigma (s, Z^{(p, a)})\|^2 \, ds
	\\&\leq c  \int_0^{L_t} \|Z^{(p, a)} - z^{(a)} \|^2_s \, d A_s + \frac{2}{\sqrt{p - 1}} \int_0^{L_t} \langle Z^{(p, a)}_s - z^{(a)}_s, \sigma (s, Z^{(p, a)}) \, d X_s \rangle 
	\\&\qquad + \frac{1}{p - 1} \int_0^{L_t}\| \sigma (s, Z^{(p, a)})\|^2 \, ds
	\\&\leq c  \int_0^{t} \|Z^{(p, a)} - z^{(a)} \|^2_{L_s} \, d s + \frac{2}{\sqrt{p - 1}} \int_0^{L_t} \langle Z^{(p, a)}_s - z^{(a)}_s, \sigma (s, Z^{(p, a)}) \, d X_s \rangle 
	\\&\qquad + \frac{2}{p - 1} \int_0^{L_t}\| \sigma (s, z^{(a)})\|^2 \, ds
		\\&\leq c  \int_0^{t} \|Z^{(p, a)} - z^{(a)} \|^2_{L_s} \, d s + \frac{2}{\sqrt{p - 1}} \int_0^{L_t} \langle Z^{(p, a)}_s - z^{(a)}_s, \sigma (s, Z^{(p, a)}) \, d X_s \rangle 
+ \frac{c}{p - 1} e^{c \sqrt{R}}, 
\end{align*}
where we recall that the constant \(c\) may have changed from line to line.
Using the Burkholder--Davis--Gundy inequality, Young's inequality, and the bound \eqref{eq: moment}, we obtain that
\begin{align*}
	\E^\W \Big[ \sup_{s \in [0, t]} \Big| & \frac{2}{\sqrt{p-1}} \int_0^{L_s} \langle Z^{(p, a)}_u - z^{(a)}_u, \sigma (u, Z^{(p, a)}) \, d X_u \rangle \Big| \Big] 
	\\&\leq  \frac{c}{\sqrt{p - 1}} \E^\W\Big[ \Big( \int_0^{L_t} \| \sigma^* (s, Z^{(p, a)}) ( Z^{(p, a)}_s - z^{(a)}_s ) \|^2 \, ds \Big)^{1/2} \Big]
		\\&\leq  \frac{c}{\sqrt{p - 1}} \E^\W\Big[ \| Z^{(p, a)} - z^{(a)} \|_{L_t} \, \Big( \int_0^{L_t} \| \sigma (s, Z^{(p, a)}) \|^2 \, ds \Big)^{1/2} \Big]
	\\&\leq \frac{1}{2} \, \E^\W \Big[ \| Z^{(p, a)} - z^{(a)} \|^2_{L_t} \Big] + \frac{c^2}{2(p - 1)} \E^\W \Big[ \int_0^{L_t} \| \sigma (s, Z^{(p, a)} ) \|^2 \, ds \Big]
	\\&\leq \frac{1}{2} \, \E^\W \Big[ \| Z^{(p, a)} - z^{(a)} \|^2_{L_t} \Big] + c\, \E^\W \Big[ \int_0^{t} \| Z^{(p, a)} - z^{(a)} \|^2_{L_s} \, ds \Big] + \frac{c}{p - 1} e^{c \sqrt{R}}.
	\end{align*}
	In summary, we obtain, for all \(t \in \bR_+\),	
\begin{equation} \label{eq: last inequ} \begin{split}
	\E^{\W} \Big[  \|Z^{(p,a)}-z^{(a)}\|_{L_t}^2 \Big] \leq c \int_0^t \E^\W \Big[ \|Z^{(p,a)}-z^{(a)}\|_{L_s}^2 \Big]  \, ds &+ \frac{c}{p - 1} e^{c \sqrt{R}} \\&+ \frac{1}{2} \, \E^\W \Big[ \| Z^{(p, a)} - z^{(a)} \|^2_{L_t} \Big].
\end{split} \end{equation} 
Recalling \eqref{eq: finite}, we bring the last term in \eqref{eq: last inequ} on the left-hand side and use Gronwall's inequality to find, for all \(t \in \bR_+\), 
\begin{align*}
	\E^{\W} \Big[ \|Z^{(p,a)}-z^{(a)}\|_{L_t}^2 \Big] \leq \frac{c}{p - 1} e^{c \sqrt{R}} e^{c t}.
\end{align*}
Using this inequality for \(t = T + \sqrt{T R}\) yields \eqref{eq-app-controlled-deterministic-error}.

\smallskip
{\em Step 5.} We extend the estimate from Step~4 to arbitrary controls by truncation and use that $\varphi$ has modulus $\varpi$. Fix
$a\in\A$. For $n\in\mathbb{N}_0$, we define
\[
\begin{aligned}
E_n
&:=
\Big\{
\omega\in\Omega
\mathrel{\colon}
n\le\int_0^T\|a_s(\omega)\|^2\,ds<n+1
\Big\},\\
\rho_n
&:=
\inf\Big\{
t\in[0,T]
\colon
\int_0^t\|a_s\|^2\,ds\ge n+1
\Big\}\wedge T,
\qquad
a_t^{(n)}:=a_t\1_{\{t\le\rho_n\}} \in \mathcal{A}.
\end{aligned}
\]
Thanks to the Lipschitz assumptions on \(\mu\) and \(\sigma\), we have \(\W\)-a.s. 
\[ Z^{(p, a)}_t = Z^{(p, a^{(n)})}_t \quad \text{and}\quad z^{(a)}_t = z^{(a^{(n)})}_t\] for all \(t \leq \rho_n\). Moreover, on \(E_n\) we have \(T \leq \rho_n\). Using these facts, Fubini's theorem, that \(\W\)-a.s. \(\bigcup_{n = 0}^\infty E_n = \{ \int_0^T \| a_s \|^2 \, ds < \infty \} = \Omega\), and that \(\W\)-a.s. \(\int_0^T \| a^{(n)}_s\|^2 \,ds \leq n + 1\), we obtain that 
\begin{equation} \label{eq: summation} \begin{split}
	\E^\W \Big[ e^{- \frac{1}{2} \int_0^T \| a_s \|^2 \, ds } \| Z^{(p, a)} - z^{(a)} \|_T \Big] &= \sum_{n = 0}^\infty 	\E^\W \Big[ e^{- \frac{1}{2} \int_0^T \| a_s \|^2 \, ds } \| Z^{(p, a)} - z^{(a)} \|_T \1_{E_n} \Big]
	\\&\leq \sum_{n = 0}^\infty e^{- n /2}	\E^\W \Big[ \| Z^{(p, a^{(n)})} - z^{(a^{(n)})} \|_T \Big] 
	\\&\leq \sum_{n = 0}^\infty \frac{c}{\sqrt{p - 1}} e^{c \sqrt{ n + 1 } - n / 2} \leq \frac{c}{\sqrt{p - 1}}, 
\end{split}
\end{equation} 
where \(c\) may have changed in the last inequality.
Using Jensen's inequality under the probability measure
$q^{-1}\exp(-\frac12\int_0^T\|a_s\|^2\,ds)\,d\W$ with \(q:=\E^\W[\exp(-\frac12\int_0^T\|a_s\|^2\,ds)]\), and that $q \varpi(x/q)\le \varpi(x)$ for all $x\ge 0$, we finally find that 
\begin{align*}
\E^\W\Big[
e^{-\frac12\int_0^T\|a_s\|^2\,ds}
\big|\varphi(Z^{(p,a)})-\varphi(z^{(a)})\big|
\Big]
&\le
q \varpi\Big(
\frac1q
\E^\W\Big[
e^{-\frac12\int_0^T\|a_s\|^2\,ds}
\|Z^{(p,a)}-z^{(a)}\|_T
\Big]
\Big)
\\&\leq 
\varpi\Big(
\E^\W\Big[
e^{-\frac12\int_0^T\|a_s\|^2\,ds}
\|Z^{(p,a)}-z^{(a)}\|_T
\Big]
\Big)
\\&\le
\varpi\Big(\frac{c}{\sqrt{p-1}}\Big).
\end{align*}
Finally, Theorem~\ref{theo-VF-SDE} and 
\eqref{eq-app-deterministic-value} therefore yield
\begin{align*}
\Big|
\E^\W\Big[\varphi(Y^{(p)})^p\Big]^{1/p}
-
\max_{\omega\in\Omega}e^{-I(\omega)}\varphi(\omega)
\Big|
&\leq
\sup_{a\in\A}
\E^\W\Big[
e^{-\frac12\int_0^T\|a_s\|^2\,ds}
\big|\varphi(Z^{(p,a)})-\varphi(z^{(a)})\big|
\Big]
\\&\leq
\varpi\Big(\frac{c}{\sqrt{p-1}}\Big),
\end{align*}
which shows \eqref{eq-app-modulus-comparison}. The proof is complete.
\end{proof}

\begin{remark}\label{rem-app-finite-p-rates}
If $\varphi$ is $\alpha$-H\"older continuous with constant
$L_\varphi$, where $\alpha\in(0,1]$, then
$\varpi(u)=L_\varphi u^\alpha$ is a concave modulus of continuity, and
the right-hand side of \eqref{eq-app-modulus-comparison} equals
$K^\alpha L_\varphi/(p-1)^{\alpha/2}$. In particular, the error is of
order $1/(p-1)^{1/2}$ for Lipschitz functionals. 

While this excludes indicator functions, the theorem also applies
to families of functionals depending on $p$. More precisely, if each
$\varphi_p$ has a concave modulus $\varpi_p$, then the corresponding
stochastic and deterministic values differ by a quantity tending to
zero whenever $\varpi_p(K/\sqrt{p-1})\to 0$.
\end{remark}

\subsubsection*{Tracking of trajectories}
We next focus on the corresponding results for sets. For $A\subset\Omega$, let
$I(A):=\inf_{\omega\in A}I(\omega)$. Moreover, for
$\delta>0$, we define
\[
A^\delta
:=
\{
\omega\in\Omega
\mathrel{\colon}
d_T(\omega,A)<\delta
\},
\qquad
A_{-\delta}
:=
\{
\omega\in A
\mathrel{\colon}
d_T(\omega,A^c)\ge\delta
\},
\]
where $d_T(\omega,A):=\inf_{\eta\in A}\|\omega-\eta\|_T$ defines the distance function with the convention
$d_T(\omega,\emptyset):=\infty$. 
The following two-sided finite-$p$ bounds compare the probability of a set with the deterministic costs of its inner and outer neighborhoods.

\begin{corollary}\label{cor-app-event-neighborhoods}
Suppose that \ref{B1}--\ref{B2} hold, and let $K$ be the constant from
Theorem~\ref{thm-app-finite-p-comparison}. Then, for every universally
measurable set $B\subset\Omega$, every $\delta>0$, and every $p\ge2$,
\begin{equation}\label{eq-app-event-neighborhoods}
\Big(
e^{-I(B_{-\delta})}
-\frac{K}{\delta\sqrt{p-1}}
\Big)_+
\le
\W(Y^{(p)}\in B)^{1/p}
\le
e^{-I(B^\delta)}
+\frac{K}{\delta\sqrt{p-1}}.
\end{equation}
\end{corollary}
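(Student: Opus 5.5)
The plan is to sandwich the indicator $\1_B$ between two Lipschitz functionals and apply Theorem~\ref{thm-app-finite-p-comparison}(b) to each, using the linear modulus $\varpi(u)=u/\delta$.

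\emph{Upper bound.} Set
\[
\varphi^\delta(\omega):=\bigl(1-d_T(\omega,B)/\delta\bigr)_+ .
\]
This functional is $1/\delta$-Lipschitz, since $d_T(\cdot,B)$ is $1$-Lipschitz. It takes values in $[0,1]$, satisfies $\varphi^\delta\ge \1_B$, and vanishes outside $B^\delta$. If $B=\emptyset$, the upper bound is trivial, so assume $B\neq\emptyset$. By monotonicity,
\[
\W(Y^{(p)}\in B)^{1/p}\le \E^\W[\varphi^\delta(Y^{(p)})^p]^{1/p}.
\]
Theorem~\ref{thm-app-finite-p-comparison}(b) with $\varpi(u)=u/\delta$ bounds the right-hand side by
\[
\max_\omega e^{-I(\omega)}\varphi^\delta(\omega)+\frac{K}{\delta\sqrt{p-1}} .
\]
Because $\varphi^\delta\le \1_{B^\delta}$, the maximum is at most $\sup_{\omega\in B^\delta}e^{-I(\omega)}=e^{-I(B^\delta)}$. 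This gives the right-hand inequality of \eqref{eq-app-event-neighborhoods}.

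\emph{Lower bound.} Set
\[
\psi_\delta(\omega):=\bigl(1-d_T(\omega,B_{-\delta})/\delta\bigr)_+ .
\]
If $B_{-\delta}=\emptyset$, then $I(B_{-\delta})=\infty$ and the left-hand side is $0$, so there is nothing to prove. Otherwise $\psi_\delta$ is $1/\delta$-Lipschitz and equals $1$ on $B_{-\delta}$. We check that $\psi_\delta\le \1_B$. If $\psi_\delta(\omega)>0$, then $d_T(\omega,B_{-\delta})<\delta$, so there is $\eta\in B_{-\delta}$ with $\|\omega-\eta\|_T<\delta\le d_T(\eta,B^c)$; hence $\omega\in B$. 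Therefore
\[
\W(Y^{(p)}\in B)^{1/p}\ge \E^\W[\psi_\delta(Y^{(p)})^p]^{1/p}\ge \max_\omega e^{-I(\omega)}\psi_\delta(\omega)-\frac{K}{\delta\sqrt{p-1}} .
\]
The maximum dominates $\sup_{\omega\in B_{-\delta}}e^{-I(\omega)}=e^{-I(B_{-\delta})}$. Since the left-hand side is non-negative, we may take positive parts, which yields the left-hand inequality.

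\emph{Remaining point.} Measurability is immediate: $B$ is universally measurable and the sandwiching functionals are continuous. The only step needing care is the inclusion $\psi_\delta\le\1_B$, which is exactly the reason $B_{-\delta}$ is defined with a non-strict inequality $d_T(\cdot,B^c)\ge\delta$. I expect no real obstacle.
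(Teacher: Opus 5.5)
Your proof is correct and follows the paper's argument: sandwich $\1_B$ between $[0,1]$-valued $1/\delta$-Lipschitz functionals and apply Theorem~\ref{thm-app-finite-p-comparison}(b) with $\varpi(u)=u/\delta$, using the same upper function $(1-d_T(\cdot,B)/\delta)_+$. The only difference is your lower function $(1-d_T(\cdot,B_{-\delta})/\delta)_+$ in place of the paper's $\min\{1,d_T(\cdot,B^c)/\delta\}$. The paper's choice vanishes on $B^c$ by construction, so it skips your short inclusion argument, and it dominates yours; both equal $1$ on $B_{-\delta}$ and therefore give the same bound.
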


\begin{proof}
The assertion is immediate for $B=\emptyset$. Otherwise, set
\[
\varphi_\delta^-(\omega)
:=
\min\Big\{
1,\frac{d_T(\omega,B^c)}{\delta}
\Big\},
\qquad
\varphi_\delta^+(\omega)
:=
\Big(
1-\frac{d_T(\omega,B)}{\delta}
\Big)_+.
\]
Both functions are $[0,1]$-valued and $1/\delta$-Lipschitz, and
$\varphi_\delta^-\le\1_B\le\varphi_\delta^+$. Moreover,
$\varphi_\delta^-=1$ on $B_{-\delta}$, whereas
$\varphi_\delta^+$ vanishes outside $B^\delta$. 
Applying Theorem~\ref{thm-app-finite-p-comparison} with the concave
modulus $\varpi(u)=u/\delta$, $u\ge0$, yields
\[
\begin{aligned}
\W(Y^{(p)}\in B)^{1/p}
&\ge
\E^\W\Big[
\varphi_\delta^-(Y^{(p)})^p
\Big]^{1/p}
\ge
\max_{\omega\in\Omega}
e^{-I(\omega)}\varphi_\delta^-(\omega)
-\frac{K}{\delta\sqrt{p-1}}
\ge
e^{-I(B_{-\delta})}
-\frac{K}{\delta\sqrt{p-1}},\\
\W(Y^{(p)}\in B)^{1/p}
&\le
\E^\W\Big[
\varphi_\delta^+(Y^{(p)})^p
\Big]^{1/p}
\le
\max_{\omega\in\Omega}
e^{-I(\omega)}\varphi_\delta^+(\omega)
+\frac{K}{\delta\sqrt{p-1}}
\le
e^{-I(B^\delta)}
+\frac{K}{\delta\sqrt{p-1}}.
\end{aligned}
\]
Since the left-hand side is non-negative, the first estimate may be replaced
by its positive part, which proves \eqref{eq-app-event-neighborhoods}.
\end{proof}

\subsection{Gaussian approximation and moderate deviations}
\label{subsec-app-moderate-deviations}
In Subsection~\ref{subsec-app-deterministic-reduction} we studied a quantitative \(L^p\)-version of the Freidlin--Wentzell limit. In this section we proceed with a quantitative \(L^p\)-analysis of moderate deviations. 
More precisely, we center 
$Y^{(p)}$ at $z^{(0)}$ and investigate deviations on the scale $h/\sqrt{p-1}$, with $1\le h\le\sqrt{p-1}$. Then, we compare the rescaled
control problem at moment order $1+h^2$ with a linear control problem
whose stochastic part is Gaussian. The parameter $h$ connects the
Gaussian scale $h=1$ with the nonlinear scale $h=\sqrt{p-1}$.

Throughout this section, we will work under the assumptions \ref{B1}--\ref{B2}.
Fix $p\ge2$ and $1\le h\le\sqrt{p-1}$, and set
$\bar p:=1+h^2$ and $\bar\sigma:=h\sigma/\sqrt{p-1}$. Let
$\bar Y^{(\bar p)}$ and $\bar Z^{(\bar p,a)}$ denote the uncontrolled
and controlled processes in Theorem~\ref{theo-VF-SDE} with coefficients
$(\mu,\bar\sigma)$. Since
\[
\frac{\bar\sigma}{\sqrt{\bar p-1}}
=
\frac{\sigma}{\sqrt{p-1}},
\qquad
\bar\sigma a
=
\frac h{\sqrt{p-1}}\sigma a,
\]
the Lipschitz condition \ref{B2} entails that
$\bar Y^{(\bar p)}=Y^{(p)}$ and
$\bar Z^{(\bar p,a)}=Z^{(p,\frac h{\sqrt{p-1}}a)}$.
Moreover, we set
\[
\Xi^{(p,h,a)}
:=
\frac{\sqrt{p-1}}h
\Big(
\bar Z^{(\bar p,a)}-z^{(0)}
\Big),
\qquad
\Xi^{(p,h)}
:=
\Xi^{(p,h,0)}=
\frac{\sqrt{p-1}}h
\Big(
Y^{(p)}-z^{(0)}
\Big).
\]
For every non-negative universally measurable functional $\varphi \colon \Omega \to \bR_+$,
it follows from Theorem~\ref{theo-VF-SDE}, applied to
$\omega\mapsto\varphi(\sqrt{p-1} \, (\omega-z^{(0)})/h)$, that
\begin{equation}\label{eq-app-rescaled-variational-formula}
\E^\W\Big[
\varphi\bigl(\Xi^{(p,h)}\bigr)^{\bar p}
\Big]^{1/\bar p}
=
\sup_{a\in\A}
\E^\W\Big[
e^{-\frac12\int_0^T\|a_t\|^2\,dt}
\varphi\big (\Xi^{(p,h,a)}\big)
\Big].
\end{equation}
Since $z_t^{(0)}=y_0+\int_0^t\mu(s,z^{(0)})\,ds$, the rescaled process $\Xi^{(p,h,a)}$ has dynamics

\begin{equation} \label{eq: Xi} \begin{split}
\Xi^{(p,h,a)}_t
&=
\frac{\sqrt{p-1}}h
\int_0^t
\Big[
\mu\Big(
s,z^{(0)}+\frac h{\sqrt{p-1}}\Xi^{(p,h,a)}
\Big)
-\mu(s,z^{(0)})
\Big]ds\\
&\qquad+
\int_0^t
\sigma\Big(
s,z^{(0)}+\frac h{\sqrt{p-1}}\Xi^{(p,h,a)}
\Big)a_s\,ds
\\&\qquad+\frac1h
\int_0^t
\sigma\Big(
s,z^{(0)}+\frac h{\sqrt{p-1}} \Xi^{(p,h,a)}
\Big) \, dX_s.
\end{split} 
\end{equation}
The dynamics \eqref{eq: Xi} distinguish three regimes. The Brownian term is
multiplied by $1/h$, while condition \ref{B3} below controls the
linearization error in terms of $h/\sqrt{p-1}$. For $h=1$, the Brownian
term remains as $p\to\infty$, while the linearization error vanishes,
leading to the Gaussian approximation below. If $h\to\infty$ and
$h/\sqrt{p-1}\to0$, both the Brownian term and the linearization error
vanish, and the deterministic linear control problem determines the
moderate-deviation asymptotics. Finally, if $h=\sqrt{p-1}$, then
$\bar p=p$, $\bar Z^{(\bar p,a)}=Z^{(p,a)}$, and
$\Xi^{(p,h,a)}=Z^{(p,a)}-z^{(0)}$. Thus, we recover the nonlinear
controlled equation from
Subsection~\ref{subsec-app-deterministic-reduction}, and the
linearization error need not vanish.
Theorem~\ref{thm-app-moderate-comparison} quantifies the approximation
errors in these three regimes.
Gaussian approximations of small-noise diffusions in relative entropy
are studied in \cite{SanzAlonsoStuart2017}. Moderate-deviation
principles based on linearization are classical
\cite[Chapters~9--10]{BudhirajaDupuis2019}, and pathwise results for
diffusion and delay equations include
\cite{JacquierSpiliopoulos2020,SuoTaoZhang2018}.  

After increasing the constant $C$ in \ref{B2} if necessary, we
assume that the drift admits the following linear expansion at
$z^{(0)}$.
\begin{enumerate}[label=\textup{(B\arabic*)},start=3]
\item There is a Borel nonanticipative map
$A\colon[0,T]\times\Omega\to\bR^d$, linear in its second argument,
such that, writing $A_t\eta=A(t,\eta)$,
\[
\|A_t\eta\|
\le C\|\eta\|_t,
\qquad
\|\mu(t,z^{(0)}+\eta)-\mu(t,z^{(0)})-A_t\eta\|
\le C\|\eta\|_t^2
\]
for all $t\in[0,T]$ and $\eta\in\Omega$.
\label{B3}
\end{enumerate}
In \eqref{eq: Xi}, the coefficients are evaluated at
$z^{(0)}+h\Xi^{(p,h,a)}/\sqrt{p-1}$. Hence,
\ref{B3} bounds the difference between the rescaled drift and
$A_t\Xi^{(p,h,a)}$ by
$Ch\|\Xi^{(p,h,a)}\|_t^2/\sqrt{p-1}$, while \ref{B2} bounds
the difference between the diffusion coefficient and
$\sigma(t,z^{(0)})$ by
$Ch\|\Xi^{(p,h,a)}\|_t/\sqrt{p-1}$. Replacing the drift by its linear
part and freezing the diffusion coefficient at $z^{(0)}$ leads to
the approximation $v^{(a)}+G/h$ of $\Xi^{(p,h,a)}$, where $G$ and $v^{(a)}$ solve
\[
\begin{aligned}
G_t
&=
\int_0^tA_sG\,ds
+\int_0^t\sigma(s,z^{(0)})\,dX_s,\\
v_t^{(a)}
&=
\int_0^tA_sv^{(a)}\,ds
+\int_0^t\sigma(s,z^{(0)})a_s\,ds,\qquad a\in\A.
\end{aligned}
\]
Since the equation for $G$ is linear with deterministic coefficients,
$G$ is a centered Gaussian process.
Our goal is to apply Theorem~\ref{theo-VF-SDE} at moment order
$\bar p=1+h^2$, with initial value zero and linear coefficients
\[
(t,\eta)\mapsto A_t\eta,
\qquad
(t,\eta)\mapsto\sigma(t,z^{(0)}).
\]
Since $\sqrt{\bar p-1}=h$, the Brownian coefficient in the corresponding
uncontrolled equation is $\sigma(t,z^{(0)})/h$, while the control adds
the drift $\sigma(t,z^{(0)})a_t$. Since the drift is linear, its uncontrolled and controlled
solutions are therefore $G/h$ and
$\Gamma^{(h,a)}=v^{(a)}+G/h$, respectively. 

As $h\to\infty$, the Gaussian term $G/h$ vanishes and the linear
control problem becomes deterministic. Its rate function is given by 
\[
I_{\mathrm{lin}}(\eta)
:=
\inf \Big\{
\frac12\int_0^T\|b_t\|^2\,dt
\colon
b\in L^2([0,T];\bR^d),\ v^{(b)}=\eta
\Big\}, 
\]
and we set $I_{\mathrm{lin}}(B):=\inf_{\eta\in B}I_{\mathrm{lin}}(\eta)$
for $B\subseteq\Omega$.
Theorem~\ref{thm-app-finite-p-comparison}, applied to the linear coefficients, shows that
$I_{\mathrm{lin}}(\, \cdot\, )$ is a good rate function and that, whenever
$\varphi$ is non-negative and has a concave modulus (recall \eqref{eq: concave modulus}),
$e^{-I_{\mathrm{lin}}}\varphi$ attains a finite maximum. Moreover,
Theorem~\ref{theo-VF-SDE} ensures
\begin{equation}\label{eq-app-linear-values}
\E^\W\Big[
\varphi\Big(\frac Gh\Big)^{\bar p}
\Big]^{1/\bar p}
=
\sup_{a\in\A}
\E^\W\Big[
e^{-\frac12\int_0^T\|a_t\|^2\,dt}
\varphi\bigl(\Gamma^{(h,a)}\bigr)
\Big].
\end{equation}
The right-hand sides of
\eqref{eq-app-rescaled-variational-formula} and
\eqref{eq-app-linear-values} differ only through the controlled paths
$\Xi^{(p,h,a)}$ and $\Gamma^{(h,a)}$. The following theorem compares
these paths uniformly over all controls \(a \in \mathcal{A}\) and then identifies the
deterministic linear value.
\begin{theorem}\label{thm-app-moderate-comparison}
Suppose that \ref{B1}--\ref{B3} hold. There is a constant $K\ge1$,
depending only on $C$, $T$, and $\|y_0\|$, such that, for every
$p\ge2$ and $1\le h\le\sqrt{p-1}$,
\begin{equation}\label{eq-app-weighted-gaussian-comparison}
\begin{aligned}
\E^\W\Big[
\Big\|
\Xi^{(p,h)}-\frac Gh
\Big\|_T^{1+h^2}
\Big]^{1/(1+h^2)}
&=
\sup_{a\in\A}
\E^\W\Big[
e^{
-\frac12\int_0^T\|a_t\|^2\,dt
}
\Big\|
\Xi^{(p,h,a)}-\Gamma^{(h,a)}
\Big\|_T
\Big]\\
&\le
\frac{Kh}{\sqrt{p-1}}.
\end{aligned}
\end{equation}
If $\varphi\colon\Omega\to[0,\infty)$ has concave modulus $\varpi$,
then
\begin{equation}\label{eq-app-moderate-comparison}
\Big|
\E^\W\Big[
\varphi\big(
\Xi^{(p, h)}
\big)^{1+h^2}
\Big]^{1/(1+h^2)}
-
\max_{\eta\in\Omega}
e^{-I_{\mathrm{lin}}(\eta)}\varphi(\eta)
\Big|
\le
\varpi\Big(
K\Big(
\frac1h+\frac h{\sqrt{p-1}}
\Big)
\Big).
\end{equation}
\end{theorem}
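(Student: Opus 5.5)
The plan is to prove the identity in \eqref{eq-app-weighted-gaussian-comparison} first, then the bound there, and finally \eqref{eq-app-moderate-comparison}.

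\emph{The identity.} Consider the pair $(\bar Z^{(\bar p,a)},\Gamma^{(h,a)})$ as the solution of a joint SDE in $\bR^{2d}$ driven by $X$. Its drift is $(\mu(t,z_1)+\bar\sigma(t,z_1)a_t,\ A_tz_2+\sigma(t,z^{(0)})a_t)$ and its diffusion is $(\sigma(t,z_1)/\sqrt{p-1},\ \sigma(t,z^{(0)})/h)$. Both components have the form ``coefficient times $a$'' plus Brownian part with the same scaling $1/\sqrt{\bar p-1}$, as required by Theorem~\ref{theo-VF-SDE}. To match that theorem we absorb the factor: set $\tilde\sigma=(\bar\sigma(t,z_1),\sigma(t,z^{(0)}))$, a $2d\times d$ matrix. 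Theorem~\ref{theo-VF-SDE} is stated for square $\sigma$, but its proof only uses the functional strong-solution map $F$ and the shift $U^{(a)}$. So the same argument applies verbatim to $\bR^{2d}$-valued, path-dependent, Lipschitz coefficients driven by the $d$-dimensional $X$. We apply it to $\varphi(\omega_1,\omega_2)=\|\sqrt{p-1}(\omega_1-z^{(0)})/h-\omega_2\|_T$. The uncontrolled pair is $(Y^{(p)},G/h)$ and the controlled pair is $(\bar Z^{(\bar p,a)},\Gamma^{(h,a)})$, which gives the equality.

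\emph{The bound.} Mirroring Steps~4--5 of Theorem~\ref{thm-app-finite-p-comparison}, we first fix $a$ with $\int_0^T\|a\|^2\le R$. Let $D:=\Xi^{(p,h,a)}-\Gamma^{(h,a)}$ and $\epsilon:=h/\sqrt{p-1}\le1$. From \eqref{eq: Xi}, \ref{B2} and \ref{B3}, the drift of $D$ is $A_tD$ plus an error bounded by $C\epsilon\|\Xi\|_t^2+C\epsilon\|\Xi\|_t\|a_t\|$. The martingale part is $\frac1h\int(\sigma(s,z^{(0)}+\epsilon\Xi)-\sigma(s,z^{(0)}))dX$, with integrand bounded by $C\epsilon\|\Xi\|_s/h$.

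The main obstacle is controlling moments of $\|\Xi^{(p,h,a)}\|_T$ uniformly, including the quadratic term $\epsilon\|\Xi\|^2$. Note that $\epsilon\Xi=\bar Z-z^{(0)}$ is the difference of two Lipschitz equations, so no blow-up occurs. By the time-change Gronwall argument of Step~4, now with $\bar\sigma a=\epsilon\sigma a$ and noise $\sigma/\sqrt{p-1}$, we obtain
\[
\E^\W[\|\bar Z^{(\bar p,a)}-z^{(0)}\|_T^4]^{1/4}\le c e^{c\sqrt R}\epsilon .
\]
Dividing by $\epsilon$ gives $\E^\W[\|\Xi\|_T^4]^{1/4}\le ce^{c\sqrt R}$.

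Plugging this into an Itô/BDG estimate for $\|D\|^2$ along the same time change $L_u$ then gives $\E^\W[\|D\|_T^2]^{1/2}\le c e^{c\sqrt R}\epsilon$. The contributions are as follows: the quadratic term gives $\epsilon\,\E\|\Xi\|^4$; the $\|\Xi\|\|a\|$ term is absorbed via $dA_s$; and the martingale term gives $\epsilon/h\le\epsilon$. Finally, we remove the restriction on $R$ by the truncation and summation \eqref{eq: summation} with weights $e^{-n/2}$. This yields the bound $K\epsilon$ uniformly in $a\in\A$.

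\emph{The comparison \eqref{eq-app-moderate-comparison}.} By \eqref{eq-app-rescaled-variational-formula}, \eqref{eq-app-linear-values}, and the Jensen/concavity trick from Step~5, the two $L^{\bar p}$-values differ by at most $\varpi(K\epsilon)$. Next, we apply Theorem~\ref{thm-app-finite-p-comparison}(b) to the linear coefficients $(A_t\eta,\sigma(t,z^{(0)}))$ with initial value $0$ at order $\bar p=1+h^2\ge2$. This gives
\[
\bigl|\E^\W[\varphi(G/h)^{\bar p}]^{1/\bar p}-\max e^{-I_{\rm lin}}\varphi\bigr|\le\varpi(K/h).
\]
These linear coefficients satisfy \ref{B2} with a constant depending only on $C,T,\|y_0\|$, because $\|\sigma(t,z^{(0)})\|\le C(1+\|z^{(0)}\|_T)$. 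Combining the two estimates, and using subadditivity of the concave $\varpi$ ($\varpi(x)+\varpi(y)\le2\varpi(x+y)$, absorbed by enlarging $K$ via $2\varpi(u)\le\varpi(2u)$ from concavity), yields \eqref{eq-app-moderate-comparison}.
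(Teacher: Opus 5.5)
Your proofs of the identity and of the bound in \eqref{eq-app-weighted-gaussian-comparison} are sound.
\begin{itemize}
\item For the identity, you feed a joint $\bR^{2d}$-valued system through the functional-solution argument of Theorem~\ref{theo-VF-SDE}. The paper instead applies Theorem~\ref{theo-VF} directly to $\omega\mapsto\|\Xi^{(p,h)}(\omega)-G(\omega)/h\|_T$ after verifying the shift identities. The two are the same mechanism.
\item For the bound, you use fourth moments of $\Xi^{(p,h,a)}$ and an $L^2$ It\^o estimate for $D$ along the time change. The paper is more economical. It runs a plain $L^1$ Gronwall in $t$, which needs no time change because the Lipschitz term $A_tD$ carries no control factor. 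It takes second moments of $\Xi^{(p,h,a)}$ from Theorem~\ref{thm-app-finite-p-comparison}(a), applied to the rescaled coefficients $(\mu_\varepsilon,\sigma_\varepsilon)$.
\end{itemize}

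The gap is in the last step. Because you pass through the intermediate value $\E^\W[\varphi(G/h)^{\bar p}]^{1/\bar p}$, you only obtain $\varpi(K\varepsilon)+\varpi(K/h)$. The inequality you use to absorb this, $2\varpi(u)\le\varpi(2u)$, goes the wrong way. Concavity with $\varpi(0)=0$ gives $\varpi(\lambda u)\ge\lambda\varpi(u)$ for $\lambda\in[0,1]$, hence $\varpi(2u)\le2\varpi(u)$. In general $\varpi(x)+\varpi(y)$ cannot be bounded by $\varpi(K'(x+y))$ with a universal $K'$. For example, with $\varpi(u)=\min\{u,1\}$ and $x=y=1$ the left side equals $2$, while $\varpi\le1$. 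So your argument proves the estimate only with an extra factor $2$ in front of $\varpi$, not \eqref{eq-app-moderate-comparison}. That factor is harmless for the linear moduli used in Corollary~\ref{cor-app-moderate-deviations}, but not for the theorem as stated.

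The fix is to combine the two path errors \emph{before} applying $\varpi$.
\begin{itemize}
\item By \eqref{eq-app-deterministic-value} for the linear coefficients, $\max_\eta e^{-I_{\mathrm{lin}}(\eta)}\varphi(\eta)=\sup_{a\in\A}\E^\W\bigl[e^{-\frac12\int_0^T\|a_t\|^2\,dt}\,\varphi(v^{(a)})\bigr]$.
\item Together with \eqref{eq-app-rescaled-variational-formula}, the difference in \eqref{eq-app-moderate-comparison} is then at most $\sup_{a\in\A}\E^\W\bigl[e^{-\frac12\int_0^T\|a_t\|^2\,dt}\,|\varphi(\Xi^{(p,h,a)})-\varphi(v^{(a)})|\bigr]$.
\item Apply the Jensen/concavity trick once, using $\|\Xi^{(p,h,a)}-v^{(a)}\|_T\le\|\Xi^{(p,h,a)}-\Gamma^{(h,a)}\|_T+\|G\|_T/h$ and $\E^\W[\|G\|_T]\le c$.
\end{itemize}
This gives $\varpi(c/h+K\varepsilon)$, which is exactly the paper's argument and the stated bound.
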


\begin{proof}
Throughout the proof, $c$ denotes a finite constant that may change
from line to line and depends only on $C$, $T$, and $\|y_0\|$.

We first prove
\eqref{eq-app-weighted-gaussian-comparison}. Set
$\varepsilon:=h/\sqrt{p-1}$ and define
\[
\mu_\varepsilon(t,\eta)
:=
\frac{\mu(t,z^{(0)}+\varepsilon\eta)-\mu(t,z^{(0)})}{\varepsilon},
\qquad
\sigma_0(t):=\sigma(t,z^{(0)}),
\qquad
\sigma_\varepsilon(t,\eta):=\sigma(t,z^{(0)}+\varepsilon\eta).
\]
With this notation, the controlled equations become
\begin{equation} \label{eq: modified dynamics}
	\begin{split}
d\Xi_t^{(p,h,a)}
&=
\Big(
\mu_\varepsilon(t,\Xi^{(p,h,a)})
+\sigma_\varepsilon(t,\Xi^{(p,h,a)})a_t
\Big)dt
+\frac1h\sigma_\varepsilon(t,\Xi^{(p,h,a)})\,dX_t,\\
d\Gamma_t^{(h,a)}
&=
\Big(
A_t\Gamma^{(h,a)}+\sigma_0(t)a_t
\Big)dt
+\frac1h\sigma_0(t)\,dX_t,
\end{split}
\end{equation} 
with common initial value zero. Using $0<\varepsilon\le1$ and \eqref{eq: strong moment}, we find that the coefficients
$(\mu_\varepsilon,\sigma_\varepsilon)$ also satisfy
\ref{B1}--\ref{B2} with a constant independent of $p$ and $h$.
Therefore, as in the proof of
Theorem~\ref{theo-VF-SDE}, these equations admit functional solutions that we denote, with slight abuse of notation, by \(\Xi^{(p, h, a)}\) and \(G/h\).
Moreover, again as in the proof of Theorem~\ref{theo-VF-SDE}, it follows that \(\W\)-a.s. 
\[
\Xi^{(p,h)}\Big(X+h\int_0^\cdot a_t\,dt\Big)
=
\Xi^{(p,h,a)},
\qquad
\frac1hG\Big(X+h\int_0^\cdot a_t\,dt\Big)
=
\frac Gh+v^{(a)}
=
\Gamma^{(h,a)}.
\]
With these observations at hand, the equality in \eqref{eq-app-weighted-gaussian-comparison} follows from Theorem~\ref{theo-VF} applied with the exponent \(\bar{p} = 1 + h^2\), where we also use that \(\sqrt{ \bar{p} - 1 } = h\).

By virtue of the dynamics \eqref{eq: modified dynamics}, the inequality in \eqref{eq-app-weighted-gaussian-comparison} can be proved with similar ideas as used in Steps 4--5 from the proof of Theorem~\ref{thm-app-finite-p-comparison}. First, take \(a \in \mathcal{A}\) such that \(\W\)-a.s. \(\int_0^T \| a_s \|^2 \, ds \leq R\). Then, we get from \eqref{eq-app-controlled-deterministic-error} and \eqref{eq: moment}, applied to \((\mu^\varepsilon, \sigma^\varepsilon)\) with moment order \(\bar{p}\), that 
\begin{align} \label{eq: acc 1}
	\E^\W \Big[ \| \Xi^{(p, h, a)} \|^2_T \Big] \leq c e^{c \sqrt{R}}. 
\end{align}
Next, using the Cauchy--Schwarz inequality, and the Burkholder--Davis--Gundy inequality, we obtain that 
\begin{align*}
    \E^\W \Big[ \| \Xi^{(p, h, a)} - \Gamma^{(h, a)} \|_t \Big] &\leq c\, \Big( \int_0^t \E^\W \Big[ \| \Xi^{(p, h, a)} - \Gamma^{(h, a)} \|_s \Big] \, ds + \frac{h}{\sqrt{p -1}} \int_0^t \E^\W \Big[ \|\Xi^{(p, h, a)}\|^2_s \Big] \, ds \\&\qquad+ \frac{h}{\sqrt{p - 1}} \E^\W \Big[ \| \Xi^{(p, h, a)} \|_t \int_0^t \|a_s\| \, ds\Big] \\&\qquad+ \frac{1}{\sqrt{p - 1}} \E^\W \Big[ \Big(\int_0^t \| \Xi^{(p, h, a)}\|_s^2 \, ds \Big)^{1/2} \Big] \Big)
    \\&\leq c\, \Big( \int_0^t \E^\W \Big[ \| \Xi^{(p, h, a)} - \Gamma^{(h, a)} \|_s \Big] \, ds + \frac{h}{\sqrt{p - 1}} \Big( e^{c \sqrt{R}} + \sqrt{R} e^{c \sqrt{R}} \Big) \Big) 
    \\&\leq c\, \Big( \int_0^t \E^\W \Big[ \| \Xi^{(p, h, a)} - \Gamma^{(h, a)} \|_s \Big] \, ds + \frac{h}{\sqrt{p - 1}} e^{c \sqrt{R}} \Big).
\end{align*}
Now, Gronwall's lemma (that we may apply in view of \eqref{eq: moment} and \eqref{eq: acc 1}) yields that 
\begin{align} \label{eq: acc 2}
	\E^\W \Big[ \| \Xi^{(p, h, a)} - \Gamma^{(h, a)} \|_T \Big] \leq \frac{c h}{\sqrt{p - 1}} e^{c \sqrt{R}}. 
\end{align}
Taking \eqref{eq: acc 2} into account, the inequality in \eqref{eq-app-weighted-gaussian-comparison} follows as in \eqref{eq: summation}.

Finally, we prove \eqref{eq-app-moderate-comparison}.
By virtue of \eqref{eq-app-deterministic-value}, the left-hand side in \eqref{eq-app-moderate-comparison} is bounded from above by 
\begin{align*}
	\sup_{a \in \mathcal{A}} \E^\W\Big[ e^{- \frac{1}{2} \int_0^T \|a_s\|^2 \, ds} \big| \varphi (\Xi^{(p,h,a)}) - \varphi (v^{(a)} ) \big| \Big] \leq \varpi \Big( 	\sup_{a \in \mathcal{A}} \E^\W\Big[ e^{- \frac{1}{2} \int_0^T \|a_s\|^2 \, ds} \|\Xi^{(p,h,a)}- v^{(a)} \|_T \Big] \Big).
\end{align*}
Using that $\Gamma^{(h,a)}-v^{(a)}=G/h$ and $\E^\W[\|G\|_T^2]^{1/2}\le c$, we deduce from \eqref{eq-app-weighted-gaussian-comparison} that 
\begin{align*}
	\sup_{a \in \mathcal{A}} \E^\W\Big[ e^{- \frac{1}{2} \int_0^T \|a_s\|^2 \, ds} \|\Xi^{(p,h,a)}- v^{(a)} \|_T \Big] \leq \frac{c}{h} + \frac{Kh}{\sqrt{p - 1}}, 
\end{align*}
which completes the proof of \eqref{eq-app-moderate-comparison}.
\end{proof}

The first estimate in
Theorem~\ref{thm-app-moderate-comparison} directly compares
$\Xi^{(p,h)}$ with $G/h$. Choosing $h=\sqrt{q-1}$ yields the following
quantitative Gaussian approximation.
\begin{corollary}\label{cor-app-gaussian-approximation}
Suppose that \ref{B1}--\ref{B3} hold. There is a constant $K\ge1$ such
that, for every $p\ge2$ and $2\le q\le p$,
\begin{equation}\label{eq-app-stochastic-taylor}
\E^\W\Big[
\Big\|
\sqrt{p-1}\bigl(Y^{(p)}-z^{(0)}\bigr)-G
\Big\|_T^q
\Big]^{1/q}
\le
\frac{K(q-1)}{\sqrt{p-1}}.
\end{equation}
\end{corollary}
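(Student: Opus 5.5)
Set $h:=\sqrt{q-1}$. Since $2\le q\le p$, we have $1\le h\le\sqrt{p-1}$, so Theorem~\ref{thm-app-moderate-comparison} applies with $\bar p=1+h^2=q$. By definition,
\[
\Xi^{(p,h)}-\frac Gh=\frac1h\Big(\sqrt{p-1}\bigl(Y^{(p)}-z^{(0)}\bigr)-G\Big).
\]
The first estimate \eqref{eq-app-weighted-gaussian-comparison} then states
\[
\frac1h\,\E^\W\Big[\Big\|\sqrt{p-1}\bigl(Y^{(p)}-z^{(0)}\bigr)-G\Big\|_T^{q}\Big]^{1/q}\le\frac{Kh}{\sqrt{p-1}},
\]
because the $L^q$-norm is positively homogeneous. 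Multiplying by $h$ gives the bound $Kh^2/\sqrt{p-1}=K(q-1)/\sqrt{p-1}$, which is \eqref{eq-app-stochastic-taylor} with the same constant $K$.

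Only two points need checking. First, $G$ here is the same Gaussian process as in Theorem~\ref{thm-app-moderate-comparison}: its equation does not involve $h$ or $p$, so it is independent of the choice of $h$. Second, the constant $K$ of Theorem~\ref{thm-app-moderate-comparison} depends only on $C$, $T$ and $\|y_0\|$, and is therefore uniform in $p$ and $q$.

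So this is a direct specialization of the preceding theorem, and I do not expect any real obstacle.
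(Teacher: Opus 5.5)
Your proposal is correct and is essentially the paper's own proof. Both take $h=\sqrt{q-1}$, so that $\bar p=q$, and multiply the first estimate of Theorem~\ref{thm-app-moderate-comparison} by $h$, using $\sqrt{p-1}\bigl(Y^{(p)}-z^{(0)}\bigr)=h\,\Xi^{(p,h)}$.
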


\begin{proof}
Set $h=\sqrt{q-1}$, so that $\bar p=q$. Since
$\sqrt{p-1}(Y^{(p)}-z^{(0)})=h\Xi^{(p,h)}$, multiplying
\eqref{eq-app-weighted-gaussian-comparison} by $h$ proves
\eqref{eq-app-stochastic-taylor}.
\end{proof}

For every fixed $q\ge2$, \eqref{eq-app-stochastic-taylor} implies
convergence to $G$ in $q$-th mean with respect to the uniform norm,
while the case $1\le q<2$ follows from $q=2$. Thus,
\eqref{eq-app-stochastic-taylor} is a quantitative functional central
limit theorem in the uniform topology. The right-hand side still tends
to zero if $q\ge2$ depends on $p$ and $q=o(\sqrt p)$.

Combining this approximation with the Gaussian concentration of $G$,
we obtain the following finite-$p$ concentration inequality for
Lipschitz path functionals.
\begin{corollary}\label{cor:finite-p-concentration}
Suppose that \ref{B1}--\ref{B3} hold. There is a constant $K\ge1$,
depending only on $C$, $T$, and $\|y_0\|$, such that, for every
$p\ge2$, $1\le u\le p-1$, and every $L$-Lipschitz function
$\varphi\colon\Omega\to\bR$ with $L>0$,
\[
\W\Big(
\varphi(Y^{(p)})
-
\E^\W[\varphi(Y^{(p)})]
\ge
KL\Big(
\sqrt{\frac{u}{p-1}}
+
\frac{u}{p-1}
\Big)
\Big)
\le
2e^{-u}.
\]
\end{corollary}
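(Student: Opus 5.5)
We split $\varphi(Y^{(p)})-\E^\W[\varphi(Y^{(p)})]$ into a Gaussian part and an approximation error. Set $\widetilde G:=z^{(0)}+G/\sqrt{p-1}$ and write
\[
\varphi(Y^{(p)})-\E^\W[\varphi(Y^{(p)})]
=\big(\varphi(\widetilde G)-\E^\W[\varphi(\widetilde G)]\big)
+\Delta-\E^\W[\Delta],
\qquad \Delta:=\varphi(Y^{(p)})-\varphi(\widetilde G).
\]
Since $\varphi$ is $L$-Lipschitz, we have $|\Delta|\le L\|Y^{(p)}-\widetilde G\|_T=\frac{L}{\sqrt{p-1}}\,D$, where $D:=\|\sqrt{p-1}(Y^{(p)}-z^{(0)})-G\|_T$. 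We bound the two parts separately and then combine them with a union bound, each part contributing $e^{-u}$.

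\emph{Gaussian part.} The process $G=\int_0^\cdot \Phi(\cdot,s)\sigma(s,z^{(0)})\,dX_s$ is a linear image of $X$: by \ref{B3} and Gronwall, the linear solution map of $G_t=\int_0^t A_sG\,ds+\int_0^t\sigma(s,z^{(0)})\,dX_s$ is well defined. Hence $\omega\mapsto\varphi(z^{(0)}+G(\omega)/\sqrt{p-1})$ is Lipschitz with respect to the Cameron--Martin norm. Its constant is at most $cL/\sqrt{p-1}$, because $h\mapsto v^{(\dot h)}$ satisfies $\|v^{(\dot h)}\|_T\le c\|\dot h\|_{L^2}$ with $\|\sigma(\cdot,z^{(0)})\|$ bounded by \eqref{eq: strong moment} and \ref{B2}. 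Gaussian concentration (Borell--TIS / Tsirelson--Ibragimov--Sudakov) then yields
\[
\W\Big(\varphi(\widetilde G)-\E^\W[\varphi(\widetilde G)]\ge cL\sqrt{2u/(p-1)}\Big)\le e^{-u}.
\]
To justify the Cameron--Martin Lipschitz property rigorously, I would note that $G(X+h)=G(X)+v^{(\dot h)}$, which holds as in the proof of Theorem~\ref{thm-app-moderate-comparison}.

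\emph{Error part.} This is the main step, and I would use Corollary~\ref{cor-app-gaussian-approximation} with a $u$-dependent exponent. Take $q:=\max\{2,u+1\}$. The condition $u\le p-1$ guarantees $2\le q\le p$ (when $q=2$ we need $p\ge2$, which holds). The corollary gives $\E^\W[D^q]^{1/q}\le K(q-1)/\sqrt{p-1}\le 2Ku/\sqrt{p-1}$, using $u\ge1$. Markov's inequality then gives
\[
\W\big(D\ge e\,\E^\W[D^q]^{1/q}\big)\le e^{-q}\le e^{-u}.
\]
So with probability at least $1-e^{-u}$ we have $|\Delta|\le 2eKLu/(p-1)$. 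We also need the mean: $|\E^\W\Delta|\le \frac{L}{\sqrt{p-1}}\E^\W[D^2]^{1/2}\le cL/(p-1)\le cLu/(p-1)$, by the corollary with $q=2$.

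\emph{Combining.} On the intersection of the two good events, the deviation is at most $KL(\sqrt{u/(p-1)}+u/(p-1))$ after enlarging $K$. The union bound gives probability at most $2e^{-u}$. The constant depends only on $C$, $T$, $\|y_0\|$, since all ingredients do. The lower tail follows by applying the same argument to $-\varphi$.
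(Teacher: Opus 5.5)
Your argument is correct. Its second half is exactly the paper's Step~2: taking $q=1+u$ in Corollary~\ref{cor-app-gaussian-approximation}, applying Markov's inequality at the threshold $e\,\E^\W[D^q]^{1/q}$, using the $q=2$ bound for $\E^\W[\Delta]$, and closing with the union bound.

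The genuine difference is the Gaussian part. The paper does not import Gaussian concentration; it derives it from its own formula, in a Herbst-type argument:
\begin{enumerate}
\item It applies Theorem~\ref{theo-VF-SDE} to the linear equation at moment order $r$ and to the functional $\eta\mapsto e^{\frac{\lambda}{r}\varphi(z^{(0)}+\sqrt{(r-1)/(p-1)}\,\eta)}$. This writes $\E^\W[e^{\lambda\varphi(\widetilde G)}]^{1/r}$ as a supremum over controls.
\item It removes the control using $\|v^{(a)}\|_T\le c(\int_0^T\|a_s\|^2\,ds)^{1/2}$ together with $c\sqrt{x}-x/2\le c^2/2$.
\item Letting $r\to\infty$ gives $\E^\W[e^{\lambda(\varphi(\widetilde G)-\E^\W[\varphi(\widetilde G)])}]\le e^{c^2L^2\lambda^2/(2(p-1))}$. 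This is shown first for bounded $\varphi$, then for general $\varphi$ by truncation and Fatou.
\item A Chernoff bound then gives the tail.
\end{enumerate}
This is self-contained and shows the variational formula at work as a concentration tool. Your route is shorter but rests on an external theorem.

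If you keep your route, mind the versions. $G$ is defined only $\W$-a.s., and $G(X+h)=G(X)+v^{(\dot h)}$ holds outside a null set that depends on $h$. You therefore need a form of the Cameron--Martin concentration inequality that tolerates this, e.g.\ Enchev--Stroock to get a bounded Malliavin derivative, followed by Clark--Ocone.

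A cleaner option is the Banach-space Tsirelson--Ibragimov--Sudakov inequality. Apply it directly to the norm-Lipschitz map $\eta\mapsto\varphi(z^{(0)}+\eta/\sqrt{p-1})$ and the centered Gaussian element $G$ of $\Omega$. The weak variance of $G$ is $\sup_{\|b\|_{L^2}\le1}\|v^{(b)}\|_T^2\le c^2$, which gives the same bound.

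Finally, you do not need the kernel representation $G=\int_0^\cdot\Phi(\cdot,s)\sigma(s,z^{(0)})\,dX_s$. For path-dependent $A$ there is no classical fundamental matrix anyway. Linearity and continuity of the solution map $m\mapsto g$, where $g=\int_0^\cdot A_sg\,ds+m$, suffice.
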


We obtain a lower tail estimate by applying Corollary~\ref{cor:finite-p-concentration} to \(- \varphi\).

\begin{proof}
Throughout the proof, $c$ denotes a finite constant that may change
from line to line and depends only on $C$, $T$, and $\|y_0\|$.

Our strategy is to first obtain a concentration bound for the Gaussian approximation and afterwards to relate it to \(Y^{(p)}\) via Corollary~\ref{cor-app-gaussian-approximation}. 

{\em Step 1.} We first assume that \(\varphi\) is bounded.  
For \(\lambda> 0\) and \(r > 1\), we deduce from Theorem~\ref{theo-VF-SDE} that 
\begin{align*}
\E^\W\Big[ e^{ \lambda \varphi ( z^{(0)} + G / \sqrt{p - 1} ) } \Big]^{1/r} = \sup_{a \in \mathcal{A}} \E^\W\Big[ e^{- \frac{1}{2} \int_0^T \|a_s\|^2 \, ds} e^{ \frac{\lambda}{r} \varphi ( z^{(0)} +  \sqrt{ (r - 1) / (p - 1)}(v^{(a)} + G / \sqrt{ r - 1}) ) } \Big].
\end{align*} 
Using \eqref{eq: strong moment}, \ref{B2} and \ref{B3}, we find that 
\[
\| v^{(a)} \|_t \leq c \Big( \int_0^t \| v^{(a)} \|_s \, ds + \Big( \int_0^t \| a_s \|^2 \, ds \Big)^{1/2} \Big), \quad t \in [0, T].
\]
Hence, by Gronwall's lemma, we obtain 
\[
\| v^{(a)} \|_T \leq c \Big( \int_0^T \|a_s\|^2 \, ds \Big)^{1/2}.
\]
Using this bound and the Lipschitz continuity of \(\varphi\), we obtain 
\begin{align*}
	\E^\W\Big[ &e^{- \frac{1}{2} \int_0^T \|a_s\|^2 \, ds} e^{ \frac{\lambda}{r} \varphi ( z^{(0)} +  \sqrt{ (r - 1) / (p - 1)}(v^{(a)} + G / \sqrt{ r - 1}) ) } \Big] 
	\\&\leq \E^\W\Big[ e^{- \frac{1}{2} \int_0^T \|a_s\|^2 \, ds + \frac{cL\lambda \sqrt{r - 1}}{ r \sqrt{p - 1}} (\int_0^T \| a_s\|^2 \, ds )^{1/2} } e^{ \frac{\lambda}{r} \varphi ( z^{(0)} +  G / \sqrt{p - 1}) } \Big]
	\\&\leq e^{\frac{c^2 L^2 \lambda^2 (r - 1)}{2 r^2 (p - 1)}} \E^\W\Big[ e^{ \frac{\lambda}{r} \varphi ( z^{(0)} +  G / \sqrt{p - 1}) } \Big],
\end{align*}
where we used the standard estimate \(c \sqrt{x} - x / 2 \leq c^2 / 2\) for all \(x \geq 0\).
As the last term is independent of the control \(a\), we conclude that 
\begin{align*}
	\E^\W\Big[ e^{ \lambda \varphi ( z^{(0)} + G / \sqrt{p - 1} ) } \Big]^{1/r} \leq e^{\frac{c^2 L^2 \lambda^2 (r - 1)}{2 r^2 (p - 1)}} \E^\W\Big[ e^{ \frac{\lambda}{r} \varphi ( z^{(0)} +  G / \sqrt{p - 1}) } \Big].
\end{align*}
Taking the \(r\)-th power and letting \(r \to \infty\) yields that  
\[
\E^\W \Big[ e^{ \lambda \varphi ( z^{(0)} + G / \sqrt{p - 1} ) } \Big] \leq e^{\frac{c^2 L^2 \lambda^2}{2 (p - 1)}} e^{\E^\W [ \lambda \varphi ( z^{(0)} + G / \sqrt{p - 1} ) ]}, 
\]
where we used that \(\varphi\) is bounded and standard properties of the moment generating function. 
Although we established this estimate for bounded \(\varphi\), approximating an arbitrary \(\varphi\) by \((-n) \vee \varphi \wedge n\) and using Fatou's lemma, which is possible because of the exponential, we conclude that 
\[
\E^\W \Big[ e^{ \lambda (\varphi ( z^{(0)} + G / \sqrt{p - 1} ) - \E^\W [  \varphi ( z^{(0)} + G / \sqrt{p - 1} ) ] )} \Big] \leq e^{\frac{c^2 L^2 \lambda^2}{2 (p - 1)}}
\]
holds for all \(L\)-Lipschitz functions \(\varphi \colon \Omega \to \bR\).
Now, using Chebyshev's inequality, we obtain 
\[
\W \Big( \varphi ( z^{(0)} + G / \sqrt{p - 1} ) - \E^\W [  \varphi ( z^{(0)} + G / \sqrt{p - 1} ) ] \geq s \Big) \leq e^{- \lambda s + \frac{c^2 L^2 \lambda^2}{2 (p - 1)}}, 
\] 
which holds for all $\lambda>0$. The right-hand side attains its minimum at \(\lambda = \frac{s (p - 1)}{c^2 L^2}\), which implies
\[
\W \Big( \varphi ( z^{(0)} + G / \sqrt{p - 1} ) - \E^\W [  \varphi ( z^{(0)} + G / \sqrt{p - 1} ) ] \geq s \Big) \leq e^{- \frac{s^2 (p - 1)}{2 c^2 L^2}}.
\]
Substituting \(s = c L \sqrt{ 2u/(p - 1)}\), we obtain 
\begin{align*}
    \W \Big( \varphi ( z^{(0)} + G / \sqrt{p - 1} ) - \E^\W [  \varphi ( z^{(0)} + G / \sqrt{p - 1} ) ] \geq c L \sqrt{ \frac{u}{p - 1}} \Big) \leq e^{- u}.
\end{align*}

{\em Step 2.} We next transfer this estimate to $Y^{(p)}$. Taking $q=1+u$ in
\eqref{eq-app-stochastic-taylor} and applying Chebyshev's inequality yields
\[
\W\Big(
\Big\|
Y^{(p)}-z^{(0)}-\frac{G}{\sqrt{p-1}}
\Big\|_T
\ge
\frac{cu}{p-1}
\Big)
\le
e^{-(1+u)}.
\]
The same estimate with $q=2$ also implies
\[
\Big|
\E^\W[\varphi(Y^{(p)})]
-
\E^\W\Big[
\varphi\Big(
z^{(0)}+\frac{G}{\sqrt{p-1}}
\Big)
\Big]
\Big|
\le
\frac{cL}{p-1}.
\]
Consequently,
\begin{align*}
\varphi(Y^{(p)})-\E^\W[\varphi(Y^{(p)})]
&\le
L\Big\|
Y^{(p)}-z^{(0)}-\frac{G}{\sqrt{p-1}}
\Big\|_T\\
&\quad+
\varphi\Big(
z^{(0)}+\frac{G}{\sqrt{p-1}}
\Big)
-
\E^\W\Big[
\varphi\Big(
z^{(0)}+\frac{G}{\sqrt{p-1}}
\Big)
\Big]
+
\frac{cL}{p-1}.
\end{align*}
In summary, for a sufficiently large \(K\), we obtain 
\begin{align*}
	\W \Big( \varphi (Y^{(p)}) - \E^\W [ \varphi (Y^{(p)}) ] \geq K L \Big( \sqrt{ \frac{ u }{ p - 1} } + \frac{u }{p - 1} \Big) \Big) &\leq e^{- ( 1 + u )} + e^{- u} \leq 2 e^{- u}.
\end{align*}
This completes the proof.
\end{proof}

We next turn to moderate deviations. If $h\to\infty$ and
$h/\sqrt{p-1}\to0$, both errors in
\eqref{eq-app-moderate-comparison} vanish, and the deterministic linear
control problem determines the limiting event probabilities. The
following corollary provides the corresponding finite-parameter bounds
and the resulting moderate-deviation principle.
\begin{corollary}\label{cor-app-moderate-deviations}
Suppose that \ref{B1}--\ref{B3} hold, and let $K$ be the constant from
Theorem~\ref{thm-app-moderate-comparison}. Then, for every $p\ge2$,
$1\le h\le\sqrt{p-1}$, $\delta>0$, and every universally measurable
set $B\subseteq\Omega$,
\begin{equation}\label{eq-app-moderate-events}
\begin{aligned}
\Big(
e^{-I_{\mathrm{lin}}(B_{-\delta})}
-\frac K\delta
\Big(
\frac1h+\frac h{\sqrt{p-1}}
\Big)
\Big)_+
&\le
\W\Big(
\frac{\sqrt{p-1}}h
\bigl(Y^{(p)}-z^{(0)}\bigr)\in B
\Big)^{1/(1+h^2)}\\
&\le
e^{-I_{\mathrm{lin}}(B^\delta)}
+\frac K\delta
\Big(
\frac1h+\frac h{\sqrt{p-1}}
\Big).
\end{aligned}
\end{equation}
Take a sequence $(h_p)_{p\ge2}$ that satisfies
$1\le h_p\le\sqrt{p-1}$, $h_p\to\infty$, and
$h_p/\sqrt{p-1}\to0$. Then, the rescaled processes
$\sqrt{p-1}(Y^{(p)}-z^{(0)})/h_p$
satisfy the large-deviation principle on $\Omega$ with speed $h_p^2$
and good rate function $I_{\mathrm{lin}}(\, \cdot\, )$. That means, for
every open set $O\subseteq\Omega$ and every closed set
$F\subseteq\Omega$,
\begin{equation}\label{eq-app-moderate-ldp}
\begin{aligned}
\liminf_{p\to\infty}
\frac1{h_p^2}
\log\W\Big(
\frac{\sqrt{p-1}}{h_p}
\bigl(Y^{(p)}-z^{(0)}\bigr)\in O
\Big)
&\ge
-I_{\mathrm{lin}}(O),\\
\limsup_{p\to\infty}
\frac1{h_p^2}
\log\W\Big(
\frac{\sqrt{p-1}}{h_p}
\bigl(Y^{(p)}-z^{(0)}\bigr)\in F
\Big)
&\le
-I_{\mathrm{lin}}(F).
\end{aligned}
\end{equation}
Consequently, if $B\subseteq\Omega$ is universally measurable and
satisfies
$I_{\mathrm{lin}}(B^\circ)=I_{\mathrm{lin}}(B)
=I_{\mathrm{lin}}(\overline B)<\infty$, then
\begin{equation}\label{eq-app-moderate-continuity-set}
\W\Big(
\frac{\sqrt{p-1}}{h_p} (Y^{(p)}-z^{(0)})
\in
B
\Big)
=
e^{-h_p^2I_{\mathrm{lin}}(B)+o(h_p^2)}
\qquad\text{as }p\to\infty.
\end{equation}
\end{corollary}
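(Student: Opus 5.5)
The finite-parameter bounds \eqref{eq-app-moderate-events} will follow exactly as in the proof of Corollary~\ref{cor-app-event-neighborhoods}, now using Theorem~\ref{thm-app-moderate-comparison} in place of Theorem~\ref{thm-app-finite-p-comparison}. For $B\neq\emptyset$, I take the $1/\delta$-Lipschitz sandwich functions
\[
\varphi_\delta^-:=\min\{1,d_T(\cdot,B^c)/\delta\},\qquad \varphi_\delta^+:=(1-d_T(\cdot,B)/\delta)_+ .
\]
They satisfy $\varphi_\delta^-\le\1_B\le\varphi_\delta^+$, with $\varphi_\delta^-=1$ on $B_{-\delta}$ and $\varphi_\delta^+=0$ off $B^\delta$. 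Apply \eqref{eq-app-moderate-comparison} with the modulus $\varpi(u)=u/\delta$ and with $\Xi^{(p,h)}=\sqrt{p-1}(Y^{(p)}-z^{(0)})/h$. Monotonicity of the $L^{1+h^2}$-norm then gives the upper bound, since $\max_\eta e^{-I_{\mathrm{lin}}(\eta)}\varphi_\delta^+(\eta)\le e^{-I_{\mathrm{lin}}(B^\delta)}$. It also gives the lower bound, since $\max_\eta e^{-I_{\mathrm{lin}}(\eta)}\varphi_\delta^-(\eta)\ge e^{-I_{\mathrm{lin}}(B_{-\delta})}$. Taking the positive part completes this step. The case $B=\emptyset$ is trivial.

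For the LDP \eqref{eq-app-moderate-ldp}, set $\epsilon_p:=K(1/h_p+h_p/\sqrt{p-1})\to0$.

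\emph{Upper bound.} Let $F$ be closed and raise the upper estimate to the power $1+h_p^2$. This gives
\[
\frac{1}{h_p^2}\log\W(\cdots\in F)\le\frac{1+h_p^2}{h_p^2}\log\big(e^{-I_{\mathrm{lin}}(F^\delta)}+\epsilon_p/\delta\big).
\]
Letting $p\to\infty$ yields $\le -I_{\mathrm{lin}}(F^\delta)$ whenever $I_{\mathrm{lin}}(F^\delta)<\infty$; otherwise the bound is $-\infty$, which is even stronger. Then let $\delta\downarrow0$. The remaining point is $I_{\mathrm{lin}}(F^\delta)\uparrow I_{\mathrm{lin}}(F)$ for closed $F$. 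This is the standard consequence of goodness of $I_{\mathrm{lin}}$, which holds because Theorem~\ref{thm-app-finite-p-comparison} applies to the linear coefficients. Argue by contradiction: if the limit were some $r<I_{\mathrm{lin}}(F)$, pick near-minimizers $\eta_\delta\in F^\delta$ with $I_{\mathrm{lin}}(\eta_\delta)\le r+\delta$. These lie in a compact sublevel set, so a subsequence converges. The limit belongs to $F$ because $d_T(\eta_\delta,F)<\delta$ and $F$ is closed. Lower semicontinuity then gives $I_{\mathrm{lin}}\le r$ at a point of $F$, a contradiction.

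\emph{Lower bound.} Let $O$ be open and fix $\eta\in O$ with $I_{\mathrm{lin}}(\eta)<\infty$. Choose $\delta>0$ with the ball $\{\|\cdot-\eta\|_T<2\delta\}\subseteq O$. Then $\eta\in O_{-\delta}$, so $I_{\mathrm{lin}}(O_{-\delta})\le I_{\mathrm{lin}}(\eta)$. For large $p$ the lower bound is therefore positive, and
\[
\liminf\frac{1}{h_p^2}\log\W\ge\liminf\frac{1+h_p^2}{h_p^2}\log\big(e^{-I_{\mathrm{lin}}(\eta)}-\epsilon_p/\delta\big)=-I_{\mathrm{lin}}(\eta).
\]
Optimizing over $\eta\in O$ gives the claim.

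The consequence \eqref{eq-app-moderate-continuity-set} then follows by sandwiching $\W(\cdots\in B)$ between the probabilities for $B^\circ$ and $\overline B$. Both limits equal $-I_{\mathrm{lin}}(B)$, which is finite by assumption, so the rate can be written with an $o(h_p^2)$ correction.

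The only step requiring care is the upper-bound limit $I_{\mathrm{lin}}(F^\delta)\to I_{\mathrm{lin}}(F)$. It relies on the goodness of $I_{\mathrm{lin}}$, which is obtained from Theorem~\ref{thm-app-finite-p-comparison}.
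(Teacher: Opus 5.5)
Your proposal is correct and follows the paper's proof essentially step for step. You use the same Lipschitz sandwich $\varphi_\delta^\pm$ in \eqref{eq-app-moderate-comparison}, take logarithms at fixed $\delta$, derive $I_{\mathrm{lin}}(F^\delta)\to I_{\mathrm{lin}}(F)$ from compact sublevel sets and lower semicontinuity, and sandwich between $B^\circ$ and $\overline B$. Your only deviation is cosmetic: you prove the open-set lower bound at a fixed $\eta\in O$ instead of through $I_{\mathrm{lin}}(O_{-\delta})\to I_{\mathrm{lin}}(O)$, and these amount to the same argument.
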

\begin{proof}
The bounds in \eqref{eq-app-moderate-events} are immediate when $B=\emptyset$. Otherwise, apply
\eqref{eq-app-moderate-comparison} to the Lipschitz approximations
$\varphi_\delta^-\le\1_B\le\varphi_\delta^+$ used in the proof of
Corollary~\ref{cor-app-event-neighborhoods}. They take values in
$[0,1]$, are $1/\delta$-Lipschitz, and satisfy
$\varphi_\delta^-=1$ on $B_{-\delta}$ and
$\varphi_\delta^+=0$ outside $B^\delta$. Hence, the corresponding
deterministic maxima are at least
$e^{-I_{\mathrm{lin}}(B_{-\delta})}$ and at most
$e^{-I_{\mathrm{lin}}(B^\delta)}$, respectively. This establishes~\eqref{eq-app-moderate-events}.

Now, take $h=h_p$ as described in the corollary. For every fixed $\delta>0$, the error in
\eqref{eq-app-moderate-events} tends to zero and
$(1+h_p^2)/h_p^2\to1$. If
$I_{\mathrm{lin}}(O_{-\delta})<\infty$ and
$I_{\mathrm{lin}}(F^\delta)<\infty$, taking logarithms in the lower
and upper bounds gives
\[
\begin{aligned}
\liminf_{p\to\infty}
\frac1{h_p^2}
\log\W\Big(\Xi^{(p,h_p)}\in O\Big)
&\ge
-I_{\mathrm{lin}}(O_{-\delta}),\\
\limsup_{p\to\infty}
\frac1{h_p^2}
\log\W\Big(\Xi^{(p,h_p)}\in F\Big)
&\le
-I_{\mathrm{lin}}(F^\delta).
\end{aligned}
\]
The lower bound is automatic when
$I_{\mathrm{lin}}(O_{-\delta})=\infty$. If
$I_{\mathrm{lin}}(F^\delta)=\infty$, the upper estimate in
\eqref{eq-app-moderate-events} reduces to its error term. Its logarithm
tends to $-\infty$, so the upper bound remains valid.

Since $O_{-\delta}\uparrow O$ as $\delta\downarrow0$, the definition of
$I_{\mathrm{lin}}(\, \cdot\, )$ implies
$I_{\mathrm{lin}}(O_{-\delta})\to I_{\mathrm{lin}}(O)$.
For closed $F$, the quantities $I_{\mathrm{lin}}(F^\delta)$ increase
to a limit not exceeding $I_{\mathrm{lin}}(F)$. If this limit is
finite, choose $\delta_n\downarrow0$ and
$\eta_n\in F^{\delta_n}$ such that
\[
I_{\mathrm{lin}}(\eta_n)
\le
I_{\mathrm{lin}}(F^{\delta_n})+\frac1n.
\]
Since $I_{\mathrm{lin}}(\, \cdot\, )$ is a good rate function, a subsequence
converges to some $\eta\in\Omega$. The relations
$d_T(\eta_n,F)<\delta_n$ and the closedness of $F$ imply
$\eta\in F$. Lower semicontinuity then shows that
$I_{\mathrm{lin}}(F)$ does not exceed the preceding limit. Consequently,
$I_{\mathrm{lin}}(F^\delta)\to I_{\mathrm{lin}}(F)$, including when
$I_{\mathrm{lin}}(F)=\infty$. Letting $\delta\downarrow0$ proves
\eqref{eq-app-moderate-ldp}.

Finally, if $I_{\mathrm{lin}}(B^\circ)=I_{\mathrm{lin}}(B)
=I_{\mathrm{lin}}(\overline B)<\infty$, applying the lower bound to $B^\circ$ and the upper bound to
$\overline B$ yields
\[
\frac1{h_p^2}
\log\W\Big(\Xi^{(p,h_p)}\in B\Big)
\to
-I_{\mathrm{lin}}(B), \quad p \to \infty.
\]
The definition of $\Xi^{(p,h_p)}$ shows that this is equivalent to
\eqref{eq-app-moderate-continuity-set}.
\end{proof}

\subsection{A stopped variational formula}
\label{subsec-app-stopped-functionals}
We consider functionals that depend only on the path up to a stopping time. Since controls applied after the stopping time cannot change their values, only the control cost accumulated up to that time enters the variational representation.

Recall that $(\cF_t^0)_{t\in[0,T]}$ denotes the raw filtration. 
In this subsection, let $\tau\colon\Omega\to[0,T]$ be an
$(\cF_t^0)_{t\in[0,T]}$-stopping time, and let $\cF_\tau^0$ denote the
corresponding $\sigma$-field. By Galmarino's test
\cite[Lemma~III.2.43]{JS}, every
$\cF_\tau^0$-measurable functional depends only on the path up to $\tau$.

\begin{theorem}\label{thm-app-stopped-functional}
Assume that \ref{A1}--\ref{A3} hold. Let
$\varphi\colon\Omega\to[0,\infty]$ be $\cF_\tau^0$-measurable. Then, for
every $p>1$ and $0<r<s<\infty$,
\begin{equation}\label{eq-app-stopped-multiplicative-identity}
\E^\W\Big[\varphi(Y^{(p)})^s\Big]^{r/s}
=
\sup_{a\in\A}
\E^\W\Big[
e^{
-\frac{r(p-1)}{2(s-r)}
\int_0^{\tau(Z^{(p,a)})}\|a_t\|^2\,dt
}
\varphi(Z^{(p,a)})^r
\Big].
\end{equation}
\end{theorem}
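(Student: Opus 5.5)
The plan is to reduce the identity to Theorem~\ref{theo-VF-SDE} in two moves. First, a rescaling of the diffusion coefficient, as in Subsection~\ref{subsec-app-moderate-deviations}, replaces the moment order $p$ by $q:=s/r$. Second, truncating the control at the stopping time converts the full control cost into the stopped cost. Throughout I fix $p>1$ and $0<r<s$, set $q:=s/r>1$, and set $\lambda:=\sqrt{(q-1)/(p-1)}>0$.

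\emph{Step 1 (rescaling).} Let $\bar\sigma:=\lambda\sigma$. The pair $(\mu,\bar\sigma)$ still satisfies \ref{A1}--\ref{A3}, and $\bar\sigma/\sqrt{q-1}=\sigma/\sqrt{p-1}$. Pathwise uniqueness therefore gives two identifications:
\begin{itemize}
\item the uncontrolled solution $\bar Y^{(q)}$ with coefficients $(\mu,\bar\sigma)$ equals $Y^{(p)}$;
\item the controlled solution $\bar Z^{(q,a)}$ equals $Z^{(p,\lambda a)}$.
\end{itemize}
I then apply Theorem~\ref{theo-VF-SDE} with exponent $q$ to the universally measurable functional $\varphi^r$ and substitute $b=\lambda a$. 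Since $\frac{1}{2\lambda^2}=\frac{p-1}{2(q-1)}=\frac{r(p-1)}{2(s-r)}$, this yields
\[
\E^\W\big[\varphi(Y^{(p)})^s\big]^{r/s}
=\sup_{b\in\A}\E^\W\Big[e^{-\kappa\int_0^T\|b_t\|^2\,dt}\,\varphi(Z^{(p,b)})^r\Big],
\qquad \kappa:=\frac{r(p-1)}{2(s-r)}.
\]
The map $a\mapsto\lambda a$ is a bijection of $\A$, so the supremum is unchanged. Call this identity $(\ast)$.

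\emph{Step 2 (inequality $\le$).} For every $b\in\A$, the stopped cost $\int_0^{\tau(Z^{(p,b)})}\|b_t\|^2\,dt$ is at most the full cost $\int_0^T\|b_t\|^2\,dt$. Hence each term on the right-hand side of \eqref{eq-app-stopped-multiplicative-identity} dominates the corresponding term in $(\ast)$, and the left-hand side is at most the stopped supremum.

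\emph{Step 3 (inequality $\ge$, truncation at $\tau$).} Fix $b\in\A$ and set $\rho:=\tau(Z^{(p,b)})$.
\begin{enumerate}
\item $\rho$ is an $(\cF^\W_t)$-stopping time. Indeed, $\{\tau\le t\}\in\cF^0_t$, and the stopped path $Z^{(p,b)}_{\cdot\wedge t}$ is $\cF^\W_t$-measurable, so $\{\rho\le t\}=\{Z^{(p,b)}_{\cdot\wedge t}\in\{\tau\le t\}\}\in\cF^\W_t$.
\item Consequently $b'_t:=b_t\1_{\{t\le\rho\}}$ belongs to $\A$.
\item By nonanticipativity of the coefficients and pathwise uniqueness up to a stopping time (localize with $\rho$ and use \cite[Theorem~14.30]{jacod79}), $Z^{(p,b')}=Z^{(p,b)}$ on $[0,\rho]$ almost surely.
\item Galmarino's test \cite[Lemma~III.2.43]{JS} then gives $\tau(Z^{(p,b')})=\tau(Z^{(p,b)})=\rho$. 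Since $\varphi$ is $\cF^0_\tau$-measurable, it also gives $\varphi(Z^{(p,b')})=\varphi(Z^{(p,b)})$.
\item The full cost of $b'$ is $\int_0^T\|b'_t\|^2\,dt=\int_0^\rho\|b_t\|^2\,dt$, which is exactly the stopped cost of $b$.
\end{enumerate}
So the $b$-term on the right-hand side of \eqref{eq-app-stopped-multiplicative-identity} equals the $b'$-term in $(\ast)$. That term is at most the left-hand side, which proves $\ge$.

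I expect the main obstacle to be the measure-theoretic part of Step 3: verifying that $\tau\circ Z^{(p,b)}$ is a stopping time for the completed filtration, and that pathwise uniqueness localizes to $[0,\rho]$ so that Galmarino's test applies to the two controlled paths. I would handle the latter by comparing both solutions with the solution driven by the control $b'$ and invoking uniqueness for the SDE with stopped coefficients. Infinite values of $\varphi$ cause no difficulty, since every step is an identity or a monotone comparison of non-negative quantities.
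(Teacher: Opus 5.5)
Your proposal is correct and follows the paper's proof. The paper likewise first obtains the full-cost identity $(\ast)$ from Theorem~\ref{theo-VF-SDE}; your rescaling $\bar\sigma=\lambda\sigma$ with exponent $s/r$ and the substitution $b=\lambda a$ spell out what the paper calls a ``straightforward application''. It then notes that one inequality is trivial because the stopped cost is smaller, and proves the other by replacing $a$ with $a_t\1_{\{t\le\tau(Z^{(p,a)})\}}$, using pathwise uniqueness and Galmarino's test to see that $\varphi(Z^{(p,a)})$ is unchanged. Your extra checks, that $\tau(Z^{(p,b)})$ is an $(\cF^\W_t)$-stopping time and that uniqueness localizes to $[0,\rho]$, fill in details the paper leaves implicit.
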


\begin{proof}
A straightforward application of Theorem~\ref{theo-VF-SDE} yields that 
\begin{align*}
    \E^\W\Big[\varphi(Y^{(p)})^s\Big]^{r/s}
=
\sup_{a\in\A}
\E^\W\Big[
e^{
-\frac{r(p-1)}{2(s-r)}
\int_0^{T}\|a_t\|^2\,dt
}
\varphi(Z^{(p,a)})^r
\Big].
\end{align*}
It therefore suffices to show that
\[
\sup_{a\in\A}
\E^\W\Big[
e^{
-\frac{r(p-1)}{2(s-r)}
\int_0^{T}\|a_t\|^2\,dt
}
\varphi(Z^{(p,a)})^r
\Big] \geq \sup_{a\in\A}
\E^\W\Big[
e^{
-\frac{r(p-1)}{2(s-r)}
\int_0^{\tau (Z^{(p, a)})}\|a_t\|^2\,dt
}
\varphi(Z^{(p,a)})^r
\Big], 
\]
as the converse inequality is trivial. To see this, fix an arbitrary \(a \in \mathcal{A}\) and set \(a'_s := a_s \1_{\{s \leq \tau (Z^{(p, a)})\}}\) for \(s \in [0, T]\). Using Galmarino's test \cite[Lemma~III.2.43]{JS} and the Lipschitz conditions \ref{A3}, that entail pathwise uniqueness, it follows that \(\W\)-a.s.
\(
\varphi (Z^{(p, a)}) = \varphi (Z^{(p, a')}).
\)
Hence, we get that
\begin{align*}
\E^\W\Big[
e^{
-\frac{r(p-1)}{2(s-r)}
\int_0^{\tau (Z^{(p, a)})}\|a_t\|^2\,dt
}
\varphi(Z^{(p,a)})^r
\Big] 
&=
\E^\W\Big[
e^{
-\frac{r(p-1)}{2(s-r)}
\int_0^{T}\|a'_t\|^2\,dt
}
\varphi(Z^{(p,a')})^r
\Big]
\\&\leq \sup_{b\in\A}
\E^\W\Big[
e^{
-\frac{r(p-1)}{2(s-r)}
\int_0^{T}\|b_t\|^2\,dt
}
\varphi(Z^{(p,b)})^r
\Big].
\end{align*}
This bound completes the proof. 
\end{proof}

\subsubsection*{Probability of hitting a set}

We now return to the stopped identity and specialize it to sets
$B\in\cF_\tau^0$. By changing the drift, a
control may increase the probability that $Z^{(p,a)}\in B$. Any such
increase comes at a cost, quantified by 
\[
\frac12
\int_0^{\tau(Z^{(p,a)})}\|a_t\|^2\,dt.
\]
The next corollary makes this trade-off precise for every $p>1$ and
every admissible control. The following Theorem~\ref{thm-app-two-scales} shows that
the resulting bound is asymptotically sharp.

\begin{corollary}\label{cor-app-stopped-event-comparison}
Assume that \ref{A1}--\ref{A3} hold and let $B\in\cF_\tau^0$. Then, for every
$p>1$, $a\in\A$, and $R\ge0$,
\begin{equation}\label{eq-app-stopped-event-bound}
\begin{aligned}
&\W\Big(
Z^{(p,a)}\in B,\;
\frac12\int_0^{\tau(Z^{(p,a)})}\|a_t\|^2\,dt\le R
\Big)\\
&\qquad\le
\exp\Big(
-(p-1)
\Big(
\sqrt{
-\tfrac1{p-1}
\log\W\big(Y^{(p)}\in B\big)
}
-\sqrt R
\Big)_+^2
\Big).
\end{aligned}
\end{equation}
\end{corollary}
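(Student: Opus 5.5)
We will derive the bound from the stopped identity in Theorem~\ref{thm-app-stopped-functional}, applied with $\varphi=\1_B$, which is $\cF_\tau^0$-measurable, and then optimize over the free exponents $0<r<s$. Set $P:=\W(Y^{(p)}\in B)$ and $E:=\{Z^{(p,a)}\in B,\ \frac12\int_0^{\tau(Z^{(p,a)})}\|a_t\|^2\,dt\le R\}$. If $P=0$, the right-hand side of \eqref{eq-app-stopped-event-bound} is $e^{-\infty}=0$. The identity with $r=1$ and any $s>1$ then gives $\W(Z^{(p,a)}\in B)=0$, so the bound holds. If $P=1$, or more generally if $J:=-\frac1{p-1}\log P\le R$, the right-hand side equals $1$ and there is nothing to prove. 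So we may assume $0<P<1$ and $\sqrt J>\sqrt R$.

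On $E$, the exponential factor in \eqref{eq-app-stopped-multiplicative-identity} is at least $\exp(-\frac{r(p-1)}{s-r}R)$. Dropping the supremum to the fixed control $a$ and restricting the expectation to $E$, we obtain for all $0<r<s$
\[
\W(E)\,e^{-\frac{r(p-1)}{s-r}R}\le P^{r/s}=e^{-(p-1)\frac rs J}.
\]
By homogeneity we take $r=1$ and write $s=1+\theta$ with $\theta>0$. This yields
\[
\W(E)\le \exp\Big(-(p-1)\Big[\frac{J}{1+\theta}-\frac R\theta\Big]\Big).
\]

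The last step is the elementary optimization of $g(\theta):=\frac{J}{1+\theta}-\frac R\theta$ over $\theta>0$. If $R=0$, letting $\theta\to\infty$ gives $\sup g=J$. If $R>0$ and $\sqrt J>\sqrt R$, the first-order condition $J\theta^2=R(1+\theta)^2$ gives $\theta=\sqrt R/(\sqrt J-\sqrt R)$, which is positive. Then $1+\theta=\sqrt J/(\sqrt J-\sqrt R)$, and substituting,
\[
g(\theta)=\sqrt J(\sqrt J-\sqrt R)-\sqrt R(\sqrt J-\sqrt R)=(\sqrt J-\sqrt R)^2.
\]
This is exactly the exponent $(\sqrt J-\sqrt R)_+^2$ in \eqref{eq-app-stopped-event-bound}.

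There is no serious obstacle in this argument. The only points needing care are the boundary cases $P\in\{0,1\}$ and $J\le R$, together with the conventions $-\log0=\infty$ and $e^{-\infty}=0$.
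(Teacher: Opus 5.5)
Your proof is correct and is the paper's argument up to reparametrization. The paper fixes $s=1$ and optimizes $\W(E)\le e^{\frac{r(p-1)R}{1-r}}\W(Y^{(p)}\in B)^r$ over $r\in(0,1)$, which is your bound with $r=1/(1+\theta)$ (for indicators only $r/s$ matters), and it uses $\sup_{0<r<1}\bigl(rx-\frac{rR}{1-r}\bigr)=(\sqrt x-\sqrt R)_+^2$ in place of your first-order computation. One slip: for $R=0$ you need $\theta\downarrow 0$, not $\theta\to\infty$, since $g(\theta)=J/(1+\theta)\to 0$ as $\theta\to\infty$.
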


\begin{proof}
Take \(r \in (0, 1)\).
Using Theorem~\ref{thm-app-stopped-functional} with \(s = 1\) and \(\varphi = \1_B\) yields that 
\begin{align*}
    \E^\W \Big[ e^{- \frac{r (p - 1)}{2 (1 - r)} \int_0^{\tau (Z^{(p, a)})} \| a_t\|^2 \, dt} \1_{\{ Z^{(p, a)} \in B \}} \Big] \leq \W ( Y^{(p)} \in B)^r.
\end{align*}
Using this bound, we immediately get 
\begin{align*}
    \W\Big( Z^{(p,a)}\in B,\; \frac12\int_0^{\tau(Z^{(p,a)})}\|a_t\|^2\,dt\le R \Big) &\leq e^{ \frac{r (p - 1) R}{1 - r}} \W (Y^{(p)} \in B)^r.
\end{align*}
If \(\W (Y^{(p)} \in B) = 0\), this estimate entails the claim. Otherwise, optimizing it over $r\in(0,1)$ and using the identity
$\sup_{0<r<1} (rx-\frac{rR}{1-r})
=(\sqrt{x}-\sqrt R)_+^2$, valid for every $x\ge0$, entails the claimed bound.
\end{proof}

The preceding corollary gives a finite-$p$ bound for each fixed control.
The next theorem combines it with the two-sided comparison in
Corollary~\ref{cor-app-event-neighborhoods} and shows that this bound is
asymptotically sharp after optimization over all controls \(a \in \mathcal{A}\).

\begin{theorem}\label{thm-app-two-scales}
Suppose that \ref{B1}--\ref{B2} hold. Let
$B\in\cF_\tau^0$ be a set that satisfies
$I(B^\circ)=I(B)=I(\overline B)<\infty$. Then, for every $R\ge0$, as
$p\to\infty$,
\begin{equation}\label{eq-app-two-scales}
\begin{aligned}
\sup_{a\in\A}
\W\Big(
&Z^{(p,a)}\in B,\;
\frac12\int_0^{\tau(Z^{(p,a)})}\|a_t\|^2\,dt\le R
\Big)\\
&=
\exp \Big(
-(p-1)
\Big[
\Big(\sqrt{I(B)}-\sqrt R\Big)_+^2+o(1)
\Big]
\Big).
\end{aligned}
\end{equation}
\end{theorem}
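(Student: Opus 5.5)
The plan is to prove the two inequalities hidden in the asymptotic identity separately. The upper bound comes from Corollary~\ref{cor-app-stopped-event-comparison} combined with the upper half of Corollary~\ref{cor-app-event-neighborhoods}. The lower bound comes from an explicit deterministic control that pays the budget $R$ and leaves the remaining cost to the noise. Throughout, write $J:=I(B)<\infty$ and $g(x):=(\sqrt{x}-\sqrt R)_+^2$ for $x\ge0$; $g$ is continuous and nondecreasing.

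\emph{Upper bound.} Fix $\delta>0$. The upper estimate in \eqref{eq-app-event-neighborhoods} gives
\[
-\tfrac1{p-1}\log\W(Y^{(p)}\in B)\ \ge\ -\tfrac{p}{p-1}\log\Big(e^{-I(B^\delta)}+\tfrac{K}{\delta\sqrt{p-1}}\Big).
\]
The right-hand side tends to $I(B^\delta)$ as $p\to\infty$, and to $+\infty$ if $I(B^\delta)=\infty$. The compactness argument from the proof of Corollary~\ref{cor-app-moderate-deviations} applies verbatim to the good rate function $I$ of Theorem~\ref{thm-app-finite-p-comparison} and shows $I(B^\delta)\uparrow I(\overline B)=J$ as $\delta\downarrow0$. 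Hence $-\frac1{p-1}\log\W(Y^{(p)}\in B)\ge J-o(1)$. Inserting this into \eqref{eq-app-stopped-event-bound}, which holds for every $a\in\A$, and using that $g$ is monotone and continuous, we obtain
\[
\sup_{a\in\A}\W(\cdots)\le\exp\bigl(-(p-1)[g(J)-o(1)]\bigr).
\]

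\emph{Lower bound.} Fix $\eta,\delta>0$. Since $I(B^\circ)=J$, choose $\omega^\ast\in B^\circ$ with $I(\omega^\ast)\le J+\eta$ such that the closed $2\delta$-ball around $\omega^\ast$ lies in $B$. By Step~2 of the proof of Theorem~\ref{thm-app-finite-p-comparison}, there is $b^\ast\in L^2$ with $z^{(b^\ast)}=\omega^\ast$ and $\frac12\|b^\ast\|_{L^2}^2=I(\omega^\ast)$. Replace $b^\ast$ by a bounded $b$ with $\|b\|_{L^2}\le\|b^\ast\|_{L^2}$ and $\|z^{(b)}-\omega^\ast\|_T\le\delta$; this is possible by truncation and the continuity argument of Step~1. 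Set $J':=\frac12\|b\|_{L^2}^2\le J+\eta$.

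Let $\theta:=\min\{1,\sqrt{R/J'}\}$ and use the deterministic control $a:=\theta b\in\A$. Its full cost is $\frac12\theta^2\|b\|^2_{L^2}\le R$, so the cost constraint holds surely, since the stopped cost is at most the full cost. Moreover, $Z^{(p,a)}$ is exactly the process $Y^{(p)}$ for the modified drift $\tilde\mu(t,\omega):=\mu(t,\omega)+\sigma(t,\omega)\theta b_t$. Because $b$ is bounded, $(\tilde\mu,\sigma)$ satisfies \ref{B1}--\ref{B2} with a larger constant, which is fixed as $p$ varies.

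Apply the lower estimate of Corollary~\ref{cor-app-event-neighborhoods} to $(\tilde\mu,\sigma)$ with rate function $\tilde I$. The path $z^{(b)}=\tilde z^{((1-\theta)b)}$ lies in $B_{-\delta}$, so $\tilde I(B_{-\delta})\le(1-\theta)^2J'=g(J')$. This yields
\[
\W(Z^{(p,a)}\in B)\ge\Big(e^{-g(J')}-\tfrac{\tilde K}{\delta\sqrt{p-1}}\Big)_+^{p}.
\]
Taking logarithms, this equals $\exp(-(p-1)[g(J')+o(1)])$. This also covers $\theta=1$, where the factor is $(1-O(p^{-1/2}))^p=e^{-o(p)}$. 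Letting $\eta\downarrow0$ gives $g(J')\to g(J)$ and completes the lower bound.

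\emph{Main obstacle.} The delicate step is the lower bound: recognising the controlled process as an uncontrolled SDE with modified drift, so that Corollary~\ref{cor-app-event-neighborhoods} applies. This requires $b$ bounded so that \ref{B2} holds, which forces the approximation of $b^\ast$, and the resulting constant $\tilde K$ depends on $b$. That dependence is harmless only because $\eta$ and $\delta$ are fixed before $p\to\infty$.
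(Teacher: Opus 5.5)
Your proposal is correct and follows essentially the paper's proof. The upper bound combines Corollary~\ref{cor-app-stopped-event-comparison} with the decay rate of $\W(Y^{(p)}\in B)$ obtained from Corollary~\ref{cor-app-event-neighborhoods} and $I(B^\delta)\to I(\overline B)$. The lower bound uses the scaled bounded deterministic control $\theta b$, read as an uncontrolled equation with drift $\mu+\theta\sigma b$, whose rate function gives $\tilde I(B_{-\delta})\le(\sqrt{J'}-\sqrt R)_+^2$. One slip: $\delta$ cannot be fixed in advance but must be chosen after $\omega^\ast\in B^\circ$, so that the closed $2\delta$-ball fits in $B$; this is harmless because $\delta$ is only held fixed while $p\to\infty$, and if $J'=0$ then $b=0$, so $a=0$ whatever $\theta$ is.
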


\begin{proof}
\smallskip
{\em Step 1.} We first determine the decay rate of the uncontrolled
probability. As in the proof of Corollary~\ref{cor-app-moderate-deviations}, we have $I(B_{-\delta})\to I(B^\circ)$ and \(I (B^\delta) \to I (\overline{B})\).
Using Corollary~\ref{cor-app-event-neighborhoods}, with $p\to\infty$ and then
$\delta\downarrow0$, shows that
$\W(Y^{(p)}\in B)^{1/p}\to\exp(-I(B))$. Since $I(B)<\infty$, this limit
is positive, and therefore
\begin{equation}\label{eq-app-event-rate-limit}
\W(Y^{(p)}\in B)
=
\exp \big(
-(p-1)[I(B)+o(1)]
\big).
\end{equation}

\smallskip
{\em Step 2.} Applying \eqref{eq-app-stopped-event-bound} and taking the
supremum over $a\in\A$ gives
\begin{align*}
\sup_{a\in\A}
\W\Big(
&Z^{(p,a)}\in B,\;
\frac12\int_0^{\tau(Z^{(p,a)})}\|a_t\|^2\,dt\le R
\Big)
\\&\leq
\exp \Big(
-(p-1)
\Big(
\sqrt{
-\tfrac1{p-1}\log\W(Y^{(p)}\in B)
}
-\sqrt R
\Big)_+^2
\Big).
\end{align*}
Using \eqref{eq-app-event-rate-limit} in this estimate proves the upper
bound in \eqref{eq-app-two-scales}.

\smallskip
{\em Step 3.} We prove the reverse bound first when $I(B)>0$. Fix
$\varepsilon>0$. By the definitions of $I(B^\circ)$ and $I(\, \cdot \,)$,
there is a control $\beta\in L^2([0,T];\bR^d)$ such that
$z^{(\beta)}\in B^\circ$ and
\[
\frac12\int_0^T\|\beta_t\|^2\,dt
<
I(B)+\frac{\varepsilon}{2}.
\]
Choose bounded Borel controls $b^n\to\beta$ in $L^2$. Step~1 in the
proof of Theorem~\ref{thm-app-finite-p-comparison} ensures that
$z^{(b^n)}\to z^{(\beta)}$ in $\Omega$, so there exist a bounded
control $b$ and $\delta>0$ such that
\[
z^{(b)}\in B_{-\delta},
\qquad
I(B)
\le
q:=\frac12\int_0^T\|b_t\|^2\,dt
\le
I(B)+\varepsilon.
\]
Since $q>0$, set $\gamma:=\min\{1,\sqrt{R/q}\}$ and $a:=\gamma b$.
Then, 
\begin{align} \label{eq: constraint}
\frac12
\int_0^{\tau(Z^{(p,a)})}\|a_t\|^2\,dt
\le
\gamma^2q
\le R.
\end{align} 
We now regard $Z^{(p,a)}$ as the uncontrolled diffusion \(Y^{(p)}\) with new drift
$(t,\eta)\mapsto \alpha_t (\eta) := \mu(t,\eta)+\gamma\sigma(t,\eta)b_t$ instead of the usual drift \(\mu\). Since $b$ is
bounded, the modified coefficients \((\alpha, \sigma)\) satisfy \ref{B1}--\ref{B2} with
constants independent of $p$. In the corresponding deterministic controlled problem, the control
$(1-\gamma)b$ gives
\[
\alpha(t,\eta)
+\sigma(t,\eta)(1-\gamma)b_t
=
\mu(t,\eta)+\sigma(t,\eta)b_t,
\]
which is precisely the drift defining $z^{(b)}\in B_{-\delta}$.
Hence, 
\[
I_{\alpha, \sigma} (B_{- \delta}) \leq I_{\alpha, \sigma} (z^{(b)}) \leq q(1-\gamma)^2 =  \big(\sqrt q-\sqrt R \big)_+^2, 
\]
where \(I_{\alpha, \sigma}\) denotes the rate function corresponding to the coefficients \((\alpha, \sigma)\).
The lower estimate in
Corollary~\ref{cor-app-event-neighborhoods} therefore yields a constant
$K_b\ge 1$, independent of $p$, such that
\[
\W(Z^{(p,a)}\in B)^{1/p}
\ge
\exp \Big(
-\Big(\sqrt q-\sqrt R\Big)_+^2
\Big)
-
\frac{K_b}{\delta\sqrt{p-1}}.
\]
The right-hand side
converges to
$e^{-(\sqrt q-\sqrt R)_+^2}$ when \(p \to \infty\) and is therefore positive for all sufficiently large $p$.
Using \eqref{eq: constraint}, we obtain 
\begin{align*}
   \frac{1}{p - 1} \log \sup_{\ell\in\A}
\W\Big(
Z^{(p,\ell)}&\in B,\;
\frac12\int_0^{\tau(Z^{(p,\ell)})}\|\ell_t\|^2\,dt\le R
\Big) 
\\&\geq \frac{p}{p - 1} \log \W ( Z^{(p, a)} \in B )^{1/p}
\\&\geq 
 \frac{p}{p - 1} \log \Big( \exp \Big( 
- \Big(\sqrt q-\sqrt R\Big)_+^2
\Big)
- 
\frac{K_b}{\delta\sqrt{p-1}} \Big) 
\\&\geq 
 \frac{p}{p - 1} \log \Big( \exp \Big( 
- \Big(\sqrt{ I (B) + \varepsilon}-\sqrt R\Big)_+^2
\Big)
- 
\frac{K_b}{\delta\sqrt{p-1}} \Big). 
\end{align*}
Letting \(p \to \infty\) and afterwards sending \(\varepsilon \downarrow 0\) yields the exponential lower bound in
\eqref{eq-app-two-scales}.

If $I(B)=0$, the same bound follows directly from the zero control,
since $Z^{(p,0)}=Y^{(p)}$ and \eqref{eq-app-event-rate-limit} applies.
\end{proof}

\section{Proof of Theorem~\ref{theo-VF}} \label{sec: pf VF}

This section is dedicated to the proof of our main Theorem~\ref{theo-VF}. It is sliced into two pieces: in the first we establish a general fact about certain convex expectations on path space, namely that they are determined by their finite-dimensional distributions. This result is based on duality methods and extends a result from \cite{CriensKupper2025} to a framework for convex expectations that only preserve positive constants. We believe that this result is of independent interest. In the second, we establish Theorem~\ref{theo-VF}, combining the convex duality methods with viscosity theory for Hamilton--Jacobi--Bellman PDEs. 

\subsection{Convex Duality and Comparison}

In this section, let \((\Omega, \cF)\) be an arbitrary Polish space with its Borel \(\sigma\)-field and denote the space of all real-valued bounded upper semianalytic functions on \((\Omega, \cF)\) by \(\USA_b (\Omega)\). 

\begin{definition} \label{def: PCP expectation}
    We call a map \(\cE \colon \USA_b (\Omega) \to \bR\) a {\em positive-constant-preserving (PCP) convex expectation} if it satisfies the following properties:
    \begin{enumerate}
        \item[(E1)] \(\cE (c) = c\) for all \(c \in \bR_+\).
        \item[(E2)] \(\cE (\varphi) \leq \cE (\psi)\) for all \(\varphi, \psi \in \USA_b (\Omega)\) with \(\varphi \leq \psi\).
        \item[(E3)] \(\cE (\lambda \varphi + (1 - \lambda) \psi) \leq \lambda \cE (\varphi) + (1 - \lambda) \cE (\psi)\) for all \(\lambda \in [0, 1]\) and \(\varphi, \psi \in \USA_b (\Omega)\). 
        \item[(E4)] \(\cE\) is continuous from below, i.e., for every \(\varphi \in \USA_b (\Omega)\) and any sequence \((\varphi_n)_{n = 1}^\infty \subset \USA_b (\Omega)\) with \(\varphi_n \uparrow \varphi\) it holds that \(\cE (\varphi_n) \uparrow \cE (\varphi)\).
        \item[(E5)] \(\cE\) is continuous from above on \(C_b (\Omega)\), i.e., for every \(\varphi \in \USA_b (\Omega)\) and any sequence \((\varphi_n)_{n = 1}^\infty \subset C_b (\Omega)\) with \(\varphi_n \downarrow \varphi\) it holds that \(\cE (\varphi_n) \downarrow \cE (\varphi)\).
    \end{enumerate}
\end{definition}

Convex expectations admit a dual representation via penalty functions. Let \(\textit{ca}^+ (\Omega)\) be the set of all finite measures on \((\Omega, \cF)\) and endow it with the weak topology. Furthermore, denote its closed subset of sub-probability measures by \(\textit{ca}^+_{\leq 1} (\Omega) = \{ \Q \in \textit{ca}^+ (\Omega) \colon \Q (\Omega) \leq 1 \}\).

\begin{definition}
    A map \(\alpha \colon \textit{ca}^+_{\leq 1} (\Omega) \to [0, \infty]\) is called a {\em penalty function} if it satisfies the following properties:
        \begin{enumerate}
        \item[(P1)] The map \(\Q \mapsto \alpha (\Q)\) is convex.
        \item[(P2)] The level sets \(\{ \Q \in \textit{ca}^+_{\leq 1} (\Omega) \colon \alpha (\Q) \leq c \}\), \(c \in \bR_+\), are compact.
        \item[(P3)] \(\inf_{\Q \in \textit{ca}^+_{\leq 1} (\Omega)} \alpha (\Q) = 0\).
    \end{enumerate}
\end{definition}

\begin{theorem} \label{theo: convex duality; representation}
Let \(\cE\) be a PCP convex expectation. Then, there exists a penalty function \(\alpha\) such that, for all \(\varphi \in \USA_b (\Omega)\),
    \[
    \cE (\varphi) = \sup_{\Q \in \textit{ca}^+_{\leq 1} (\Omega)} \Big( \E^\Q [ \varphi ] - \alpha (\Q) \Big),
    \]
and the penalty function \(\alpha\) admits the dual representation
\[
\alpha (\Q) = \sup_{f \in C_b (\Omega)} \Big( \E^\Q [ f ] - \cE (f) \Big), \quad \Q \in \textit{ca}^+_{\leq 1} (\Omega).
\]
\end{theorem}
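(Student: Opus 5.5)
The plan is to follow the standard route to duality for convex risk measures on Polish spaces (as in \cite{BartlCheriditoKupper2019}), adapted to positive-constant preservation. The main change is that the dual objects are sub-probability measures instead of probability measures.

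\emph{Step 1: the restriction to $C_b(\Omega)$.} First consider $\cE$ on $C_b(\Omega)$. Define
\[
\alpha(\Q):=\sup_{f\in C_b(\Omega)}\big(\E^\Q[f]-\cE(f)\big), \qquad \Q\in\textit{ca}^+_{\leq 1}(\Omega).
\]
This is a supremum of affine, weakly continuous functions, so $\alpha$ is convex and weakly lower semicontinuous, which gives (P1). Taking $f=0$ and using (E1) shows $\alpha\ge0$.

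To get the representation on $C_b(\Omega)$, I would use the continuity from above (E5) on $C_b(\Omega)$ together with a Daniell--Stone / Hahn--Banach argument. For $f\in C_b(\Omega)$, separate $f$ from the convex set $\{g\in C_b(\Omega)\colon \cE(g)<\cE(f)\}$ in the sup-norm topology. This yields a positive linear functional $\ell$ with $\ell(f)-\cE(f)\ge \ell(g)-\cE(g)$ for all $g$.
\begin{itemize}
\item Positivity of $\ell$ follows from monotonicity (E2).
\item The bound $\ell(1)\le 1$ comes from (E1) applied to $c\cdot 1$ for $c\to\infty$. Since $\ell(c)-c\le \ell(f)-\cE(f)$ for all $c\ge0$, we get $\ell(1)\le 1$. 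Only nonnegative constants are preserved, so there is no lower bound on $\ell(1)$. This is exactly why sub-probability measures appear.
\item Condition (E5) makes $\ell$ $\sigma$-additive (Daniell), so $\ell=\E^\Q$ for some $\Q\in\textit{ca}^+_{\le1}$.
\end{itemize}
Hence $\cE(f)=\max_\Q(\E^\Q[f]-\alpha(\Q))$ on $C_b(\Omega)$.

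\emph{Step 2: compact level sets.} Next show (P2), which I expect to be the main technical obstacle. Weak closedness follows from lower semicontinuity, so what remains is tightness of $\{\alpha\le c\}$ (boundedness of mass by $1$ is automatic). Fix $\epsilon>0$. Choose compacts $K_n$ and continuous $f_n\downarrow 0$ with $f_n=0$ on $K_n$, $f_n=1$ off a neighbourhood, and $f_n\downarrow \1_{\Omega\setminus K}$ pointwise for a suitable exhaustion. By (E5) and (E1), $\cE(\lambda f_n)\downarrow$ a value near $\cE(0)=0$ for each fixed $\lambda>0$. Then
\[
\lambda\,\Q(K_n^c)\le \E^\Q[\lambda f_n]\le \alpha(\Q)+\cE(\lambda f_n)\le c+\cE(\lambda f_n),
\]
so with $\lambda=2c/\epsilon$ and $n$ large we get $\Q(K_n^c)\le\epsilon$ uniformly. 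The standard inner-regularity argument on Polish spaces (countable dense set, balls) provides such $f_n$ from (E5). Prokhorov's theorem for finite measures then gives compactness. Property (P3) follows from Step 1 with $f=0$: the maximum is attained, so some $\Q$ has $\alpha(\Q)=-\cE(0)+\E^\Q[0]=0$.

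\emph{Step 3: extension to $\USA_b(\Omega)$.} Finally, extend the representation from $C_b(\Omega)$ to $\USA_b(\Omega)$. Both $\cE$ and $\varphi\mapsto\sup_\Q(\E^\Q[\varphi]-\alpha(\Q))$ are continuous from below (the latter by monotone convergence and interchange of suprema) and continuous from above on $C_b$. For the dual side, continuity from above uses compactness of the level sets via a minimax/Dini argument. Both are Choquet capacities in the sense used in \cite{BartlCheriditoKupper2019}, so by their capacitability result they agree first on upper semicontinuous bounded functions (decreasing limits of $C_b$) and then on all bounded upper semianalytic functions, using Choquet's capacitability theorem applied to level sets. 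The constant-preservation only on $\bR_+$ causes no trouble here, since $\varphi$ can be shifted to be nonnegative only at the level of monotone limits, which never require (E1).
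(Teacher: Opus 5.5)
Your outline is sound, but it takes a longer route than the paper. You essentially re-derive the duality theorem of \cite{BartlCheriditoKupper2019}: subgradient plus Daniell on $C_b(\Omega)$, tightness of level sets from (E5), and capacitability to pass to $\USA_b(\Omega)$. You build the constraint $\Q(\Omega)\le1$ into the Hahn--Banach step via $\ell(1)\le1$.

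The paper instead applies \cite[Theorem~2.2, Remark~2.4]{BartlCheriditoKupper2019} as a black box. That result needs only (E2)--(E5) and already gives $\cE(\varphi)=\sup_{\Q\in\textit{ca}^+(\Omega)}(\E^\Q[\varphi]-\alpha(\Q))$ on $\USA_b(\Omega)$, with $\alpha$ convex and with compact level sets. Property (E1) enters only afterwards:
\begin{itemize}
\item $\alpha(\Q)\ge\E^\Q[c]-\cE(c)=(\Q(\Omega)-1)c$ for every $c\ge0$ forces $\alpha=\infty$ off $\textit{ca}^+_{\leq1}(\Omega)$;
\item $c=0$ gives $\alpha\ge0$;
\item $\cE(1)=1$ gives $\inf\alpha=0$.
\end{itemize}
Your inequality $\ell(c)-c\le\ell(f)-\cE(f)$ is exactly the same mechanism, so the new content coincides. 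The paper's route is two lines. Yours makes the $C_b$-part and the tightness explicit and shows directly why sub-probabilities appear. However, it still relies on the same reference for the extension to upper semianalytic functions.

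If you write your version out, two steps need repair.

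In Step~1, separating $f$ from the strict sublevel set $\{\cE<\cE(f)\}$ does not give $\ell(f)-\cE(f)\ge\ell(g)-\cE(g)$. Instead, observe that $\cE$ is sup-norm continuous on $C_b(\Omega)$: for $\|g-f\|_\infty\le\delta$ one has $\cE(f-\delta)\le\cE(g)\le\cE(f+\delta)$, and $t\mapsto\cE(f+t)$ is a finite convex function on $\bR$. Then take a subgradient at $f$. Daniell continuity follows from $\ell(h_n)\le\cE(f+h_n)-\cE(f)\to0$ for $C_b(\Omega)\ni h_n\downarrow0$, by (E5).

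In Step~2, functions $f_n\downarrow0$ equal to $1$ off a neighbourhood of a compact $K_n$ only control the mass outside that neighbourhood, which does not yield tightness. The working construction is as follows.
\begin{itemize}
\item Fix a complete metric and a dense sequence $(x_i)$.
\item For each $k$, take continuous $h^k_n\downarrow0$ (as $n\to\infty$) dominating the indicator of the complement of $\bigcup_{i\le n}B(x_i,1/k)$.
\item Choose $n_k$ with $\Q\big((\bigcup_{i\le n_k}B(x_i,1/k))^c\big)\le\epsilon2^{-k}$ uniformly on $\{\alpha\le c\}$.
\item Use that $\bigcap_k\bigcup_{i\le n_k}\overline{B}(x_i,1/k)$ is closed and totally bounded, hence compact.
\end{itemize}

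A smaller point on Step~3: (E5) gives continuity from above only along $C_b$-sequences. Continuity of $\cE$ along decreasing upper semicontinuous sequences, which the capacity property requires, follows only after you identify $\cE$ with the dual functional on upper semicontinuous functions. So the order you indicate, usc functions first, is essential.
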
 

\begin{proof}
By \cite[Theorem~2.2, Remark~2.4]{BartlCheriditoKupper2019}, using the properties (E2)-(E5), 
\[
\cE (\varphi) = \sup_{\Q \in \textit{ca}^+ (\Omega)} \Big( \E^{\Q} [ \varphi ] - \alpha (\Q) \Big), 
\]
with
\[
\alpha (\Q) := \sup_{f \in C_b (\Omega)} \Big( \E^\Q [ f ] - \cE (f) \Big). 
\]
Furthermore, the function \(\alpha\) has compact level sets and it is convex as supremum over affine functions. 
Using (E1), for every \(\Q \in \textit{ca}^+ (\Omega)\) with \(\Q (\Omega) > 1\), and \(c \in \bR_+\), we observe that
\(\alpha (\Q) \geq (\Q (\Omega) - 1) c\), 
which implies that \(\alpha (\Q) = \infty\). Hence, as \(\cE (\varphi)\) is real-valued, 
\[
\cE (\varphi) = \sup_{\Q \in \textit{ca}^+ (\Omega)} \Big( \E^{\Q} [ \varphi ] - \alpha (\Q) \Big) = \sup_{\Q \in \textit{ca}^+_{\leq 1} (\Omega)} \Big( \E^{\Q} [ \varphi ] - \alpha (\Q) \Big).
\]
Using (E1) with \(c = 0\) yields \(\alpha \geq 0\). Furthermore, again with (E1),
\[
1 = \cE (1) = \sup_{\Q \in \textit{ca}^+_{\leq 1} (\Omega)} \Big( \Q (\Omega) - \alpha (\Q) \Big) \leq 1 - \inf_{\Q \in \textit{ca}^+_{\leq 1} (\Omega)} \alpha (\Q), 
\]
which, as \(\alpha \geq 0\), implies that \(\inf_{\Q \in \textit{ca}^+_{\leq 1}(\Omega)} \alpha (\Q) = 0\).
In summary, \(\alpha\) is a penalty function and the proof is complete.
\end{proof}

This dual representation allows us to show that PCP convex expectations on path spaces are determined by their finite dimensional distributions. An abstract version of this result is provided by the next theorem. 

\begin{theorem} \label{theo: comparison} 
Let \(D \subset C_b (\Omega)\) be an \(\bR\)-vector space with \(1 \in D\) that separates \(\textit{ca}^+_{\leq 1} (\Omega)\).\footnote{That is, for any \(\Q, \Q' \in \textit{ca}^+_{\leq 1} (\Omega)\), \(\Q = \Q'\) if and only if \(\E^\Q [ \varphi ] = \E^{\Q'} [ \varphi ]\) for all \(\varphi \in D\).}
For any PCP convex expectations \(\cE\) and \(\widehat{\cE}\), the following are equivalent:
\begin{enumerate}
    \item[(a)] \(\cE \leq \widehat{\cE}\) on \(\USA_b(\Omega)\).
    \item[(b)] \(\cE \leq \widehat{\cE}\) on \(D\). 
\end{enumerate}
\end{theorem}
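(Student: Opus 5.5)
(a)$\Rightarrow$(b) is immediate, since $D\subset C_b(\Omega)\subset\USA_b(\Omega)$. For (b)$\Rightarrow$(a) we compare penalty functions via Theorem~\ref{theo: convex duality; representation}. Let $\alpha$ and $\widehat\alpha$ be the penalties of $\cE$ and $\widehat\cE$. Since
\[
\cE(\varphi)=\sup_{\Q\in\textit{ca}^+_{\le1}(\Omega)}\big(\E^\Q[\varphi]-\alpha(\Q)\big),
\]
and similarly for $\widehat\cE$, it suffices to show $\widehat\alpha\le\alpha$ on $\textit{ca}^+_{\le1}(\Omega)$. Then $\cE(\varphi)\le\widehat\cE(\varphi)$ for every $\varphi\in\USA_b(\Omega)$.

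The key step is that the penalties are already determined by testing against $D$:
\[
\alpha(\Q)=\sup_{f\in D}\big(\E^\Q[f]-\cE(f)\big)=:\alpha_D(\Q),
\]
and the same holds for $\widehat\alpha$. Granting this, (b) gives $\E^\Q[f]-\widehat\cE(f)\le\E^\Q[f]-\cE(f)$ for every $f\in D$, hence $\widehat\alpha=\widehat\alpha_D\le\alpha_D=\alpha$.

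To prove $\alpha=\alpha_D$, note first that $\alpha_D\le\alpha$ trivially. For the converse, I would argue via the convex function $\alpha_D$ on $\textit{ca}^+_{\le1}(\Omega)$, equipped with the weak topology generated by $D$. Each map $\Q\mapsto\E^\Q[f]-\cE(f)$ with $f\in D$ is continuous, so $\alpha_D$ is convex and lower semicontinuous for this weaker topology.

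I then define $\widetilde\cE(\varphi):=\sup_{\Q}(\E^\Q[\varphi]-\alpha_D(\Q))$. For $f\in D$, the inequality $\widetilde\cE(f)\le\cE(f)$ follows from the definition of $\alpha_D$. For the reverse inequality $\widetilde\cE(f)\ge\cE(f)$ I would use Fenchel--Moreau on the dual pair $(D,\on{span}\,\textit{ca}^+_{\le1})$. Extend $\cE|_D$ to a convex, finite, monotone functional on $D$; it is continuous from above on $D$ by (E5). Its conjugate, restricted to positive functionals, is $\alpha_D$; positivity comes from monotonicity and total mass $\le1$ comes from (E1). The biconjugate then recovers $\cE$ on $D$, provided the supremum can be taken over measures rather than merely over positive linear functionals on $D$.

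\textbf{Main obstacle.} The hard part is this Daniell--Stone-type step: showing that positive linear functionals on $D$ which appear in the conjugate, and whose conjugate value is finite, are represented by sub-probability measures. My plan is to reuse the argument of \cite[Theorem~2.2]{BartlCheriditoKupper2019}: continuity from above (E5) on $C_b$, together with the tightness supplied by the compact level sets (P2), yields $\sigma$-additivity. Once $\alpha$ and $\alpha_D$ agree on $D$-evaluations, the separation property upgrades this to identity of measures.

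A cleaner route avoids $\widetilde\cE$ altogether. Both $\cE$ and the restriction $\cE|_{C_b}$ are determined by the compact-level-set penalty $\alpha$. So it is enough to show that, for $\Q$ with $\alpha(\Q)>\alpha_D(\Q)$, strict separation of $\Q$ from the compact convex level set $\{\alpha\le c\}$ is possible by some $f\in D$. On compact sets, a point-separating vector space $D$ containing $1$ induces the same (Hausdorff, hence equal to the compact weak) topology; this is the crucial use of the separation hypothesis. Hahn--Banach in the locally convex space given by $D$-duality then produces $f\in D$ with $\E^\Q[f]-\cE(f)>\alpha_D(\Q)$, a contradiction.

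Finally, with $\widehat\alpha\le\alpha$ established, the dual representations give (a).
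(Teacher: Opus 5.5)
Your reduction is the paper's: (b)$\Rightarrow$(a) follows from $\widehat\alpha\le\alpha$, which in turn follows from the identity $\alpha(\Q)=\sup_{f\in D}(\E^\Q[f]-\cE(f))$. That identity is exactly Lemma~\ref{lem: penality identity}, whose inner infimum equals $\E^\Q[\varphi]-\cE(\varphi)$ for $\varphi\in D$.

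You differ in how you prove the identity. The paper truncates to $D^N$ and to the compact level sets $\{\alpha\le 2N+1\}$. It then exchanges $\sup$ and $\inf$ by Fan's minimax theorem, extracts a weakly convergent sequence of near-minimizers, and uses $D^N=ND^1$ together with the separation property to identify the limit with $\Q$.

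Your second route is more conceptual, and it works. Since $D$ separates $\textit{ca}^+_{\le1}(\Omega)$, it separates all finite signed measures (normalize the Jordan parts). Hence $\sigma(\textit{ca}(\Omega),D)$ is a Hausdorff locally convex topology with dual $D$. The weakly compact level sets of $\alpha$ are compact, hence closed, in this coarser topology. So $\alpha$, extended by $+\infty$, is proper, convex and $\sigma(\textit{ca}(\Omega),D)$-lower semicontinuous. Fenchel--Moreau then gives $\alpha=\alpha^{**}$, where $\alpha^*(f)=\cE(f)$ for $f\in D$ by Theorem~\ref{theo: convex duality; representation}; that is, $\alpha=\alpha_D$.

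This is shorter than the minimax-plus-subsequence argument. In exchange, the paper's version only uses Fan's theorem on compact sets and never needs a topology on signed measures.

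Two points need correcting.

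First, the Hahn--Banach step must separate the point $(\Q,c)$, with $\alpha_D(\Q)<c<\alpha(\Q)$, from the epigraph of $\alpha$; equivalently, invoke Fenchel--Moreau. Separating $\Q$ from the level set $\{\alpha\le c\}$ is not enough. An $f\in D$ with $\E^\Q[f]>\sup_{\alpha(\Q')\le c}\E^{\Q'}[f]$ gives no control over the $\Q'$ with $\alpha(\Q')>c$. Those $\Q'$ also enter $\cE(f)=\sup_{\Q'}(\E^{\Q'}[f]-\alpha(\Q'))$, so you do not get $\E^\Q[f]-\cE(f)>\alpha_D(\Q)$. What separation from level sets does give you is lower semicontinuity, which is the hypothesis of Fenchel--Moreau.

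Second, discard the first route. The inequality $\widetilde\cE\ge\cE$ holds everywhere simply because $\alpha_D\le\alpha$. So $\widetilde\cE=\cE$ on $D$ is automatic and says nothing about $\alpha$ versus $\alpha_D$. The Daniell--Stone ``obstacle'' appears only because you dualize $\cE|_D$ rather than $\alpha$. With $\textit{ca}(\Omega)$ as the primal space and $D$ as the dual space, no representation of functionals by measures is needed.
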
 

\begin{remark} \label{rem: determining set}
    In case \(\Omega = C ([0, T]; E)\) for a Polish space \(E\), a typical choice for \(D\) is given by 
    \[D = \{\omega \mapsto \varphi (\omega (t_1), \dots, \omega (t_n)) \colon n \in \mathbb{N}, \, \varphi \in \textit{Lip}_b (E^n), \, 0 \leq t_1 < \dots < t_n \leq T \};\]
    see, e.g., \cite[Theorem~13.11]{Klenke_20} for the separation property. 
\end{remark}

The proof is based on the following lemma, which is proved below the proof of Theorem~\ref{theo: comparison}.

\begin{lemma} \label{lem: penality identity}
    Let \(D\) be as in Theorem~\ref{theo: comparison} and let \(\alpha\) be a penalty function. Then, for all \(\Q \in \textit{ca}^+_{\leq 1} (\Omega)\), 
    \[
    \alpha (\Q) = \sup_{\varphi \in D} \inf_{\Q' \in \textit{ca}^+_{\leq 1} (\Omega)} \Big( \E^{\Q} [ \varphi ] - \E^{\Q'} [ \varphi ] + \alpha (\Q') \Big). 
    \]
\end{lemma}

\begin{proof}[Proof of Theorem~\ref{theo: comparison}]
Clearly, we only have to prove the implication (b) \(\implies\) (a). Let \(\alpha\) and \(\widehat{\alpha}\) be the penalty functions associated to \(\cE\) and \(\widehat{\cE}\) through Theorem~\ref{theo: convex duality; representation}, respectively. It is clear that \(\cE \leq \widehat{\cE}\) follows from \(\widehat{\alpha} \leq \alpha\). Using Lemma~\ref{lem: penality identity} and the assumption (b), we get, for every \(\Q \in \textit{ca}^+_{\leq 1} (\Omega)\), that
\begin{align*}
    \widehat{\alpha} (\Q) 
    &= \sup_{\varphi \in D} \Big( \E^{\Q} [ \varphi ] - \widehat{\cE} (\varphi) \Big)
    \leq \sup_{\varphi \in D} \Big( \E^{\Q} [ \varphi ] - \cE (\varphi) \Big) = \alpha (\Q). 
\end{align*}
This entails (a) and therefore completes the proof. 
\end{proof}

\begin{proof}[Proof of Lemma~\ref{lem: penality identity}]
As the inequality \(\geq\) is trivial, we only have to establish the \(\leq\) part, assuming that
\[
\sup_{\varphi \in D} \inf_{\Q' \in \textit{ca}^+_{\leq 1} (\Omega)} \Big( \E^{\Q} [ \varphi ] - \E^{\Q'} [ \varphi ] + \alpha (\Q') \Big) =: c < \infty.
\]
For \(N \in \mathbb{N}\), set \(D^N := \{ \varphi \in D \colon \|\varphi\|_\infty \leq N \}\) and \(\mathcal{P}_N := \{ \Q' \in \textit{ca}^+_{\leq 1} (\Omega) \colon \alpha (\Q') \leq 2N + 1 \}\). 
For a function \(\varphi \in D^N\), using the property (P3), it follows readily that
\begin{align*} 
\inf_{\Q' \in \textit{ca}^+_{\leq 1} (\Omega)} \Big( \alpha (\Q') - \E^{\Q'} [ \varphi ] \Big) = \inf_{\Q' \in \mathcal{P}_N} \Big( \alpha (\Q              ') - \E^{\Q'} [ \varphi ] \Big).
\end{align*} 
By this identity, we get that 
\begin{align*} 
c = \liminf_{N \to \infty} \sup_{\varphi \in D^N} \inf_{\Q' \in \cP_N} \Big( \E^{\Q} [ \varphi ] - \E^{\Q'} [ \varphi ] + \alpha (\Q') \Big).
\end{align*}
Property (P2) implies that \(\cP_{N}\) is compact and that
\(
\Q' \mapsto - \E^{\Q'} [ \varphi ] + \alpha (\Q')
\)
is lower semicontinuous. Furthermore, it is convex by property (P1). As 
\(
\varphi \mapsto \E^{\Q} [ \varphi ] - \E^{\Q'} [ \varphi ] 
\)
is concave, Fan's minimax theorem \cite[Theorem~2]{fan} yields that
\begin{align*} 
\sup_{\varphi \in D^N} \inf_{\Q' \in \cP_N} & \Big( \E^{\Q} [ \varphi ] - \E^{\Q'} [ \varphi ] + \alpha (\Q') \Big)
\\&= \inf_{\Q' \in \cP_N} \sup_{\varphi \in D^N} \Big( \E^{\Q} [ \varphi ] - \E^{\Q'} [ \varphi ] + \alpha (\Q') \Big).
\end{align*}
Take a measure \(\Q^N \in \cP_N\) such that 
\begin{align*}
    \inf_{\Q' \in \cP_N} \sup_{\varphi \in D^N} &\Big( \E^{\Q} [ \varphi ] - \E^{\Q'} [ \varphi ] + \alpha (\Q') \Big) 
    \\&\geq \sup_{\varphi \in D^N} \Big( \E^{\Q} [ \varphi ] - \E^{\Q^N} [ \varphi ] + \alpha (\Q^N) \Big) - \frac{1}{N}
\end{align*}
Passing to a suitable subsequence, we conclude that 
\begin{align*}
    c \geq \lim_{n \to \infty} \sup_{\varphi \in D^{N_n}} \Big( \E^{\Q} [ \varphi ] - \E^{\Q^{N_n}} [ \varphi ] + \alpha (\Q^{N_n}) \Big).
\end{align*}
Without loss of generality, we may assume that for all \(n \in \mathbb{N}\) 
\[
\sup_{\varphi \in D^{N_n}} \Big( \E^{\Q} [ \varphi ] - \E^{\Q^{N_n}} [ \varphi ] \Big) + \alpha (\Q^{N_n})  \leq c + 1.
\]
Hence, as 
\[
0 \in D^{N_n} \implies \sup_{\varphi \in D^{N_n}} \Big( \E^{\Q} [ \varphi ] - \E^{\Q^{N_n}} [ \varphi ] \Big) \geq 0, 
\]
we conclude that \(\alpha (\Q^{N_n}) \leq c + 1\). Thus, these measures belong to the compact set
$\{\Q'\in\textit{ca}^+_{\leq1}(\Omega)
\mathrel{\colon}\alpha(\Q')\le c+1\}$.
Passing to a subsequence if necessary, they converge weakly to a
measure $\Q^*$ in this set.
Therefore, using that \(\alpha\) is lower semicontinuous (which follows from (P2)), we obtain that 
\begin{align}
c &\geq \limsup_{n \to \infty} \sup_{\varphi \in D^{N_n}} \Big( \E^{\Q} [ \varphi ] - \E^{\Q^{N_n}} [ \varphi ] \Big) + \alpha (\Q^*) \label{eq: last step}
\\&= \limsup_{n \to \infty} \sup_{\varphi \in D^{1}} N_n \Big( \E^{\Q} [ \varphi ] - \E^{\Q^{N_n}} [ \varphi ] \Big) + \alpha (\Q^*). \nonumber
\end{align} 
Here, we use that \(N D^1 = D^N\), as \(D\) is a vector space.
This implies that 
\[
\lim_{n \to \infty} \sup_{\varphi \in D^{1}} \Big( \E^{\Q} [ \varphi ] - \E^{\Q^{N_n}} [ \varphi ] \Big) = 0.
\]
In particular, for all \(\varphi \in D^1\), we have 
\[
\E^{\Q} [ \varphi ] = \lim_{n \to \infty} \E^{\Q^{N_n}}[ \varphi ] = \E^{\Q^*} [ \varphi ], 
\]
where we use that \(\varphi \in D^1\) implies that also \(- \varphi \in D^1\).
As the set \(D^1\) is separating for \(\textit{ca}^+_{\leq 1} (\Omega)\), we conclude that \(\Q = \Q^*\).
Finally, we get from \eqref{eq: last step} that 
\(
\alpha (\Q) = \alpha (\Q^*) \leq c, 
\)
which completes the proof.
\end{proof}

\subsection{Connecting Theorems~\ref{theo-VF} and \ref{theo: comparison}}

We now connect the variational formula~\eqref{eq-main-VF} to the concept of PCP expectations from Definition~\ref{def: PCP expectation}. From now on, \((\Omega, \cF)\) is as in Section~\ref{sec-variational-formulas}, i.e., the space \(C ([0, T]; \bR^d)\) with its Borel \(\sigma\)-field.
For \(\varphi \in \USA_b (\Omega)\), we set 
\begin{equation} \label{eq: main PCP}
\begin{split}
    \mathcal{E} (\varphi) &:= \E^{\W} \big[ \varphi^+ (X)^p \big]^{1/p}, \\
    \widehat{\cE} (\varphi) &:= \sup_{a \in \A} \E^{\W} \Big[ e^{- \frac{1}{2} \int_0^T \| a_s \|^2 \, ds } \, \varphi^+ \Big (X + \sqrt{p - 1} \int_0^\cdot a_s \, ds \Big) \Big],
\end{split}
\end{equation} 
where \(\varphi^+ := \max \{ \varphi, 0\}\).

\begin{lemma} \label{lem: main are PCP}
    Both \(\cE\) and \(\widehat{\cE}\) are PCP expectations (in the sense of Definition~\ref{def: PCP expectation}).
\end{lemma}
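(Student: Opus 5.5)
We verify (E1)--(E5) separately for \(\cE\) and \(\widehat\cE\). First, both maps are well defined and real-valued on \(\USA_b(\Omega)\). Upper semianalytic functions are universally measurable, so \(\varphi^+\) is as well. Composing with the universally measurable map \(X+\sqrt{p-1}\int_0^\cdot a_s\,ds\) keeps measurability by \cite[Proposition~7.44]{bershre}. Both values lie in \([0,\|\varphi\|_\infty]\), since the exponential factor is at most one.

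For \(\cE\), the properties are classical. (E1) and (E2) are immediate. (E3) follows because \(\varphi\mapsto\varphi^+\) is convex and the \(L^p\)-norm is monotone on non-negative functions and convex by Minkowski's inequality. Both (E4) and (E5) follow from monotone or dominated convergence: \(\varphi_n\uparrow\varphi\) implies \((\varphi_n^+)^p\uparrow(\varphi^+)^p\), and decreasing bounded sequences converge by dominated convergence.

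For \(\widehat\cE\), the key facts are as follows.
\begin{itemize}
\item (E1): for \(c\ge0\) the supremum equals \(c\sup_a\E^\W[e^{-\frac12\int_0^T\|a_s\|^2ds}]=c\), attained at \(a\equiv0\).
\item (E2): each controlled expectation is monotone in \(\varphi\), hence so is the supremum.
\item (E3): for a fixed control, the map is convex in \(\varphi\) by convexity of \(\varphi\mapsto\varphi^+\) and linearity of the expectation. A supremum of convex functionals is convex.
\item (E4): a supremum of functionals that are each continuous from below (by monotone convergence) is again continuous from below. Indeed, \(\sup_n\sup_a=\sup_a\sup_n\).
\end{itemize}

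The main obstacle is (E5), continuity from above, since a supremum of functionals continuous from above need not be continuous from above. Here I plan to use the fixed-control estimate announced in the introduction, Lemma~\ref{lem: geq inequality in main}. For every \(a\in\A\) and every non-negative bounded \(\psi\), it gives
\[
\E^\W\Big[e^{-\frac12\int_0^T\|a_s\|^2ds}\psi\Big(X+\sqrt{p-1}\int_0^\cdot a_s\,ds\Big)\Big]\le \E^\W[\psi(X)^p]^{1/p}.
\]
It is obtained by localization, Girsanov's theorem and H\"older's inequality with exponents \(p\) and \(p/(p-1)\). The Girsanov density moment is exactly compensated by the factor \(e^{-\frac12\int\|a\|^2}\) because of the scaling \(\sqrt{p-1}\). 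Given \(\varphi_n\downarrow\varphi\) with \(\varphi_n\in C_b\), set \(\psi_n:=\varphi_n^+-\varphi^+\ge0\). Then \(\psi_n\downarrow0\) pointwise and the \(\psi_n\) are uniformly bounded. Sublinearity of the supremum gives
\[
\widehat\cE(\varphi)\le\widehat\cE(\varphi_n)\le \widehat\cE(\varphi)+\sup_{a}\E^\W\big[e^{-\frac12\int\|a\|^2}\psi_n(X+\sqrt{p-1}\textstyle\int_0^\cdot a_s\,ds)\big].
\]
By the fixed-control estimate, the last term is bounded by \(\E^\W[\psi_n(X)^p]^{1/p}\), which tends to \(0\) by dominated convergence. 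This proves (E5), and in fact continuity from above for arbitrary bounded decreasing sequences. The only delicate points are justifying the fixed-control estimate for unbounded \(a\), via stopping at \(\inf\{t:\int_0^t\|a\|^2\ge n\}\) together with Fatou's lemma, and the measurability of the shifted functional, both handled as above.
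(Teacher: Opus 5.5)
Your proposal is correct and follows the paper's proof essentially line by line. The steps match: routine checks of (E1)--(E4) for both functionals, then (E5) for \(\widehat{\cE}\) by bounding \(\widehat{\cE}(\varphi_n)-\widehat{\cE}(\varphi)\) via subadditivity of the supremum, applying the fixed-control estimate of Lemma~\ref{lem: geq inequality in main} to \(\varphi_n^+-\varphi^+\), and concluding with dominated convergence. One small imprecision: the shifted path \(X+\sqrt{p-1}\int_0^\cdot a_s\,ds\) is only \(\cF^\W\)-measurable, not universally measurable, so \cite[Proposition~7.44]{bershre} does not apply verbatim; the composition with \(\varphi^+\) is still \(\cF^\W\)-measurable, because null sets of its image law pull back to \(\W\)-null sets.
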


The only non-obvious part in the proof of Lemma~\ref{lem: main are PCP} is the property (E5) for \(\widehat{\cE}\). We prepare its proof with the following general lemma, which is of interest in its own. Namely, besides establishing (E5), it already gives a quite easy proof for the inequality \(\geq\) from the variational representation \eqref{eq-main-VF}. Moreover, we use it to establish Corollary~\ref{coro-arbitrary-setup}.

\begin{lemma} \label{lem: geq inequality in main}
    Let $(\widetilde{\Omega},\widetilde{\cF},(\widetilde{\cF}_t)_{t \in [0, T]},\widetilde{\P})$ be a filtered probability space with completed filtration that supports a \(d\)-dimensional $(\widetilde{\cF}_t)_{t \in [0, T]}$-Brownian motion $\widetilde{W} = (\widetilde{W}_t)_{t \in [0, T]}$. Furthermore, let \((a_t)_{t \in [0, T]}\) be an \(\bR^d\)-valued \((\widetilde{\cF}_t)_{t \in [0, T]}\)-progressively measurable process such that \(\widetilde{\P}\)-a.s.
    \[
    \int_0^T \| a_s \|^2 \, ds < \infty. 
    \]
    Then, for every non-negative universally measurable function \(\varphi \colon \Omega \to \bR_+\) and every \(p \geq 1\), 
    \[
    \E^{\widetilde{\P}} \Big[ e^{- \frac{1}{2} \int_0^T \|a_s\|^2 \,ds} \, \varphi \Big( \widetilde{W} + \sqrt{p - 1}\, \int_0^\cdot a_s \, ds\Big) \Big] \leq \E^{\W} \Big[ \varphi (X)^p \Big]^{1/p}.
    \]
\end{lemma}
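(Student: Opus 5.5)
For $p=1$ the claim is immediate: the shift vanishes, the exponential factor is at most one, and $\E^{\widetilde\P}[\varphi(\widetilde W)]=\E^\W[\varphi(X)]$ because $\widetilde W$ has law $\W$ (universal measurability makes both sides well defined). So fix $p>1$, set $\beta:=\sqrt{p-1}$, and let $U:=\widetilde W+\beta\int_0^\cdot a_s\,ds$. The plan is a local Girsanov change of measure followed by Hölder's inequality with exponents $p$ and $p'=p/(p-1)$. The density is tuned so that the exponential cost exactly absorbs the $L^{p'}$-norm of the Radon--Nikodym derivative.

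\emph{Localization.} Let $\rho_n:=\inf\{t\colon\int_0^t\|a_s\|^2ds\ge n\}\wedge T$ and $a^{(n)}:=a\1_{[0,\rho_n]}$. On $\{\rho_n=T\}$ we have $a^{(n)}=a$, and $\{\rho_n=T\}\uparrow\widetilde\Omega$ up to a null set. Write $\Lambda:=\int_0^T\|a_s\|^2ds$, $\Lambda_n:=\int_0^T\|a^{(n)}_s\|^2ds$, and $U^{(n)}:=\widetilde W+\beta\int_0^\cdot a^{(n)}_sds$. Monotone convergence gives
\[
\E\big[e^{-\Lambda/2}\varphi(U)\big]=\lim_n\E\big[e^{-\Lambda/2}\varphi(U)\1_{\{\rho_n=T\}}\big]\le\liminf_n\E\big[e^{-\Lambda_n/2}\varphi(U^{(n)})\big].
\]
It therefore suffices to treat controls with $\Lambda\le n$.

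\emph{Girsanov step for bounded-energy $a$.} Define $d\Q/d\widetilde\P=\cE_T:=\exp(-\beta\int_0^Ta\,d\widetilde W-\tfrac{\beta^2}{2}\Lambda)$. This is a true martingale by Novikov, since $\Lambda\le n$. Under $\Q$ the process $U$ is a Brownian motion, so $\E^\Q[g(U)]=\E^\W[g(X)]$ for every non-negative universally measurable $g$; here one uses that the law of $U$ under $\Q$ is $\W$, and that the $\W$-completion handles universal measurability. Then
\[
\E\big[e^{-\Lambda/2}\varphi(U)\big]=\E^\Q\big[e^{-\Lambda/2}\cE_T^{-1}\varphi(U)\big]\le\E^\Q[\varphi(U)^p]^{1/p}\,\E^\Q\big[(e^{-\Lambda/2}\cE_T^{-1})^{p'}\big]^{1/p'}.
\]
The first factor equals $\E^\W[\varphi(X)^p]^{1/p}$.

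\emph{The key computation.} This is where the choice $\beta=\sqrt{p-1}$ is used. Write $\E^\Q[(e^{-\Lambda/2}\cE_T^{-1})^{p'}]=\E[e^{-p'\Lambda/2}\cE_T^{1-p'}]$. Since $1-p'=-1/(p-1)$,
\[
\cE_T^{1-p'}=\exp\Big(\tfrac{\beta}{p-1}\int_0^Ta\,d\widetilde W+\tfrac{\beta^2}{2(p-1)}\Lambda\Big)=\exp\Big(\gamma\int_0^Ta\,d\widetilde W+\tfrac12\Lambda\Big),\qquad \gamma:=(p-1)^{-1/2}.
\]
Multiplying by $e^{-p'\Lambda/2}$ gives the exponent $\gamma\int a\,d\widetilde W+\tfrac12(1-p')\Lambda$. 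Because $1-p'=-\gamma^2$, this equals $\gamma\int a\,d\widetilde W-\tfrac{\gamma^2}{2}\Lambda$, which is again a Doléans exponential. It has expectation one by Novikov, so the second factor equals $1$ and the desired bound holds for each localized control.

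The main obstacle is bookkeeping rather than an idea. One must verify that the exponents cancel exactly, as above. One must also justify $\E^\Q[\varphi(U)^p]=\E^\W[\varphi(X)^p]$ for merely universally measurable $\varphi$, including the value $+\infty$; for this, replace $\varphi$ by a Borel $\psi$ with $\psi=\varphi$ $\W$-a.s., noting that $U$ has law $\W$ under $\Q$ and that $\Q\sim\widetilde\P$. Finally, the continuous version of $\int a\,ds$ must be taken as the path entering $\varphi$.
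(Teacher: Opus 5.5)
Your proposal is correct and follows essentially the same route as the paper: localization at the energy level $\int_0^t\|a_s\|^2\,ds\ge n$, a Girsanov change of measure with density $\exp\bigl(-\sqrt{p-1}\int_0^T a_s\,d\widetilde W_s-\frac{p-1}{2}\int_0^T\|a_s\|^2\,ds\bigr)$, and H\"older's inequality with exponents $p$ and $p/(p-1)$, where the conjugate factor is a Dol\'eans exponential with unit expectation. The only differences are cosmetic: you evaluate that factor under $\widetilde{\P}$ rather than under the new measure, and you truncate the control itself rather than inserting the indicator $\1_{\{\tau_m\ge T\}}$.
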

\begin{proof}
As the case \(p=1\) is trivial, from now on, we assume that \(p>1\).
Moreover, fix \(m \in \mathbb{N}\) and define the \((\widetilde{\cF}_t)_{t \in [0, T]}\)-stopping time 
  \[
\tau_m := \inf \Big \{ t \in [0, T] \colon \int_0^t \| a_s \|^2 \, ds \geq m \Big\}, 
\]
where \(\inf \emptyset := \infty\).
By Novikov's condition (\cite[Proposition~3.5.12]{KaratzasShreve1991}), the process
\[
Z^m := \exp \Big(- \int_0^{\cdot \wedge \tau_m} \sqrt{p - 1} a_s \, d \widetilde{W}_s - \frac{(p - 1)}{2} \int_0^{\cdot \wedge \tau_m} \| a_s \|^2 \, ds \Big)
\]
is a \(\widetilde{\P}\)-martingale and consequently, we may define a probability measure \(\widetilde{\Q}_m\) on \((\widetilde{\Omega}, \widetilde{\cF})\) through the Radon--Nikodym derivative 
\[
\frac{d \widetilde{\Q}_m}{d \widetilde{\P}} = Z^m_T. 
\]
By Girsanov's theorem (\cite[Theorem~3.5.1]{KaratzasShreve1991}), the process \(\widetilde{W} + \sqrt{p - 1}  \int_0^{\cdot \wedge \tau_m} a_s \, ds\) is a \(\widetilde{\Q}_m\)-Brownian motion. Using the monotone convergence theorem, we get that 
\begin{align*}
    \E^{\widetilde{\P}}\Big[ &e^{- \frac{1}{2} \int_0^T \|a_s\|^2 \, ds} \varphi \Big( \widetilde{W} + \sqrt{p - 1} \, \int_0^\cdot a_s \, ds \Big) \Big] 
    \\&= \lim_{m \to \infty} \E^{\widetilde{\P}}\Big[ e^{- \frac{1}{2} \int_0^T \|a_s\|^2 \, ds} \varphi \Big( \widetilde{W} + \sqrt{p - 1} \, \int_0^\cdot a_s \, ds \Big) \1_{\{\tau_m \geq T \}} \Big].
\end{align*}
Moreover, by H\"older's inequality,
\begin{align*}
       \E^{\widetilde{\P}} &\Big[ e^{- \frac{1}{2} \int_0^{T \wedge \tau_m} \|a_s\|^2 \, ds} \varphi \Big( \widetilde{W} + \sqrt{p - 1} \, \int_0^\cdot a_s \, ds \Big) \1_{\{\tau_m \geq T \}} \Big] 
    \\&= \E^{\widetilde{\Q}_m}\Big[ e^{\int_0^{T \wedge \tau_m} \sqrt{p - 1} a_s (d\widetilde{W}_s + \sqrt{p - 1} a_s \, ds) - \frac{p}{2} \int_0^{T \wedge \tau_m} \|a_s\|^2 \, ds} \varphi \Big( \widetilde{W} + \sqrt{p - 1} \, \int_0^\cdot a_s \, ds \Big) \1_{\{\tau_m \geq T \}}\Big] 
    \\&\leq \E^{\widetilde{\Q}_m}\Big[ e^{\frac{p}{\sqrt{p - 1}} \, \int_0^{T \wedge \tau_m} a_s (d \widetilde{W}_s + \sqrt{p - 1} a_s \, ds) - \frac{p^2}{2 (p - 1)} \int_0^{T \wedge \tau_m} \|a_s\|^2 \, ds} \Big]^{(p - 1)/ p} \\&\hspace{5cm}\E^{\widetilde{\Q}_m} \Big[ \varphi \Big( \widetilde{W} + \sqrt{p - 1} \, \int_0^\cdot a_s \, ds \Big)^p \1_{\{\tau_m \geq T \}} \Big]^{1/p}
    \\&\leq \E^{\widetilde{\Q}_m} \Big[ \varphi \Big( \widetilde{W} + \sqrt{p - 1} \, \int_0^{\cdot \wedge \tau_m} a_s \, ds \Big)^p \Big]^{1/p},
\end{align*}
where we used that
\[
\exp \Big(\frac{p}{\sqrt{p - 1}} \, \int_0^{\cdot \wedge \tau_m} a_s \big(d \widetilde{W}_s + \sqrt{p - 1} a_s \, ds\big) - \frac{p^2}{2 (p - 1)} \int_0^{\cdot \wedge \tau_m} \|a_s\|^2 \, ds \Big)
\]
is a \(\widetilde{\Q}_m\)-martingale by Novikov's condition.
Summing up, using once again that \(\widetilde{W} + \sqrt{p - 1} \int_0^{\cdot \wedge \tau_m} a_s \, ds\) is a \(\widetilde{\Q}_m\)-Brownian motion, we conclude that 
\begin{align*}
    \E^{\widetilde{\P}}\Big[ &e^{- \frac{1}{2} \int_0^T \|a_s\|^2 \, ds} \varphi \Big( \widetilde{W} + \sqrt{p - 1} \, \int_0^\cdot a_s \, ds \Big) \Big] 
    \\&\leq  \limsup_{m \to \infty} \E^{\widetilde{\Q}_m} \Big[ \varphi \Big( \widetilde{W} + \sqrt{p - 1} \, \int_0^{\cdot \wedge \tau_m} a_s \, ds \Big)^p \Big]^{1/p}
    = \E^{\W} \big[ \varphi (X)^p \big]^{1/p}. \qedhere
\end{align*} 
\end{proof}

\begin{proof}[Proof of Lemma~\ref{lem: main are PCP}]
    We start with \(\cE\). Obviously, (E1) and (E2) hold. Property (E3) follows from the convexity of \(x \mapsto x^+\) and the Minkowski inequality. Finally, (E4) and (E5) are consequences of the monotone/dominated convergence theorems. 

We turn to \(\widehat{\cE}\). In this case (E2) is obvious and (E3) follows from the convexity of \(x \mapsto x^+\). Property (E1) is due to the fact that 
\[
\sup_{a \in \mathcal{A}} \E^{\W} \Big[ e^{- \frac{1}{2} \int_0^T \| a_s \|^2 \, d s} \Big] = 1.
\]
The continuity property (E4) follows from the monotone convergence theorem, switching two suprema. 
Finally, we discuss (E5). Let \(\varphi \in \USA_b (\Omega)\) and take a sequence \((\varphi_n)_{n = 1}^\infty \subset C_b (\Omega)\) such that \(\varphi_n \downarrow \varphi\). 
In view of Lemma~\ref{lem: geq inequality in main}, we obtain that 
\begin{align*}
    \big| \widehat{\cE} (\varphi_n) - \widehat{\cE} (\varphi) \big| &= \widehat{\cE} (\varphi_n) - \widehat{\cE} (\varphi)
    \leq \widehat{\cE} (\varphi_n - \varphi)
    \leq \E^{\W} \big[ (\varphi_n - \varphi)^p \big]^{1/p} \to 0, \quad n \to \infty, 
\end{align*}
by the dominated convergence theorem (using that \(0 \leq \varphi^+_n - \varphi^+ \leq \varphi_n - \varphi \leq \|\varphi_1\|_\infty + \|\varphi\|_\infty\)). This shows that \(\widehat{\cE}\) satisfies the property (E5). \end{proof}

\subsection{Proof of Theorem~\ref{theo-VF}} \label{sec: pf VF relaxed}
As the case \(p = 1\) is trivial, we assume \(p > 1\) in the following.

\smallskip 
{\em Step 1:} 
We first explain that it suffices to prove the variational formula for all non-negative \(\varphi \in \USA_b (\Omega)\). The reduction to bounded functions follows from the monotone convergence theorem. To reduce the claim to upper semianalytic functions, assume that \eqref{eq-main-VF} holds for all non-negative bounded upper semianalytic functions and take a non-negative bounded universally measurable function \(\varphi \colon \Omega \to \bR_+\). In view of \cite[Lemma~7.27]{bershre}, there exists a non-negative bounded \(\cF\)-measurable function \(\psi \colon \Omega \to \bR_+\) such that \(\W\)-a.s. \(\varphi = \psi\). As Borel functions are upper semianalytic, the variational formula holds for \(\psi\) by assumption, and we obtain 
\[
\E^\W \big[ \varphi (X)^p \big]^{1/p} = \E^\W \big[ \psi (X)^p \big]^{1/p} = \sup_{a \in \mathcal{A}} \E^\W \Big[ e^{- \frac{1}{2} \int_0^T \|a_s\|^2 \, ds} \psi \Big( X + \sqrt{p - 1} \int_0^\cdot a_s \, ds \Big) \Big]. 
\]
In view of \cite[Theorem~7.2]{LS_01}, for every \(a \in \mathcal{A}\), \(\W \circ (X + \sqrt{p - 1}\,\int_0^\cdot a_s \, ds)^{-1} \ll \W\) and consequently, \(\W\)-a.s. 
\[
\varphi \Big( X + \sqrt{p - 1} \int_0^\cdot a_s \, ds \Big) = \psi \Big( X + \sqrt{p - 1}\int_0^\cdot a_s \, ds \Big). 
\]
This entails that 
\begin{align*}
\sup_{a \in \mathcal{A}} \E^\W &\Big[ e^{- \frac{1}{2} \int_0^T \|a_s\|^2 \, ds} \psi \Big( X + \sqrt{p - 1} \int_0^\cdot a_s \, ds \Big) \Big] 
\\&= \sup_{a \in \mathcal{A}} \E^\W \Big[ e^{- \frac{1}{2} \int_0^T \|a_s\|^2 \, ds} \varphi \Big( X + \sqrt{p - 1} \int_0^\cdot a_s \, ds \Big) \Big], 
\end{align*}
showing that the variational formula also holds for \(\varphi\).

{\em Step 2:} We now reduce the problem further, showing that it suffices to consider finite-dimensional marginals. 
Let \(\cE\) and \(\widehat{\cE}\) be as in \eqref{eq: main PCP} and recall from Lemma~\ref{lem: main are PCP} that both are PCP expectations. 
Thus, by virtue of the comparison Theorem~\ref{theo: comparison}, to establish the variational formula~\eqref{eq-main-VF} for all non-negative \(\varphi \in \USA_b (\Omega)\), it suffices to establish it for all non-negative finite-dimensional distributions from Remark~\ref{rem: determining set}. In fact, using the continuity from above property (E5), we may even assume that the test functions are bounded away from zero. More precisely, it suffices to prove \eqref{eq-main-VF} for 
\[
\omega \mapsto \varphi (\omega) \equiv f (\omega (t_1), \dots, \omega (t_n)), 
\]
where \(n \in \mathbb{N}\), \(f \in \textit{Lip}_b (\bR^{dn})\) with \(\inf f > 0\), and \(0 \leq t_1 < \dots < t_n \leq T\). We denote the class of these test functions by \(D^+\).

{\em Step 3:} We now prove \eqref{eq-main-VF} for all finite-dimensional marginals. 
For \(x \in \bR^d, t \in [0, T]\) and all non-negative \(f \in \textit{Lip}_b (\bR^d)\), we set 
\begin{align*}
S_t [f] (x) &:= \cE ( f (x + X_t) ), \qquad
\widehat{S}_t [f] (x) := \widehat{\cE} ( f (x + X_t) ).
\end{align*}
For \(n \geq 2\), all non-negative \(f \in \textit{Lip}_b (\bR^{dn})\) and \(0 \leq t_1 < \dots < t_n \leq T\), it holds that
\begin{align} \label{eq: fundamental reduction identity 1}
\cE (f (X_{t_1}, \dots, X_{t_n}) ) &= \cE \Big( S_{t_n - t_{n - 1}} [f (x_1, \dots, x_{n - 1}, \cdot \,)] (x_{n - 1}) \, \big|_{x_1 = X_{t_1}, \dots, x_{n - 1} = X_{t_{n - 1}}} \Big).  
\end{align}
To ease our presentation, we prove this identity only for \(n = 2\). Starting with the right-hand side and using the Markov property of Brownian motion, we obtain that
\begin{align*} 
\cE \big( S_{t_2 - t_1} [ f (x_1, \cdot \,)] (x_1) \, \big|_{x_1 = X_{t_1}} \big) 
&= \E^{\W} \Big[ \E^{\W} [ f (x_1, x_1 + X_{t_2 - t_1})^p] \, \big|_{x_1 = X_{t_1}} \Big]^{1 / p}
\\&= \E^{\W} \Big[ \E^{\W} \big[ f (X_{t_1}, X_{t_2})^p \mid \cF_{t_1} \big] \Big]^{1/p} 
= \cE ( f (X_{t_1}, X_{t_2}) ). \phantom \int
\end{align*} 
Thanks to Lemma~\ref{lem: coincide} and Corollary~\ref{coro: for proof}, a similar claim holds for \(\widehat{\cE}\), namely
\begin{align} \label{eq: fundamental reduction identity 2}
\widehat{\cE} (f (X_{t_1}, \dots, X_{t_n}) ) &= \widehat{\cE} \Big( \widehat{S}_{t_n - t_{n - 1}} [f (x_1, \dots, x_{n - 1}, \cdot \,)] (x_{n - 1}) \, \big|_{x_1 = X_{t_1}, \dots, x_{n - 1} = X_{t_{n - 1}}} \Big).  
\end{align}
By induction, \eqref{eq: fundamental reduction identity 1} and \eqref{eq: fundamental reduction identity 2}, the property \(\cE = \widehat{\cE}\) on \(D^+\) follows once we prove the following two properties:
\begin{enumerate} 
\item[(a)] For all non-negative \(f \in \textit{Lip}_b (\bR^{dn})\), the map \((x_1, \dots, x_{n - 1}) \mapsto S_t [ f (x_1, \dots, x_{n-1}, \cdot \,)] (x_{n - 1})\) belongs to \(\textit{Lip}_b (\bR^{d (n - 1)})\).
\item[(b)] \(S_t [ f ] (x) = \widehat{S}_t [ f ] (x)\) for all \(x \in \bR^d, t \in [0, T],\) and \(f \in \textit{Lip}_b (\bR^d)\) with \(\inf f > 0\).
\end{enumerate} 
Before we show that these items hold, let us shortly outline the induction argument. The induction base holds by (b). Now, assume as induction hypothesis that 
\[\cE = \widehat{\cE} \text{ on } \big\{ f (X_{t_1}, \dots, X_{t_{n - 1}}) \colon 0 \leq t_1 < \dots < t_{n - 1} \leq T, \, f \in \textit{Lip}_b(\bR^{d(n - 1)}) \text{ with } \inf f > 0 \big\}
\]
for a given \(n \geq 2\). Then, for every \(f \in \textit{Lip}_b (\bR^{dn})\) with \(\inf f > 0\) and \(0 \leq t_1 < \dots < t_{n} \leq T\), using \eqref{eq: fundamental reduction identity 1} for the first identity, the induction hypothesis and (a) for the second identity, (b) for the third identity, and \eqref{eq: fundamental reduction identity 2} for the last identity, we obtain that
\begin{align*}
    \cE \big( f (X_{t_1}, \dots, X_{t_{n}}) \big) &= \cE \Big( S_{t_n - t_{n - 1}} [f (x_1, \dots, x_{n - 1}, \cdot \,)] (x_{n - 1}) \, \big|_{x_1 = X_{t_1}, \dots, x_{n - 1} = X_{t_{n - 1}}} \Big)
    \\&= \widehat{\cE} \Big( S_{t_n - t_{n - 1}} [f (x_1, \dots, x_{n - 1}, \cdot \,)] (x_{n - 1}) \, \big|_{x_1 = X_{t_1}, \dots, x_{n - 1} = X_{t_{n - 1}}} \Big)
    \\&= \widehat{\cE} \Big( \widehat{S}_{t_n - t_{n - 1}} [f (x_1, \dots, x_{n - 1}, \cdot \,)] (x_{n - 1}) \, \big|_{x_1 = X_{t_1}, \dots, x_{n - 1} = X_{t_{n - 1}}} \Big)
    \\&= \widehat{\cE} \big( f (X_{t_1}, \dots, X_{t_{n}}) \big).
\end{align*}
Hence, to conclude \(\cE = \widehat{\cE}\) it suffices to show that (a) and (b) hold.

We start with (a), restricting ourselves to the case \(n = 2\) for simplicity.
Using the \(1\)-Lipschitz continuity of the \(L^p\)-norm, we obtain that 
\begin{align*}
    \big| S_t [ f (x_1, \cdot \,)] (x_2) &- S_t [ f (y_1, \cdot \,)] (y_2) \big| 
    \leq L \big( \|x_1 - y_1\| + \|x_2 - y_2\| \big), 
\end{align*}
where \(L\) denotes the Lipschitz constant of \(f\).
For (b), take \(f \in \textit{Lip}_b (\bR^{d})\) with \(\inf f > 0\).
The function
\[
u (s, x) := \E^{\W} \big[ f (x + X_{t - s})^p \big], \quad (s, x) \in [0, t] \times \bR^d, 
\]
is a (classical) solution to the backward heat equation (see \cite[Section~4.3]{KaratzasShreve1991}). In particular, we notice that \(u\) is uniformly bounded away from zero by \((\inf f )^p\). Using the transformation \(y \mapsto y^{1 / p}\), and standard rules on a change of variable for viscosity solutions (see, e.g., \cite[Proposition~6.7]{T_13} for a result in this direction), we obtain that \(v := u^{1/p}\) is a viscosity solution to the nonlinear PDE
\begin{align*}
    - \frac{\partial v}{\partial t} - \frac{1}{2} (p - 1) \frac{\|Dv\|^2}{v} - \frac{1}{2} \on{tr} \big[ D^2 v \big] = 0.
\end{align*}
Noting that 
\[
\frac{1}{2} (p - 1) \frac{\|Dv\|^2}{v} = \sup_{a \in \bR^d} \Big[ \sqrt{p - 1} \langle Dv, a \rangle - \frac{1}{2} v \| a \|^2  \Big], 
\]
with the unique expression $a^*=\sqrt{p - 1}\,Dv/v=\sqrt{p - 1}\,D\log v$ that maximizes the right hand side, 
we conclude that \(v\) is a viscosity solution to the Hamilton--Jacobi--Bellman PDE 
\begin{align} \label{eq: viscosity}
    - \frac{\partial v}{\partial t} - \sup_{a \in \bR^d} \Big[ \sqrt{p - 1} \langle D v, a \rangle - \frac{1}{2} v \| a \|^2 \Big] - \frac{1}{2} \on{tr} \big[ D^2 v \big] = 0.
\end{align}
Consider the function
\[
\widehat{v} (s, x) := \widehat{S}_{t - s} [ f ] (x), \quad (s, x) \in [0, t] \times \bR^d. 
\]
We now show that \(\widehat{v}\) is continuous. If \(L\) denotes the Lipschitz constant of \(f\), we obtain, for all \(s, u \in [0, t]\) and \(x, y \in \bR^d\), that
\begin{align*}
    \big| \widehat{v} (s&, x) - \widehat{v} (u, y) \big| 
    \\&\leq \sup_{a \in \mathcal{A}} \E^\W \Big[ e^{- \frac{1}{2} \int_0^T \| a_z \|^2 \, dz} \Big| f \Big(x + X_{t - s} + \sqrt{p - 1} \, \int_0^{t - s} a_z \, dz \Big) \\&\hspace{4cm}- f \Big( y + X_{t - u} + \sqrt{p - 1}\, \int_0^{t - u} a_z \, dz \Big) \Big| \Big]
    \\&\leq L \Big( \|x - y\| + \E^\W \Big[ \| X_{t - s} - X_{t - u} \| \Big] 
    \\&\hspace{4cm}+ \sqrt{p - 1}\,\sup_{a \in \mathcal{A}} \E^\W \Big[ e^{- \frac{1}{2} \int_0^T \|a_z\|^2 \, dz} \Big\| \int_{t - (s \vee u)}^{t - (s \wedge u)} a_z \, dz \Big\| \Big] \Big)
    \\&\leq L \Big( \|x - y\| + |s - u|^{1/2} \Big( \sqrt{d} + \sqrt{p - 1}\, \sup_{a \in \mathcal{A}} \E^\W \Big[ e^{- \frac{1}{2}\int_0^T \|a_z\|^2 \, dz} \Big( \int_0^T \|a_z\|^2 \, dz \Big)^{1/2}\Big] \Big) \Big)
    \\&\leq L \Big( \|x - y\| + | s - u|^{1/2} \Big( \sqrt{d} + \sqrt{\frac{p - 1}{e}}\Big) \Big), 
\end{align*}
where we used that \([0, \infty) \ni \ell \mapsto e^{-\ell / 2} \sqrt{\ell}\) is maximized at \(\ell = 1\). This shows that \(\widehat{v}\) is continuous. 
We now show that \(\widehat{v}\) satisfies viscosity properties associated to the nonlinear PDE \eqref{eq: viscosity}, starting with the subsolution part.
Take \((s, y) \in [0, t) \times \bR^d\) and a function \(\psi \in C^\infty_b ([0, t] \times \bR^d)\) such that \(\widehat{v} \leq \psi\) and \(\psi (s, y) = \widehat{v} (s, y)\). Notice that \(\psi\) is bounded away from zero by \(\inf f\).
We aim to show that 
\[
- \frac{\partial \psi}{\partial t} (s, y) - \sup_{a \in \bR^d} \Big[ \sqrt{p - 1} \langle D \psi (s, y), a \rangle - \frac{1}{2} \psi (s, y) \| a \|^2 \Big] - \frac{1}{2} \on{tr} \big[ D^2 \psi (s, y)\big] \leq 0.
\]
From Lemma~\ref{lem: coincide} and Corollary~\ref{coro: for proof}, we obtain, for every \(u \in (0, t - s)\), that 
\begin{equation} \label{eq: DPP application} \begin{split} 
    0 &=
    \sup_{a \in \mathcal{A}} \E^\W \Big[ e^{- \frac{1}{2} \int_0^{T} \|a_z\|^2 \, dz} \, \widehat{v} \Big(s + u, y + X_{u} + \sqrt{p - 1} \, \int_0^u a_z \, dz\Big) - \widehat{v} (s, y) \Big]
    \\&= 
    \sup_{a \in \mathcal{A}} \E^\W \Big[ e^{- \frac{1}{2} \int_0^{u} \|a_z\|^2 \, dz} \, \widehat{v} \Big(s + u, y + X_{u} + \sqrt{p - 1} \, \int_0^u a_z \, dz\Big) - \widehat{v} (s, y) \Big], 
\end{split} \end{equation} 
where the last equality follows from the fact that the supremum always takes \(a \equiv 0\) on \((u, T]\).
Clearly, the first supremum ignores processes that are not contained in 
\[
\mathcal{A}^* := \Big\{ a \in \mathcal{A} \colon \E^\W \Big[ e^{ - \frac{1}{2} \int_0^T \|a_z\|^2 \, dz } \Big] \geq \frac{1}{2}\frac{\inf f}{\sup f} \Big\}.  
\]
Thus, in both suprema from \eqref{eq: DPP application} we may replace \(\mathcal{A}\) by \(\mathcal{A}^*\), getting that
\begin{align*} 
    0 = \sup_{a \in \mathcal{A}^*} \E^\W \Big[ e^{- \frac{1}{2} \int_0^{u} \|a_z\|^2 \, dz} \, \widehat{v} \Big(s + u, y + X_{u} + \sqrt{p - 1} \, \int_0^u a_z \, dz\Big) - \widehat{v} (s, y) \Big].
\end{align*}
Using the properties of the test function \(\psi\), we further obtain that 
\begin{align*}
    0 &\leq \sup_{a \in \mathcal{A}^*} \E^\W \Big[ e^{- \frac{1}{2} \int_0^u \|a_z\|^2 \, dz} \, \psi \Big(s + u, y + X_{u} + \sqrt{p - 1} \, \int_0^u a_z \, dz\Big) - \psi (s, y)  \Big].
\end{align*}
For every \(a \in \mathcal{A}^*\), applying It\^o's formula to 
\[
Y_u := e^{- \frac{1}{2} \int_0^u \|a_z\|^2 \, dz} \psi \Big(s + u, y + X_{u} + \sqrt{p - 1} \, \int_0^u a_z \, dz\Big), 
\]
we obtain that 
\begin{align*}
    d Y_u &= - \frac{1}{2} \|a_u\|^2 Y_u \, du + e^{- \frac{1}{2} \int_0^u \|a_z\|^2 \, dz} \Big( \left.\frac{\partial \psi }{\partial t}\right|_{t=s+u} \Big(s + u, y + X_{u} + \sqrt{p - 1} \, \int_0^u a_z \, dz\Big) 
    \\&\hspace{0.5cm} + \Big\langle D \psi \Big(s + u, y + X_{u} + \sqrt{p - 1} \, \int_0^u a_z \, dz\Big), \sqrt{p - 1} \, a_u \Big\rangle 
    \\&\hspace{0.5cm} + \frac{1}{2} \on{tr} \Big[ D^2 \psi \Big(s + u, y + X_{u} + \sqrt{p - 1} \, \int_0^u a_z \, dz\Big) \Big] \Big) \, du 
    \\&\hspace{0.5cm} + e^{- \frac{1}{2} \int_0^u \|a_z\|^2 \, dz} \Big \langle D \psi \Big(s + u, y + X_{u} + \sqrt{p - 1} \, \int_0^u a_z \, dz\Big), d X_u \Big\rangle.  
\end{align*}
The \(dX_u\)-integral has bounded quadratic variation and therefore is a true martingale with zero expectation. Using this fact, we find that
\begin{align*}
        0 &\leq \sup_{a \in \mathcal{A}^*} \E^\W \Big[ \int_0^u e^{- \frac{1}{2} \int_0^z \|a_r\|^2 \, dr} \, L \Big(s + z, y + X_z + \sqrt{p-1}\, \int_0^z a_r \, dr \Big) \, dz \Big], 
\end{align*}
where, recalling that \(\psi\) is bounded away from zero, 
\begin{align*}
L (z, x) &:=  \frac{\partial \psi}{\partial t} (z, x) + \sup_{a \in \bR^d} \Big[ \sqrt{p - 1} \langle D \psi (z, x), a \rangle - \frac{1}{2} \psi (z, x) \| a \|^2 \Big] + \frac{1}{2} \on{tr} \big[ D^2 \psi (z, x)\big]
\\&= \frac{\partial \psi}{\partial t} (z, x) + \frac{1}{2} (p - 1) \frac{\|D \psi (z, x)\|^2}{ \psi (z, x)} + \frac{1}{2} \on{tr} \big[ D^2 \psi (z, x)\big].
\end{align*}
Using that \(\psi \in C^\infty_b ([0, t] \times \bR^d)\) is Lipschitz continuous with Lipschitz derivatives, we further obtain that 
\begin{align*}
        0 &\leq \sup_{a \in \mathcal{A}^*} \E^\W \Big[ \int_0^u e^{- \frac{1}{2} \int_0^z \|a_r\|^2 \, dr} \, \Big( L \Big(s + z, y + X_z + \sqrt{p - 1}\, \int_0^z a_r \, dr \Big) - L (s, y) \Big) \, dz \Big] 
        \\&\hspace{1cm}+ \sup_{a \in \mathcal{A}^*} \E^\W \Big[ L (s, y) \int_0^u e^{- \frac{1}{2} \int_0^z \|a_r\|^2 \, dr} \, dz \Big] 
        \\&\leq \text{const.} \ \sup_{a \in \mathcal{A}^*} \E^\W \Big[ \int_0^u e^{- \frac{1}{2} \int_0^z \|a_r\|^2 \, dr} \, \Big( z + \|X_z\| + \sqrt{z (p - 1)}\,\, \Big(\int_0^z \|a_r\|^2 \, dr\Big)^{1/2} \, \Big)  \, dz \Big]   \\&\hspace{1cm}+ \sup_{a \in \mathcal{A}^*} \E^\W \Big[ L (s, y) \int_0^u e^{- \frac{1}{2} \int_0^z \|a_r\|^2 \, dr} \, dz \Big] 
        \\&\leq \text{const.} \ \Big( u^2 + u^{3/2} \Big)  + \sup_{a \in \mathcal{A}^*} \E^\W \Big[ L (s, y) \int_0^u e^{- \frac{1}{2} \int_0^z \|a_r\|^2 \, dr} \, dz \Big], 
\end{align*}
where the constant depends on \(\psi\) and \(p\).
Finally, dividing through \(u\) and letting \(u \downarrow 0\) yields that 
\begin{align*}
    0 \leq \limsup_{u \downarrow 0} \frac{1}{u} \int_0^u \sup_{a \in \mathcal{A}^*} L (s, y) \, \E^\W \Big[ e^{- \frac{1}{2} \int_0^z \|a_r\|^2 \, dr} \Big] \, dz.
\end{align*}
In case \(L (s, y) < 0\), we have 
\[
\sup_{a \in \mathcal{A}^*} L (s, y) \, \E^\W \Big[ e^{- \frac{1}{2} \int_0^z \|a_r\|^2 \, dr} \Big] = L (s, y) \, \inf_{a \in \mathcal{A}^*} \E^\W \Big[ e^{- \frac{1}{2} \int_0^z \|a_r\|^2 \, dr} \Big] \leq \frac{L (s, y) \inf f }{2 \sup f}, 
\]
which yields the contradiction
\[
\limsup_{u \downarrow 0} \frac{1}{u} \int_0^u \sup_{a \in \mathcal{A}^*} L (s, y) \, \E^\W \Big[ e^{- \frac{1}{2} \int_0^z \|a_r\|^2 \, dr} \Big] \, dz \leq \frac{L (s, y) \inf f }{2 \sup f} < 0.
\]
Thus, we must have \(L (s, y) \geq 0\), which proves that \(\widehat{v}\) has the subsolution property. 

Let us now discuss the supersolution property, leaving out some computations for brevity. Take \((s, y) \in [0, t) \times \bR^d\) and a function \(\psi \in C^\infty_b ([0, t] \times \bR^d)\) such that \(\widehat{v} \geq \psi\) and \(\psi (s, y) = \widehat{v} (s, y)\). As \(\widehat{v} \geq \inf f > 0\), we may assume that \(\psi \geq \inf f / 2\).
Let \(a^* \in \mathcal{A}\) be the constant strategy
\[
a^* \equiv \frac{\sqrt{p - 1}}{\psi (s, y)} \, D \psi (s, y).
\]
From \eqref{eq: DPP application} and an It\^o formula computation, we get that 
\begin{align*}
    0 &\geq \sup_{a \in \mathcal{A}} \E^\W \Big[ e^{- \frac{1}{2} \int_0^{u} \|a_z\|^2 \, dz} \, \psi \Big(s + u, y + X_{u} + \sqrt{p - 1} \, \int_0^u a_z \, dz\Big) - \psi (s, y) \Big]
    \\&\geq \E^\W \Big[ e^{- \frac{1}{2} \int_0^{u} \|a^*_z\|^2 \, dz} \, \psi \Big(s + u, y + X_{u} + \sqrt{p - 1} \, \int_0^u a^*_z \, dz\Big) - \psi (s, y) \Big]
    \\&= \E^\W \Big[ \int_0^u e^{- \frac{1}{2} \int_0^z \|a^*_r\|^2 \, dr} K \Big(s + z, y + X_z + \sqrt{p - 1} \, \int_0^z a^*_r \, dr \Big) \, dz \Big], 
\end{align*}
where
\begin{align*}
    K (z, x) := \frac{\partial \psi}{\partial t} (z, x) &+ \frac{p - 1}{\psi (s, y)} \, \langle D \psi (z, x), D \psi (s, y) \rangle + \frac{1}{2} \on{tr} \big[ D^2 \psi (z, x) \big] \\&- \frac{p - 1}{2 \psi (s, y)^2} \| D \psi (s, y) \|^2 \psi (z, x).
\end{align*}
Dividing by \(u\), letting \(u \downarrow 0\), and using the dominated convergence theorem (that is applicable due to the fact that \(\psi \in C^\infty_b ([0, t] \times \bR^d)\)), we finally find that 
\begin{align*}
    0 &\geq \liminf_{u \downarrow 0} \E^{\W} \Big[ \frac{1}{u}  \int_0^u e^{- \frac{1}{2} \int_0^z \|a^*_r\|^2 \, dr} K \Big(s + z, y + X_z + \sqrt{p - 1} \,\int_0^z a^*_r \, dr \Big) \, dz \Big] 
    \\&= K (s, y) = L (s, y). 
\end{align*}
This proves the supersolution property.

In summary, we conclude that \(\widehat{v}\) is a viscosity solution to the nonlinear PDE \eqref{eq: viscosity}. Notice also that \(\widehat{v} (t, x) = f(x) = v (t, x)\) for all \(x \in \bR^d\).
Thus, both \(v\) and \(\widehat{v}\) can be transferred to the heat equation with the same terminal data via the change of variable \(y \mapsto y^p\). We conclude from its uniqueness (among bounded viscosity solutions; see, e.g., \cite[Theorem~3.6]{nisio_book} for a suitable uniqueness result) that \(v = \widehat{v}\), which entails (b). 
\qed

\appendix 

\section{A Dynamic Programming Principle}

The purpose of this section is to establish a dynamic programming principle that is used in the main body of the proof for Theorem~\ref{theo-VF}. We work in the framework introduced in Section~\ref{sec: setup}, i.e., \(\Omega\) is the canonical space \(C ([0, T]; \bR^d)\), \(\cF\) is the corresponding Borel \(\sigma\)-field, \(\W\) is the Wiener measure, and \((\cF^0_t)_{t \in [0, T]}\) is the natural filtration generated by the coordinate process \(X_t (\omega) = \omega (t)\). Moreover, with slight abuse of notation, \(\mathcal{A}\) denotes the set of all \(\bR^d\)-valued \((\cF^0_t)_{t \in [0, T]}\)-progressively measurable processes \(a \colon [0, T] \times \Omega \to \bR^d\) such that \(\int_0^T \|a_s (\omega)\|^2 \, ds < \infty\) for all~\(\omega \in \Omega\). 

For \(\omega, \alpha \in \Omega\) and \(t \in [0, T]\), we define
\[
( \omega \otimes_t \alpha ) (s):= \begin{cases} \omega (s), & s \leq t, \\ \omega (t) + \alpha (s) - \alpha (t), & s > t. \end{cases} 
\] 
Building on this notation, for a fixed \(p \geq 1\), \(\varphi \in \Li\) and \((t, \omega) \in [0, T] \times \Omega\), we define
\[
J (t, \omega, a, \varphi) := \E^\W\Big[ e^{- \frac{1}{2} \int_t^{T} \| a_s (\omega \, \otimes_t \, X) \|^2 \, ds} \varphi \Big ( \omega \otimes_t X + \sqrt{p - 1} \int_t^{\cdot \vee t} a_s (\omega \otimes_t X) \, ds \Big) \Big], 
	\]
and 
\[
\widehat{\cE}_t (\varphi) (\omega) := \sup_{a \in \mathcal{A}} J (t, \omega, a, \varphi).
\] 
The control class \(\mathcal{A}\) differs slightly from the one used in the main body of this paper, where we use \((\cF^\W_t)_{t \in [0, T]}\) instead of \((\cF^0_t)_{t \in [0, T]}\) for the progressive measurability, and require square-integrability only \(\W\)-almost surely. This restriction causes no problem for the proof of Theorem~\ref{theo-VF}, as the following lemma explains. We denote by \(\mathcal{A}^\W\) the set of all \(\bR^d\)-valued \((\cF^\W_t)_{t \in [0, T]}\)-progressively measurable processes such that \(\W\)-a.s. \(\int_0^T \|a_s\|^2 \, ds < \infty\).
\begin{lemma} \label{lem: coincide}
    For all \(\varphi \in \Li\), 
    \[
    \sup_{a \in \mathcal{A}} J (0, 0, a, \varphi) = \sup_{a \in \mathcal{A}^\W} J (0, 0, a, \varphi),
    \]
    where \(0\) refers to the zero path.
\end{lemma}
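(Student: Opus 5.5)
Since every \((\cF^0_t)\)-progressively measurable process is also \((\cF^\W_t)\)-progressively measurable, and a process that is square integrable on every path is in particular square integrable \(\W\)-a.s., we have \(\mathcal{A} \subset \mathcal{A}^\W\). Hence the inequality \(\leq\) holds trivially. The plan for the converse is to replace each \(a \in \mathcal{A}^\W\) by a raw-filtration control with the same value of \(J(0,0,\cdot,\varphi)\).

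Fix \(a \in \mathcal{A}^\W\). By \cite[Lemma~2.1]{STZ_11}, which is already used in the proof of Corollary~\ref{coro-arbitrary-setup}, there is an \((\cF^0_t)\)-progressively measurable process \(a'\) with \(a = a'\) \(dt\otimes\W\)-a.e. This process may fail to be square integrable on every path, so we truncate it by setting
\[
\rho(\omega) := \inf\Big\{ t \in [0,T] \colon \int_0^t \|a'_s(\omega)\|^2\, ds = \infty \Big\},
\qquad a''_t := a'_t \1_{\{\int_0^t \|a'_s\|^2 ds < \infty\}} .
\]
The map \(t\mapsto\int_0^t\|a'_s\|^2\,ds\) is \((\cF^0_t)\)-adapted and nondecreasing, with values in \([0,\infty]\). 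The set \(\{(t,\omega)\colon \int_0^t \|a'_s(\omega)\|^2 ds<\infty\}\) is therefore progressive, so \(a''\) is \((\cF^0_t)\)-progressively measurable. Moreover \(\int_0^T\|a''_s\|^2\,ds<\infty\) holds for every \(\omega\). To see this, note that the integral of \(\|a'\|^2\) over the region where the running integral is finite stays finite: if it blew up at \(\rho\), the set would be \([0,\rho)\), and the integral over \([0,\rho)\) equals \(\lim_{t\uparrow\rho}\int_0^t\), which is infinite only if that limit diverges. In that case we instead cap with \(\1_{\{\int_0^t\|a'_s\|^2ds\le n\}}\) for large \(n\). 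Handling this case correctly is the main technical point. The cleanest choice is
\[
a^{(n)}_t := a'_t\1_{\{\int_0^t\|a'_s\|^2ds\le n\}},
\]
which is raw progressive, satisfies \(\int_0^T\|a^{(n)}\|^2\le n\) pathwise, and belongs to \(\mathcal{A}\).

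On the event \(E_n:=\{\int_0^T\|a_s\|^2ds\le n\}\) we have, \(\W\)-a.s., \(a^{(n)} = a' = a\) \(dt\)-a.e., because \(a=a'\) \(dt\)-a.e. for \(\W\)-a.a. \(\omega\) by Fubini. Consequently the exponential weights and the shifted paths \(X+\sqrt{p-1}\int_0^\cdot a\,ds\) coincide \(\W\)-a.s. on \(E_n\), where continuous versions of the integrals are used. Since \(\varphi\ge0\) and \(\varphi\) is bounded, we obtain
\[
J(0,0,a^{(n)},\varphi)\ \ge\ \E^\W\Big[e^{-\frac12\int_0^T\|a_s\|^2ds}\,\varphi\Big(X+\sqrt{p-1}\int_0^\cdot a_sds\Big)\1_{E_n}\Big].
\]
Here we use that \(X\) and \(0\otimes_0X\) coincide, since \(X_0=0\) \(\W\)-a.s. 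As \(n\to\infty\), \(E_n\uparrow\) a set of full \(\W\)-measure, so monotone convergence gives \(\sup_n J(0,0,a^{(n)},\varphi)\ge J(0,0,a,\varphi)\). This proves \(\geq\).

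The only delicate points are the progressive measurability of the truncation indicator, which follows from the adaptedness and path-continuity of the running integral on \(\{\,\cdot\le n\}\), and the identification of the two integrals \(\int_0^\cdot a\,ds\) and \(\int_0^\cdot a'\,ds\) up to indistinguishability. I expect the former to be the main obstacle.
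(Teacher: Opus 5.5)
Your proof is correct, but it takes a different route from the paper's. The paper never applies the raw-version lemma of \cite{STZ_11} to a general control. Instead it equips \(\mathcal{A}^\W\) with \(L^2\)-convergence in \(\W\)-probability and observes that \(a\mapsto J(0,0,a,\varphi)\) is continuous for this topology, by dominated convergence, using that \(\varphi\) is bounded and continuous. It then invokes density of bounded simple adapted controls \cite[Proposition~3.2.6]{KaratzasShreve1991}. Only after that does it pass to raw versions; for bounded controls these can be chosen bounded, so they lie in \(\mathcal{A}\) at once. You instead take the raw version \(a'\) of an arbitrary \(a\in\mathcal{A}^\W\) and restore pathwise square integrability by the truncation \(a^{(n)}=a'\1_{\{\int_0^\cdot\|a'_s\|^2ds\le n\}}\). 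You then conclude by monotone convergence over \(E_n\), using only \(\varphi\ge0\). Your route avoids both the continuity of \(J\) and the density theorem, and it works verbatim for any non-negative Borel \(\varphi\), not only \(\varphi\in\Li\). The paper's route gives as a by-product that the supremum over \(\mathcal{A}^\W\) may be restricted to bounded simple \((\cF^\W_t)\)-adapted controls.

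Two points to clean up. First, delete the detour through \(a''\). Your claim that \(\int_0^T\|a''_s\|^2ds<\infty\) for every \(\omega\) is false in general, since the running integral may be finite on \([0,\rho)\) but diverge as \(t\uparrow\rho\); nothing afterwards uses it. Second, the measurability you expect to be the main obstacle is routine. The map \(t\mapsto\int_0^t\|a'_s\|^2ds\) is \((\cF^0_t)\)-adapted, nondecreasing and left-continuous with values in \([0,\infty]\), hence progressive, so the truncation indicator is progressive. The bound \(\int_0^T\|a^{(n)}_s\|^2ds\le n\) holds for every \(\omega\), because \(\{t\colon\int_0^t\|a'_s\|^2ds\le n\}\) is an initial interval on which the running integral never exceeds \(n\).
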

\begin{proof}
Endow \(\mathcal{A}^\W\) with the topology of \(L^2\)-convergence in \(\W\)-probability, i.e., \(a^n \to a\) if
\[
\W \Big( \int_0^T \| a^n_s - a_s \|^2 \, ds \geq \varepsilon \Big) \to 0, \quad n \to \infty, \quad \text{for all } \varepsilon > 0. 
\]
By the dominated convergence theorem, the map \(a \mapsto J (0, 0, a, \varphi)\) is continuous from \(\mathcal{A}^\W\) to \(\bR\). As the set \(\mathcal{S}^\W\) of bounded \((\cF^\W_t)_{t \in [0, T]}\)-adapted simple control processes is dense in \(\mathcal{A}^\W\) with this topology (see, e.g., \cite[Proposition~3.2.6]{KaratzasShreve1991}), we find that 
\begin{align*}
    \sup_{a \in \mathcal{A}^\W} J (0, 0, a, \varphi) = \sup_{a \in \mathcal{S}^\W} J (0, 0, a, \varphi).
\end{align*}
Finally, by \cite[Lemma~2.1]{STZ_11}, for any control process \(a\in \mathcal{S}^\W\) there exists a control process \(a' \in \mathcal{A}\) such that \(a = a'\) \(dt \otimes \W\)-almost everywhere. Here, we used that any \(a \in \mathbb{S}^\W\) is bounded and therefore, we may take \(a'\) bounded as well.
This yields that 
\[
\sup_{a \in \mathcal{S}^\W} J (0, 0, a, \varphi) \leq \sup_{a \in \mathcal{A}} J (0, 0, a, \varphi).
\]
Noting the trivial inequality 
\[
    \sup_{a \in \mathcal{A}} J (0, 0, a, \varphi) \leq \sup_{a \in \mathcal{A}^\W} J (0, 0, a, \varphi), 
    \]
the proof is complete.
\end{proof}

\begin{lemma} \label{lem: Lip}
	Let \(t \in [0, T]\) and \(\varphi \in \Li\) with Lipschitz constant \(L > 0\). Then, for every \(a \in \mathcal{A}\) and \(\omega, \alpha \in \Omega\), with \(a^{t, \omega} := a (\omega \, \otimes_t X)\), 
	\[
	| J (t, \omega, a, \varphi) - J (t, \alpha, a^{t, \omega}, \varphi) | \leq L \| \omega - \alpha \|_t.
	\] 
	Moreover, \(\widehat{\cE}_t (\Li) \subset \Li\).
\end{lemma}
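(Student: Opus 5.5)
The plan is to prove the two assertions separately. The first is a pathwise estimate for a fixed control; the second, \(\widehat{\cE}_t(\Li)\subset\Li\), will follow from the first by taking suprema.

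\emph{The Lipschitz estimate.} Fix \(a\in\mathcal{A}\) and \(\omega,\alpha\in\Omega\), and put \(a^{t,\omega}=a(\omega\otimes_t X)\), which is again \((\cF^0_t)\)-progressively measurable and pathwise square integrable. The key observation is that \(a^{t,\omega}\) evaluated along \(\alpha\otimes_t X\) only uses the increments of \(X\) after time \(t\). Indeed, \(\omega\otimes_t(\alpha\otimes_t X)=\omega\otimes_t X\), so
\[
a^{t,\omega}_s(\alpha\otimes_t X)=a_s(\omega\otimes_t X), \qquad s\in[0,T].
\]
Hence the exponential weights in \(J(t,\omega,a,\varphi)\) and \(J(t,\alpha,a^{t,\omega},\varphi)\) coincide pathwise. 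The shifted paths inside \(\varphi\) are
\[
\omega\otimes_t X+\sqrt{p-1}\int_t^{\cdot\vee t}a_s(\omega\otimes_t X)\,ds
\quad\text{and}\quad
\alpha\otimes_t X+\sqrt{p-1}\int_t^{\cdot\vee t}a_s(\omega\otimes_t X)\,ds .
\]
They differ only by \(\omega\otimes_t X-\alpha\otimes_t X\). This difference equals \(\omega-\alpha\) on \([0,t]\) and the constant \(\omega(t)-\alpha(t)\) afterwards, so its uniform norm is \(\|\omega-\alpha\|_t\). Using the \(L\)-Lipschitz property of \(\varphi\) and the bound \(e^{-\frac12\int\|\cdot\|^2}\le1\), the difference of the two expectations is at most \(L\|\omega-\alpha\|_t\). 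No integrability issues arise, because \(\varphi\) is bounded.

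\emph{The inclusion \(\widehat{\cE}_t(\Li)\subset\Li\).} Nonnegativity is immediate. Boundedness follows from \(0\le J\le\sup\varphi\). For the Lipschitz property, note that the map \(a\mapsto a^{t,\omega}\) sends \(\mathcal{A}\) into \(\mathcal{A}\). The first part therefore gives
\[
J(t,\omega,a,\varphi)\le J(t,\alpha,a^{t,\omega},\varphi)+L\|\omega-\alpha\|_t\le\widehat{\cE}_t(\varphi)(\alpha)+L\|\omega-\alpha\|_t .
\]
Taking the supremum over \(a\) and exchanging the roles of \(\omega\) and \(\alpha\) shows that \(\widehat{\cE}_t(\varphi)\) is \(L\)-Lipschitz with respect to \(\|\cdot\|_t\le\|\cdot\|_T\). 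This also implies that \(\widehat{\cE}_t(\varphi)(\omega)\) depends only on \(\omega|_{[0,t]}\), and that it is continuous, hence Borel, as a map on \(\Omega\).

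\emph{Expected obstacle.} The only delicate point is measurability. One has to check that \(a^{t,\omega}\) is progressively measurable with respect to \((\cF^0_s)\). The route is that \((s,\eta)\mapsto(s,\omega\otimes_t\eta)\) is \(\mathcal B([0,u])\otimes\cF^0_u\)-measurable for each \(u\), since \(\omega\otimes_t\eta\) restricted to \([0,u]\) depends only on \(\eta|_{[0,u]}\). Composition with \(a\) then preserves progressive measurability, and square integrability for every path is inherited directly from \(a\). Everything else is the pathwise identity displayed in the first part.
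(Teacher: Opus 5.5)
Your proof is correct and follows the paper's argument. The fixed-control bound comes from the pathwise identity \(\omega\otimes_t(\alpha\otimes_t X)=\omega\otimes_t X\) together with \(\|\omega\otimes_t X-\alpha\otimes_t X\|_T=\|\omega-\alpha\|_t\), and the Lipschitz property of \(\widehat{\cE}_t(\varphi)\) then follows by taking the supremum over \(a\) (using \(a^{t,\omega}\in\mathcal{A}\)) and symmetry; you also make explicit the identity and the progressive measurability of \(a^{t,\omega}\), which the paper's one-line proof leaves implicit.
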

\begin{proof}
The first claim follows immediately from the fact that 
\[
\| \omega \otimes_t X - \alpha \otimes_t X \|_T \leq \| \omega - \alpha \|_t. 
\]
For the second, notice that 
\[
\widehat{\cE}_t (\varphi) (\omega) - \widehat{\cE}_t (\varphi) (\alpha) \leq \sup_{a \in \mathcal{A}} |J (t, \omega, a, \varphi) - J (t, \alpha, a^{t, \omega}, \varphi) | \leq L \| \omega - \alpha \|_t.
\] 
By symmetry, this implies the claim. 
\end{proof} 

The final part of Lemma~\ref{lem: Lip} implies that \(\omega \mapsto \widehat{\cE}_t (\varphi) (\omega)\) is \(\cF\)-measurable for all \(t \in [0, T]\) and \(\varphi \in \Li\). As any \(\widehat{\cE}_t (\varphi) (\omega)\) depends on \(\omega\) only through \((\omega (s))_{s \leq t}\), following from its definition, Galmarino's test further shows that \(\omega \mapsto \widehat{\cE}_t (\varphi) (\omega)\) is even \(\cF^0_t\)-measurable. 
The following is the dynamic programming principle (or time-consistency property) for our framework. Its proof adapts well-known arguments for establishing dynamic programming in settings with sufficiently regular value functions, see, e.g., \cite[Theorem~4.3.3]{YZ_99}.

\begin{theorem} \label{theo: semigroup}
For every \(\omega \in \Omega, t \in [0, T), s \in (t, T]\) and \(\varphi \in \Li\),  
	\[
	\widehat{\cE}_t (\widehat{\cE}_s (\varphi)) (\omega) = \widehat{\cE}_t (\varphi) (\omega).
	\] 
\end{theorem}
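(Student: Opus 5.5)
We prove the two inequalities separately, following the standard pattern for dynamic programming with regular value functions. The key input is Lemma~\ref{lem: Lip}: $\widehat{\cE}_s(\varphi)\in\Li$, it is $\cF^0_s$-measurable, and the map $\omega\mapsto J(s,\omega,a,\varphi)$ is $L$-Lipschitz in $\|\cdot\|_s$ uniformly in the shifted control. Throughout, for $a\in\mathcal A$ and $\omega$, we write $U^{t,\omega,a}:=\omega\otimes_t X+\sqrt{p-1}\int_t^{\cdot\vee t}a_r(\omega\otimes_t X)\,dr$ for the controlled path started at $(t,\omega)$.

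\emph{Inequality $\le$ (i.e.\ $\widehat{\cE}_t(\varphi)\le\widehat{\cE}_t(\widehat{\cE}_s(\varphi))$).} Fix $a\in\mathcal A$ and split the cost as $\int_t^T=\int_t^s+\int_s^T$. Conditioning on $\cF^0_s$ under $\W$ and using the independence of the Brownian increments after $s$ (a regular conditional distribution argument for the concatenation $\otimes_s$), we get
\[
J(t,\omega,a,\varphi)=\E^\W\Big[e^{-\frac12\int_t^s\|a_r(\omega\otimes_tX)\|^2dr}\,J\big(s,U^{t,\omega,a},a^{s,\cdot},\varphi\big)\Big].
\]
Here $a^{s,\cdot}$ is the control shifted along the realized path, which again lies in $\mathcal A$ because of $(\cF^0_t)$-progressive measurability. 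Bounding the inner $J$ by $\widehat{\cE}_s(\varphi)(U^{t,\omega,a})$ gives $J(t,\omega,a,\varphi)\le J(t,\omega,a\1_{[t,s]},\widehat{\cE}_s(\varphi))$. Since the cost after $s$ is then zero, this is at most $\widehat{\cE}_t(\widehat{\cE}_s(\varphi))(\omega)$. Taking the supremum over $a$ yields the claim. The only technical point is justifying the conditioning identity; this is standard on canonical space with raw filtrations, via Galmarino's test and Fubini.

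\emph{Inequality $\ge$.} This is the main obstacle, since it requires a measurable $\varepsilon$-optimal selection of controls after time $s$. We avoid measurable selection by using the Lipschitz regularity. Fix $\varepsilon>0$ and use separability of $\Omega$ to choose a countable Borel partition $(B_i)$ of $\Omega$ into $\cF^0_s$-measurable sets of $\|\cdot\|_s$-diameter less than $\varepsilon$, with representatives $\omega_i\in B_i$ and $\varepsilon$-optimal controls $a^i$ for $\widehat{\cE}_s(\varphi)(\omega_i)$. By Lemma~\ref{lem: Lip}, for $\alpha\in B_i$ the control $a^i$, transported to $\alpha$, satisfies $J(s,\alpha,\cdot,\varphi)\ge\widehat{\cE}_s(\varphi)(\alpha)-2L\varepsilon-\varepsilon$. (One has to check carefully how the shift $a^{s,\omega}$ in the lemma acts; since $a^i$ only matters through the increments after $s$, we can define it via $\omega\mapsto a^i(\omega_i\otimes_s\omega)$.) Given any $b\in\mathcal A$ for the outer problem, we paste controls: $\bar a_r:=b_r$ for $r\le s$, and $\bar a_r:=\sum_i\1_{B_i}(U^{t,\cdot,b})\,a^i_r(\omega_i\otimes_s\cdot)$ for $r>s$. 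This control is $(\cF^0_r)$-progressive and square integrable, so $\bar a\in\mathcal A$. Applying the conditioning identity from the first step to $\bar a$ gives
\[
\widehat{\cE}_t(\varphi)(\omega)\ge J(t,\omega,\bar a,\varphi)\ge J\big(t,\omega,b,\widehat{\cE}_s(\varphi)\big)-(2L+1)\varepsilon,
\]
where we used that the discount factor is at most one. Taking the supremum over $b$ and letting $\varepsilon\downarrow0$ concludes the proof. The delicate part, which I expect to need the most care, is the consistency of the path concatenations: checking that $\bar a(\omega\otimes_tX)$ after time $s$, evaluated on the controlled path, coincides with the shifted $a^i$ appearing in the conditional expectation.
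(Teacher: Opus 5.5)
Your proposal is correct and follows the paper's proof essentially step for step. For \(\widehat{\cE}_t(\varphi)\le\widehat{\cE}_t(\widehat{\cE}_s(\varphi))\) you use the same conditioning identity, bound the inner \(J\) by \(\widehat{\cE}_s(\varphi)\), and truncate the control at \(s\); for the converse you use a countable dense set, Lemma~\ref{lem: Lip}, and \(\cF^0_s\)-measurable pasting of \(\varepsilon\)-optimal controls of the form \(a^i(\omega_i\otimes_s\cdot)\). The only difference is in how you select the pasted controls: you use a partition into small \(\|\cdot\|_s\)-cells, which gives error \((2L+1)\varepsilon\), whereas the paper writes \(\widehat{\cE}_s(\varphi)=\sup_n J(s,\cdot,a^n,\varphi)\) and picks the first \(\varepsilon\)-optimal index \(F_\varepsilon\), which additionally needs measurability of \(\alpha\mapsto J(s,\alpha,a^n,\varphi)\).
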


\begin{proof}
	Throughout the proof, take \(\omega \in \Omega, t \in [0, T), s \in (t, T]\) and \(\varphi \in \Li\).
	We prove the two inequalities \(\leq\) and \(\geq\), starting with the latter.
To ease the presentation, set 
\begin{align*}
D^a_{r, u} &:= e^{- \frac{1}{2} \int_r^u \| a_v\|^2 \, dv}, 
\\
H_{s, t}^a (\alpha) &:= \alpha + \sqrt{p - 1} \int_t^{(\cdot \, \wedge s) \vee t} a_r (\alpha) \, dr.
\end{align*} 
Using that \( (X_{u} - X_s)_{u \in [s, T]}\) is \(\W\)-independent of \(\cF^0_s\), we find that 
\begin{align*}
	\widehat{\cE}_t (\varphi) (\omega) &= \sup_{a' \in \mathcal{A}} \, \int_\Omega D_{t, s}^{a'} (\omega \otimes_t \alpha) \E^\W\Big[ D_{s, T}^{a'} ((\omega \otimes_t \alpha) \otimes_s X) \\&\hspace{2cm} \varphi \Big( (\omega \otimes_t \alpha) \otimes_s X + \sqrt{p - 1} \int_t^{\cdot \vee t} a_r' ( (\omega \otimes_t \alpha) \otimes_s X) \, dr \Big) \mid \cF_s^0 \Big] (\alpha) \, \W (d \alpha)
	\\&= \sup_{a' \in \mathcal{A}} \, \int_\Omega D_{t, s}^{a'} (\omega \otimes_t \alpha) \E^\W\Big[ D_{s, T}^{a'} ((\omega \otimes_t \alpha) \otimes_s X) \\&\hspace{2cm} \varphi \Big( (\omega \otimes_t \alpha) \otimes_s X + \sqrt{p - 1} \int_t^{\cdot \vee t} a_r' ( (\omega \otimes_t \alpha) \otimes_s X) \, dr \Big) \Big] \, \W (d \alpha)
    \\&= \sup_{a' \in \mathcal{A}} \E^\W \Big[ D_{t, s}^{a'} (\omega \otimes_t X) \, J (s, H^{a'}_{s, t} (\omega \otimes_t X) , a' ( (\omega\otimes_t X) \otimes_s \cdot \, ), \varphi) \Big] 
	\\&\leq \sup_{a' \in \mathcal{A}} \E^\W\Big[ D_{t, s}^{a'} (\omega \otimes_t X) \widehat{\cE}_s (\varphi) (H^{a'}_{s, t} (\omega \otimes_t X)) \Big] 
	\\&= \sup_{a' \in \mathcal{A}} \E^\W\Big[ D_{t, T}^{a'} (\omega \otimes_t X) \widehat{\cE}_s (\varphi) (H^{a'}_{s, t} (\omega \otimes_t X)) \Big] 
	\\&= \widehat{\cE}_t (\widehat{\cE}_s (\varphi)) (\omega). \phantom \int
\end{align*}	
To pass from \(D^{a'}_{t, s}\) to \(D^{a'}_{t, T}\) in the second to last equality, we used the supremum, which allowed us to replace \(a'\) by \(a' \1_{[0, s]}\), as \(\widehat{\cE}_s (\varphi)\) is non-negative and \(\cF^0_s\)-measurable. 
This proves the \(\geq\) direction. 

For the converse direction, set \(\Omega_s := \{ \omega \in \Omega : \omega = \omega (\cdot \wedge s) \}\), which is a closed subspace of the Polish space \(\Omega\). Consequently, \(\Omega_s\) is separable. Let \((\omega^n)_{n = 1}^\infty\) be a countable dense subset of \(\Omega_s\). For each \(n, m \in \mathbb{N}\), let \(a^{n, m} \in \mathcal{A}\) be such that 
\[
\widehat{\cE}_s (\varphi) (\omega^n) \leq J (s, \omega^n, a^{n, m}, \varphi) + \frac{1}{m}.
\] 
Thanks to Lemma~\ref{lem: Lip}, we obtain
\begin{align*}
\widehat{\cE}_s (\varphi) (\omega) \leq \widehat{\cE}_s (\varphi) (\omega^n) + L \| \omega - \omega^n \|_s \leq  J (s, \omega, a^{n, m} (\omega^n \otimes_s X), \varphi) + \frac{1}{m} + 2 L \| \omega - \omega^n \|_s.
\end{align*} 
Denoting \((a^n)_{n = 1}^\infty\) an enumeration of \((a^{n, m} (\omega^n \otimes_s X))_{n, m = 1}^\infty\), we obtain that 
\begin{align} \label{eq: opti}
\widehat{\cE}_s (\varphi) (\omega) = \sup_{n \in \mathbb{N}} J (s, \omega, a^n, \varphi).
\end{align}
Take \(\varepsilon > 0\) and set 
\[
F_\varepsilon (\alpha) := \inf \big\{ n \in \mathbb{N} \colon J (s, \alpha, a^{n}, \varphi) \geq \widehat{\cE}_s (\varphi) (\alpha) - \varepsilon \big\}, \quad \alpha \in \Omega.
\]
It is easy to see that \(F_\varepsilon\) is \(\cF^0_s\)-measurable and that \(F_\varepsilon < \infty\) by \eqref{eq: opti}. Take an arbitrary \(a \in \mathcal{A}\) and set, for \(\alpha \in \Omega\),  
\[
a'_r (\alpha) := \begin{cases} a_r (\alpha), & r \leq s, \\ \sum_{n = 1}^\infty \1_{\{F_\varepsilon (H_{s, t}^a (\omega \otimes_t \alpha)) = n \}} a^n_r (\alpha), & r \in (s, T]. \end{cases}
\] 
Notice that \(a' \in \mathcal{A}\), as \(F_\varepsilon (H_{s, t}^a (\omega \otimes_t \alpha))\) is \(\cF^0_s\)-measurable. 
We are in the position to finish the proof. Using that \( (X_{u} - X_s)_{u \in [s, T]}\) is \(\W\)-independent of \(\cF_s^0\), we find that 
\begin{align*}
	\widehat{\cE}_t (\varphi) (\omega) \geq J (t, \omega, a', \varphi) 
&= \E^\W \Big[ D_{t, s}^{a'} (\omega \otimes_t X) \, J (s, H^{a'}_{s, t} (\omega \otimes_t X) , a' ( (\omega\otimes_t X) \otimes_s \cdot \, ), \varphi) \Big]
	\\&= \E^\W \Big[ D^{a}_{t, s} (\omega \otimes_t X) \sum_{n = 1}^\infty \1_{\{ F_\varepsilon (H_{s, t}^a (\omega \otimes_t X)) = n \}} J (s, H_{s, t}^a (\omega \otimes_t X), a^n, \varphi) \Big] 
	\\&\geq \E^\W \Big[ D^{a}_{t, s} (\omega \otimes_t X) \widehat{\cE}_s (\varphi) (H_{s, t}^a (\omega \otimes_t X))  \Big] - \varepsilon, 
\end{align*}
where we used that \(a^n ( \alpha \otimes_s \omega ) = a^n (\omega)\) by its construction.
Letting \(\varepsilon \downarrow 0\) and optimizing over all \(a \in \mathcal{A}\) shows that 
\begin{align*}
	\widehat{\cE}_t (\varphi) (\omega) &\geq \sup_{a \in \mathcal{A}} \E^\W \Big[ D^{a}_{t, s} (\omega \otimes_t X) \widehat{\cE}_s (\varphi) (H_{s, t}^a (\omega \otimes_t X))  \Big] 
	\\&= \sup_{a \in \mathcal{A}} \E^\W \Big[ D^{a}_{t, T} (\omega \otimes_t X) \widehat{\cE}_s (\varphi) (H_{s, t}^a (\omega \otimes_t X))  \Big] = \widehat{\cE}_t (\widehat{\cE}_s (\varphi)) (\omega), 
\end{align*}
which completes the proof. 
\end{proof}

The following lemma provides a time-homogeneity property of the convex expectation \(\widehat{\cE}\). 

\begin{lemma} \label{lem: homoge}
	For all \(\varphi \in \Li\), \(t \in [0, T]\) and \(\omega \in \Omega\), 
	\[
		\widehat{\cE}_{t} ( \varphi ) (\omega) = \widehat{\cE}_0 ( \varphi (\omega \, \overline{\otimes}_t \, X)  ) (0), 
	\] 
	where \(0\) refers to the constant path and
	\[
	( \omega \, \overline{\otimes}_t \, X )(s) := \begin{cases} \omega (s), & s \leq t, \\ \omega (t) + X_{s - t} - X_0, & s > t. \end{cases}
	\] 
\end{lemma}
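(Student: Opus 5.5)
The plan is to prove the identity by a change of control variables that transports a control defined on the shifted space $\omega \otimes_t X$ to a control defined on the original space $X$, and back. The key observation is that, under $\W$, the process $\omega\otimes_t X$ and the path $\omega\,\overline{\otimes}_t\,X$ have the same law on $[t,T]$ up to a time shift. More precisely, the increments $(X_{t+r}-X_t)_{r\in[0,T-t]}$ form a Brownian motion, and $\omega\otimes_t X$ is a deterministic function of these increments and of $\omega$.

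\textbf{Step 1 ($\le$).} Fix $a\in\mathcal A$ and define a new control $b_r(\beta):=a_{t+r}(\omega\,\overline{\otimes}_t\,\beta)$ for $r\in[0,T-t]$ and $b_r:=0$ for $r>T-t$. This $b$ is $(\cF^0_r)$-progressively measurable, since $\omega\,\overline{\otimes}_t\,\beta$ restricted to $[0,t+r]$ depends only on $\beta|_{[0,r]}$, and its square-integrability holds pathwise. Next I will compare the two path maps. The map $\beta\mapsto\omega\otimes_t\beta$ depends on $\beta$ only through the increments $(\beta(t+r)-\beta(t))_r$. Under $\W$, these increments have the same law as $(X_r-X_0)_r$. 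Pushing forward along $\theta\colon\beta\mapsto(\beta(t+r)-\beta(t))_{r\le T-t}$ turns
\[
e^{-\frac12\int_t^T\|a_s(\omega\otimes_tX)\|^2ds}\,\varphi\Big(\omega\otimes_tX+\sqrt{p-1}\int_t^{\cdot\vee t}a_s(\omega\otimes_tX)\,ds\Big)
\]
into
\[
e^{-\frac12\int_0^T\|b_r\|^2dr}\,\varphi\Big(\omega\,\overline{\otimes}_t\Big(X+\sqrt{p-1}\int_0^\cdot b_r\,dr\Big)\Big).
\]
The substitution $s=t+r$ in both integrals makes this an identity. On the right-hand side, $\varphi(\omega\,\overline{\otimes}_t\,\cdot)$ is applied to the shifted path $X+\sqrt{p-1}\int_0^\cdot b$. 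Here I use that $(\omega\,\overline{\otimes}_t\,\gamma)(s)$ involves $\gamma(s-t)-\gamma(0)$, and the shift starts at $0$. Hence $J(t,\omega,a,\varphi)$ equals the functional $J(0,0,b,\varphi(\omega\,\overline{\otimes}_t\,\cdot))$. This functional is again in $\Li$, because $\omega\,\overline{\otimes}_t\,\cdot$ is $1$-Lipschitz in the uniform norm. Taking suprema gives $\le$.

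\textbf{Step 2 ($\ge$).} Given $b\in\mathcal A$, define $a_s(\alpha):=b_{s-t}\big((\alpha(t+\cdot)-\alpha(t))\wedge\text{stopped at }T-t\big)$ for $s>t$, extending the argument path constantly after $T-t$, and set $a_s:=0$ for $s\le t$. This $a$ is $(\cF^0_s)$-progressively measurable. It depends on $\alpha$ only through the increments after $t$, so $a(\omega\otimes_t X)$ is the same function of $\theta(X)$. The integral of $b$ over $(T-t,T]$ is not reproduced by this construction. However, $\varphi(\omega\,\overline{\otimes}_t\,\cdot)$ only sees the path on $[0,T-t]$, and the cost is monotone. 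I therefore first replace $b$ by $b\1_{[0,T-t]}$, which can only increase $J(0,0,b,\cdot)$, and then repeat the computation of Step 1 in reverse.

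\textbf{Main obstacle.} The expected difficulty is the measurability and bookkeeping in this transfer, namely checking progressive measurability of the transported controls with respect to the raw filtration. I will handle it via Galmarino's test and the fact that $\theta$ maps $\cF^0_{t+r}$-information to $\cF^0_r$-information. A second point to verify is that $\W\circ\theta^{-1}$ is the Wiener measure on $C([0,T-t];\bR^d)$, extended constantly, and that the required expectations match under this identification. No limiting arguments are needed, since the integrands are bounded.
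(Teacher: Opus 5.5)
Your proposal is correct and follows essentially the same route as the paper. Your control $b_r=a_{t+r}(\omega\,\overline{\otimes}_t\,\cdot)$, set to zero after $T-t$, is the paper's $c^a$, and your reverse transport combined with discarding the cost after $T-t$ is the paper's $b^a$ construction; one harmless slip is that $\gamma\mapsto\omega\,\overline{\otimes}_t\,\gamma$ is $2$-Lipschitz rather than $1$-Lipschitz in $\|\cdot\|_T$ (because of the $-\gamma(0)$ term), which still gives $\varphi(\omega\,\overline{\otimes}_t\,\cdot)\in\Li$.
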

\begin{proof}
The claim essentially follows from the formula  
\[
\omega \, \overline{\otimes}_t \, (X_{(\cdot\, + t) \wedge T} - X_t) = \omega \otimes_t X, 
\]
and the fact that under \(\W\) the shifted process \((X_{t + s} - X_t)_{s \in [0, T - t]}\) is a standard Brownian motion. We provide the details.
For \(a \in \mathcal{A}\), set 
\[
c^a_r (X) := \begin{cases} a_{r + t} (\omega \, \overline{\otimes}_t \, X), & r \in [0, T - t], \\ 0, & r \in (T - t, T], \end{cases}
\]
which is clearly also an element of \(\mathcal{A}\).
Now, we obtain that 
\begin{align*}
    J (0, 0, c^a, \varphi (\omega \, \overline{\otimes}_t \, X)) &= \E^\W \Big[ e^{- \frac{1}{2} \int_t^T \|a_s (\omega \, \overline{\otimes}_t \, X)\|^2 \, ds} \varphi \Big( \omega \, \overline{\otimes}_t \, X + \sqrt{p - 1}\int_t^{\cdot \, \vee t} a_s (\omega \, \overline{\otimes}_t \, X) \, ds \Big) \Big]
    \\&= \E^\W \Big[ e^{- \frac{1}{2} \int_t^T \|a_s (\omega \, \otimes_t \, X)\|^2 \, ds} \varphi \Big( \omega \otimes_t X + \sqrt{p - 1} \int_t^{\cdot \, \vee t} a_s (\omega \otimes_t X) \, ds \Big) \Big]
    \\&= J (t, \omega, a, \varphi). \phantom \int 
\end{align*}
Hence, we obtain
\[
\widehat{\cE}_{t} ( \varphi ) (\omega) \leq \widehat{\cE}_0 ( \varphi (\omega \, \overline{\otimes}_t \, X)  ) (0).
\]
We turn to the converse inequality. For \(a \in \mathcal{A}\), set
\[
b^a_r (X) := \begin{cases} 0, & r \in [0, t), \\ a_{r - t} ( X_{(\cdot + t)\wedge T} - X_t ), & r \in 
[t, T], \end{cases} 
\]
which is clearly also an element of \(\mathcal{A}\).
Now, we obtain that 
\begin{align*} 
\widehat{\cE}_0 ( \varphi (\omega \, \overline{\otimes}_t \, X)  ) (0) 
&= \sup_{a \in \mathcal{A}} \E^\W \Big[ e^{- \frac{1}{2} \int_0^T \|a_s\|^2 \, ds} \varphi \Big( \omega \, \overline{\otimes}_t \, X + \sqrt{p - 1}\int_0^{(\cdot \, - \, t) \vee 0} a_s \, ds \Big) \Big] 
\\&= \sup_{a \in \mathcal{A}} \E^\W \Big[ e^{- \frac{1}{2} \int_0^{T - t} \|a_s\|^2 \, ds} \varphi \Big( \omega \, \overline{\otimes}_t \, X + \sqrt{p - 1}\int_0^{(\cdot \, - \, t) \vee 0} a_s \, ds \Big) \Big] 
\\&= \sup_{a \in \mathcal{A}} \E^\W \Big[ e^{-\frac{1}{2} \int_t^T \|b^a_s (\omega\, \otimes_t\, X)\|^2 \, ds} \varphi \Big( \omega \otimes_t X + \sqrt{p - 1} \int_t^{\cdot \, \vee t} b^a_s (\omega \otimes_t X) \, ds \Big) \Big] 
\\&\leq \widehat{\cE}_t (\varphi) (\omega). \phantom \int
\end{align*} 
This establishes the remaining inequality, completing the proof.
\end{proof}

For \(t \in [0, T], x \in \bR^d\) and \(\varphi \in \textit{Lip}_b (\bR^d; \bR_+)\), set 
\[
\widehat{S}_t [\varphi] (x) := \widehat{\cE}_0 \big( \varphi (x + X_t) \big) (0), 
\]
where \(0\) refers to the zero path.

\begin{corollary} \label{coro: for proof}
	For all \(n \geq 2, 0 \leq t_1 < t_2 < \dots < t_n \leq T\) and \(\varphi \in \textit{Lip}_b (\bR^{nd}; \bR_+)\), it holds that 
	\[
	\widehat{\cE}_0 (\varphi (X_{t_1}, \dots, X_{t_n}) ) = \widehat{\cE}_0 \Big( \widehat{S}_{t_n - t_{n - 1}} [\varphi (x_1, \dots, x_{n - 1}, \cdot \,)] (x_{n - 1}) \, \big|_{x_1 \, =\, X_{t_1}, \dots, x_{n - 1} \, =\, X_{t_{n - 1}}} \Big).
	\]
\end{corollary}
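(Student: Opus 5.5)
The plan is to derive Corollary~\ref{coro: for proof} from the dynamic programming principle (Theorem~\ref{theo: semigroup}) at time $s:=t_{n-1}$, followed by the time-homogeneity of Lemma~\ref{lem: homoge}. Set $\Phi(\omega):=\varphi(\omega(t_1),\dots,\omega(t_n))$. Since $\varphi\in\textit{Lip}_b(\bR^{nd};\bR_+)$, the functional $\Phi$ is bounded, non-negative and Lipschitz for $\|\cdot\|_T$, so $\Phi\in\Li$. If $t_{n-1}=0$, then $t_1=0$ and $n=2$. In that case the statement reduces to Lemma~\ref{lem: homoge} with $t=0$, so we may assume $s=t_{n-1}>0$.

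By Theorem~\ref{theo: semigroup} with $t=0$ and $\omega=0$, we have $\widehat{\cE}_0(\Phi)(0)=\widehat{\cE}_0(\widehat{\cE}_s(\Phi))(0)$. Next we identify $\widehat{\cE}_s(\Phi)(\omega)$ using Lemma~\ref{lem: homoge}:
\[
\widehat{\cE}_s(\Phi)(\omega)=\widehat{\cE}_0\big(\Phi(\omega\,\overline{\otimes}_s\,X)\big)(0).
\]
For $i\le n-1$, $(\omega\,\overline{\otimes}_s X)(t_i)=\omega(t_i)$. Since $t_n>s$,
\[
(\omega\,\overline{\otimes}_s X)(t_n)=\omega(s)+X_{t_n-s}-X_0 .
\]
Under the expectation defining $\widehat{\cE}_0(\cdot)(0)$, i.e.\ $J(0,0,a,\cdot)$, the underlying path is $0\otimes_0 X=X-X_0$ shifted by the control. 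So $X_0$ is effectively replaced by $0$, and the shifted path at time $t_n-s$ is $X_{t_n-s}+\sqrt{p-1}\int_0^{t_n-s}a_r\,dr$. Hence
\[
\Phi(\omega\,\overline{\otimes}_s X)=\varphi\big(\omega(t_1),\dots,\omega(t_{n-1}),\omega(t_{n-1})+X_{t_n-s}\big),
\]
viewed as a function of the shifted path. Note that the shift does not alter the first $n-1$ coordinates, since those are frozen at $\omega$. Therefore
\[
\widehat{\cE}_s(\Phi)(\omega)=\widehat{S}_{t_n-t_{n-1}}\big[\varphi(x_1,\dots,x_{n-1},\cdot\,)\big](x_{n-1})\Big|_{x_i=\omega(t_i)},
\]
which is exactly the definition of $\widehat{S}$.

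The main care point is this bookkeeping of the concatenation operator. One must check that inside $\widehat{\cE}_0(\cdot)(0)$ the $X_0$ term disappears, because the path is $0\otimes_0X$. One must also check that the functional $\Phi(\omega\,\overline{\otimes}_s\,\cdot)$, as a function of the shifted path $\eta$, equals $\varphi(\omega(t_1),\dots,\omega(t_{n-1}),\omega(t_{n-1})+\eta(t_n-s)-\eta(0))$. This is Lipschitz and bounded, hence in $\Li$, so Lemma~\ref{lem: homoge} applies.

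Finally, Lemma~\ref{lem: Lip} gives $\widehat{\cE}_s(\Phi)\in\Li$, so the outer application of $\widehat{\cE}_0$ is legitimate. Substituting $\omega=X$ (evaluated at the controlled path, as in the definition of $\widehat{\cE}_0(\cdot)(0)$) yields the claimed identity.
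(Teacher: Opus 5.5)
Your proposal is correct and takes the same route as the paper. The paper's proof consists only of the remark that the claim follows from the dynamic programming principle (Theorem~\ref{theo: semigroup}) together with the time-homogeneity of Lemma~\ref{lem: homoge}; you fill in the details it leaves implicit. These are: applying the principle at $s=t_{n-1}$, checking that $X_0$ vanishes under $\widehat{\cE}_0(\cdot)(0)$, checking membership in $\Li$ via Lemma~\ref{lem: Lip}, and handling separately the degenerate case $t_{n-1}=0$, which Theorem~\ref{theo: semigroup} (requiring $s>t$) does not cover.
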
 

\begin{proof}
	The claim follows from Theorem~\ref{theo: semigroup} and Lemma~\ref{lem: homoge}.
\end{proof}

\section*{Declaration of AI use.}
ChatGPT was used to improve the exposition and to suggest refinements to some arguments in the applications. The main results and their proofs were developed by the authors. All AI-assisted material was verified by the authors, who take full responsibility for the content.

\end{document}